\newtheorem{theorem}{Theorem}[section]
\newtheorem{lemma}[theorem]{Lemma}
\newtheorem{proposition}[theorem]{Proposition}
\newtheorem{corollary}[theorem]{Corollary}
\theoremstyle{definition}
\newtheorem{definition}[theorem]{Definition}
\newtheorem{remark}[theorem]{Remark}
\numberwithin{equation}{section}
\author{Sergey Bezuglyi}
\address{Department of Mathematics, Institute for Low Temperature
Physics, Kharkiv 61103, Ukraine}
\curraddr{Department of Mathematics, University of Iowa, Iowa City,
52242 IA, USA}
\email{bezuglyi@gmail.com}
\author{Reem Yassawi}
\address{Department of Mathematics, Trent University, Peterborough, Canada}
\email{ryassawi@trentu.ca}
\keywords{Bratteli diagrams, Vershik maps}
\subjclass{Primary 37B10, Secondary 37A20}
\theoremstyle{definition}
\newtheorem{example}[theorem]{Example}
\newcommand{\Z}{\mathbb Z}
\newcommand{\N}{\mathbb N}
\newcommand{\om}{\omega}
\newcommand{\wt}{\widetilde}
\newcommand{\ol}{\overline}
\newcommand{\e}{\varepsilon}
\theoremstyle{remark}
\begin{document}

\title{ Orders that yield homeomorphisms on Bratteli diagrams}

\begin{abstract}
 We call an order $\om$ on a Bratteli diagram $B$ {\em perfect} if its Vershik map is a homeomorphism. In this paper we  study the set of orders on a diagram and find necessary and sufficient conditions for an order to be perfect, in particular  when the order has several extremal paths. This work generalizes previous results, obtained for finite rank Bratteli diagrams.
  We describe an explicit procedure to
 create perfect orderings on Bratteli diagrams based on the study of
 certain relations between the entries of the diagram's incidence matrices and properties of the
 associated graphs, with the latter relations  characterizing diagrams which support perfect orderings. Also, we apply our theory to give a new combinatorial proof of the fact that the dimension group of a diagram supporting perfect orderings with $k$ maximal paths has a copy of  $\Z^{k-1}$ contained in their infinitesimal subgroup. Under certain conditions, we show that a similar result holds if the diagram supports countably many maximal paths.
Our results are illustrated  by numerous examples.
\end{abstract}
\maketitle

\section{Introduction}

  A {\em Bratteli diagram} $B$ (Definition \ref{Definition_Bratteli_Diagram}) is an infinite graph which encodes how a space $X_B$ is to be cut up into arbitrarily small pieces, in order that a dynamics be defined on it, by shifting these pieces. The space $X_B$ is represented as the set of infinite paths on $B$ starting at a distinguished vertex, and the dynamics is given by an {\em order} $\om$ (Section \ref{order}) which describes how those pieces  are to be shifted, or {\em stacked}. The dynamics $\varphi_\om$, called a {\em Vershik map,}  consists of moving up the stack, and
$\varphi_\om$ is defined and continuous everywhere except at points which are always at the top of a tower. In this article we are concerned with characterising {\em perfect} orders, which are those where  $\varphi_\om$ extends to a homeomorphism. We recall the following crucial fact that emphasizes the importance of perfect orderings:  the class of {\em aperiodic} (Definition \ref{aperiodic_definition}) Bratteli diagrams with perfect orderings is in a one-to-one correspondence with the set of aperiodic homeomorphisms of a Cantor set \cite{med}.

This paper is a natural extension of our previous work \cite{bky}
that was devoted to the study of perfect orderings on Bratteli diagrams of
{\em finite} rank.   We  summarize in Section \ref{finite_rank_section}  the relevant results from \cite{bky}, and refer to the article for a more detailed analysis.
We restrict our attention to {\em regular} Bratteli diagrams (Definition \ref{regular_definition}).  For these diagrams, if $\om$  is any order on $B$ that admits a Vershik map on  $X_B$, then this map is unique. For any order $\om$ on a  regular diagram $B$, the sets $X_{\max} (\om)$ and $X_{\min} (\om)$ are closed and nowhere dense in the path set $X_B$.

Suppose that  the Vershik map $\varphi_\om$ can be extended to a homeomorphism. Then  the order's  extremal edge structure, called its {\em skeleton} (Definition \ref{infinite_skeleton_definition}), constrains
 its {\em languages}  (Definition \ref{language_definition}): words in this language must generate paths in certain directed graphs $\mathcal H_n$ ({Definition \ref{infinite_associated_directed_graphs_definition})
that are defined by the skeleton. The skeleton also defines the extension of $\varphi_\om$ to the set of maximal paths of $\om$.  This extension must satisfy the properties described in Definition  \ref{correspondence_definition}; we call orders which satisfy these properties a {\em correspondence}.
  Conversely, given a Bratteli diagram,  skeleton and a correspondence, our main result is that we characterise,  in Theorem  \ref{analogous condition}, which orders extend to homeomorphisms, using the fact that  their language must correspond to valid Eulerian paths in $\mathcal H_n$.

The last section contains a new proof  of  known results stated in \cite{gps} which, in turn, are based on the earlier paper \cite{putnam}: namely that if $G$ is the dimension group of a simple Bratteli diagram  $B$, such that $B$ supports a perfect order with exactly $j$ maximal paths and $j$ minimal paths, then the infinitesimal subgroup of $G$ contains a subgroup isomorphic to $\mathbb Z^{j-1}$. Our combinatorial proof uses extensively the machinery of skeletons and associated graphs that we have developed, as well as our characterization of  diagrams that support perfect orderings  in Theorem \ref{analogous condition}.  We also show that our proof can be extended to a work for a class of Bratteli diagrams that have countably many extremal paths, and believe that  an appropriate version
of these results holds for dimension groups of aperiodic diagrams. Here we mention recent results of \cite{hand}, where given a dimension group whose infinitesimal subgroup contains $\Z^k$, concrete (equal row and column sum) Bratteli diagram representations of these dimension groups are found. Some of the examples in \cite{hand} can be shown to satisfy the conditions of Theorem \ref{analogous condition}. It would be interesting to characterize the dimension groups of diagrams that support perfect orderings.

We end with a few remarks. We find it useful to include a number of examples in the text that will help the reader to understand the concepts and statements.
Our examples are mainly of simple diagrams, but the constructs extend to aperiodic diagrams in a similar fashion.
The words  ``order'' and ``ordering'' are mostly used as synonyms, although we often use the former for a specific order, and the latter for an arbitrary order chosen from $\mathcal O_B$.

\section{Definitions and notation}\label{Preliminaries}

\subsection{ Bratteli diagrams}

\begin{definition}\label{Definition_Bratteli_Diagram}
A {\it Bratteli diagram} is an infinite graph $B=(V,E)$ such that the vertex
set $V=\bigcup_{i\geq 0}V_i$ and the edge set $E=\bigcup_{i\geq 1}E_i$
are partitioned into disjoint subsets $V_i$ and $E_i$ such that

(i) $V_0=\{v_0\}$ is a single point;

(ii) $V_i$ and $E_i$ are finite sets;

(iii) there exist a range map $r$ and a source map $s$ from $E$ to
$V$ such that $r(E_i)= V_i$, $s(E_i)= V_{i-1}$, and
$s^{-1}(v)\neq\emptyset$, $r^{-1}(v')\neq\emptyset$ for all $v\in V$
and $v'\in V\setminus V_0$. \end{definition}

The pair $(V_i,E_i)$ or just $V_i$ is called the $i$-th level of the
diagram $B$. A finite or infinite sequence of edges $(e_i : e_i\in E_i)$
such that $r(e_{i})=s(e_{i+1})$ is called a {\it finite} or {\it infinite
path} respectively. We write $e(v,v')$ to denote a finite path $e = (e_1,...,e_k)$ such
that $s(e) = v ( = s(e_1))$ and $r(e)=v' ( = r(e_k))$, and let $E(v,v')$ denote the set of all such paths. For a Bratteli diagram $B$,
let $X_B$ denote the set of infinite paths starting at the top vertex $v_0$. We
endow $X_B$ with the topology generated by cylinder sets
$U(e_k,\ldots,e_n):=\{x\in X_B : x_i=e_i,\;i=k,\ldots,n\}$, where$(e_k,\ldots,e_n)$ is a finite path in $B$ from level $k$ to level $n$. Then $X_B$ is a
0-dimensional compact metric space with respect to this topology. {\em We  assume throughout the paper  that $X_B$ has no isolated points for all considered Bratteli diagrams $B$.}

Given a Bratteli diagram $B$, the $n$-th {\em incidence matrix}
$F_{n}=(f^{(n)}_{v,w}),\ n\geq 0,$ is a $|V_{n+1}|\times |V_n|$
matrix whose entries $f^{(n)}_{v,w}$ are equal to the number of
edges between the vertices $v\in V_{n+1}$ and $w\in V_{n}$, i.e.
$$
 f^{(n)}_{v,w} = |\{e\in E_{n+1} : r(e) = v, s(e) = w\}|.
$$
Observe that every vertex $v\in V $ is
connected to $v_0$ by a finite path and the set $E(v_0,v)$ of all such
paths is finite. Set $h_v^{(n)}=|E(v_0,v)|$ for $v\in V_{n}$. Then
\begin{equation*}
h_v^{(n+1)}=\sum_{w\in V_{n}}f_{v,w}^{(n)}h^{(n)}_w \ \mbox{or} \ \
h^{(n+1)}=F_{n}h^{(n)}
\end{equation*}
where $h^{(n)}=(h_w^{(n)})_{w\in V_n}$.

Next we define some popular families of Bratteli diagrams.
We say that $B$ is {\em simple} if  for any level
$n$ there is $m>n$ such that $E(v,w) \neq \emptyset$ for all $v\in
V_n$ and $w\in V_m$.
 We say $B$ is  \textit{stationary} if $F_n = F_1$  for all $n\geq 2$.
We say $B$ has \textit{finite rank} if  for some $k$, $|V_n| \leq k$ for all $n\geq 1$.
Let $B$ have finite rank. We say $B$ has \textit{rank $d$} if  $d$ is the smallest
integer such that $|V_n|=d$ infinitely often.
We say that $B$ has {\em strict rank d} if $|V_n|=d$ for each $n\geq 1,$ and if $|r^{-1}(v)| \geq 2$ for each $v\in V\backslash V_0$.  If $V$ has strict rank $d$, we will assume that for any $n$ the vertex set $V_n$ does not depend on $n$ and equals the set $V$ that has exactly $d$ vertices.
In this article we will consider general Bratteli diagrams, which are not necessarily of finite rank.

For a Bratteli diagram $B$, the {\it tail (cofinal) equivalence}  relation
$\mathcal E$ on the path space $X_B$ is defined as $x\mathcal Ey$ if
 $x_n=y_n$ for all $n$ sufficiently large, where $x = (x_n)$, $y= (y_n)$.
Let $X_{per}=\{x\in X_B : |[x]_{\mathcal E}| <\infty\}$, with $[x]_{\mathcal E}$ denoting the $\mathcal E$ equivalence class of $x$.  By definition,
we have $X_{per}=\{x\in X_B : \exists n>0 \mbox{ such that }(|r^{-1}(r(x_i))|=
1\ \forall i\geq n)\}$.
A Bratteli diagram $B$ is called
\textit{aperiodic} if $X_{per}=\emptyset$, i.e., every
$\mathcal  E$-orbit is countably infinite.
\textit{Throughout the paper, we only consider aperiodic Bratteli diagrams $B$.  For these diagrams $X_B$ is a Cantor set and $\mathcal E$ is a Borel
equivalence relation on $X_B$ with uncountably infinitely many equivalence classes.}

Let $B$ be a Bratteli diagram, and $n_0 = 0 <n_1<n_2 < \ldots$ be a strictly increasing sequence of integers. The {\em telescoping of $B$ to $(n_k)$} is the Bratteli diagram $B'$, whose $k$-level vertex set $V_k'= V_{n_k}$ and whose incidence matrices $(F_k')$ are defined by
\[F_k'= F_{n_{k+1}-1} \circ \ldots \circ F_{n_k},\]
where $(F_n)$ are the incidence matrices for $B$.
Thus when one telescopes $B$ one takes a subsequence of levels
$\{n_k\}$ and considers the set $E(n_k, n_{k+1})$
 of all finite paths between the
 levels $\{n_k\}$ and $\{n_{k+1}\}$ as edges of the new diagram.
In particular, a Bratteli diagram $B$ has rank $d$ if and only if there is
a telescoping $B'$ of $B$ such that $B'$ has exactly $d$ vertices
at each level.

Next we define  a family of aperiodic diagrams that have an incidence matrix structure that is useful for our purposes.

\begin{definition}\label{aperiodic_definition}
We define the family $\mathcal A$ of Bratteli diagrams, all of whose incidence matrices are of the form
\begin{equation} \label{aperiodic_matrix_form}F_{n}:=
\left(
\begin{array}{ccccc}
A_{n}^{(1)} & 0 & \ldots  &  0 & 0 \\
0 & A_{n}^{(2)} & \ldots  & 0 & 0 \\
\vdots & \vdots  & \ddots & \vdots & \vdots     \\
0 & 0 & \ldots  & A_{n}^{(k)}  & 0 \\
B_{n}^{(1)}& B_{n}^{(2)}  & \ldots  & B_{n}^{(k)} & C_{n}  \\
 \end{array}
\right)
\end{equation}
where for some sequences  $(d_{1}^{(n)})_n, \ldots ,(d_{k}^{(n)})_n$ and  $(d^{(n)})_n,$
\begin{enumerate}
\item
$A_{n}^{(i)}$ is a $d_{i}^{(n+1)}\times d_{i}^{(n)}$ matrix,
\item all  matrices $A_{n}^{(i)}$, $B_{n}^{(i)}$ and $C_{n}$ are strictly
 positive, and
\item $C_{n}$ is a $d^{(n+1)}
\times d^{(n)}$ matrix.
\end{enumerate}

\end{definition}

\subsection{Orders on a Bratteli diagram}\label{order}

A Bratteli diagram $B=(V,E) $ is called {\it ordered}
if a linear order `$>$' is defined on every set  $r^{-1}(v)$, $v\in
\bigcup_{n\ge 1} V_n$. We use  $\om$ to denote the corresponding partial
order on $E$ and write $(B,\om)$ when we consider $B$ with the ordering $\om$. Denote by $\mathcal O_{B}$ the set of all orderings on $B$.

Every $\omega \in \mathcal O_{B}$ defines the  \textit{lexicographic}
ordering on the set $E(k,l)$
 of finite paths between
vertices of levels $V_k$ and $V_l$:  $(e_{k+1},...,e_l) > (f_{k+1},...,f_l)$
if and only if there is $i$ with $k+1\le i\le l$, $e_j=f_j$ for $i<j\le l$
and $e_i> f_i$.
It follows that, given $\om \in \mathcal O_{B}$, any two paths from $E(v_0, v)$
are comparable with respect to the lexicographic ordering generated by $\om$.  If two infinite paths are tail equivalent, and agree from the vertex $v$ onwards, then we can compare them by comparing their initial segments in $E(v_0,v)$. Thus $\om$ defines a partial order on $X_B$, where two infinite paths are comparable if and only if they are tail equivalent.
We call a finite or infinite path $e=(e_i)$ \textit{ maximal (minimal)} if every
$e_i$ is maximal (minimal) amongst the edges
from $r^{-1}(r(e_i))$.

Notice that, for $v\in V_i,\ i\ge 1$, the
minimal and maximal (finite) paths in $E(v_0,v)$ are unique. Denote
by $X_{\max}(\om)$ and $X_{\min}(\om)$ the sets of all maximal and
minimal infinite paths in $X_B$, respectively. It is not hard to show that
$X_{\max}(\om)$ and $X_{\min}(\om)$ are \textit{non-empty closed subsets} of
$X_B$; in general, $X_{\max}(\om)$ and $X_{\min}(\om)$ may have
interior points. For a finite rank Bratteli diagram $B$, the sets $X_{\max}(\om)$
and $X_{\min}(\om)$ are always finite for any $\om$, and if $B$ has rank $d$,
then each of them have at most $d$ elements
(\cite{bkms}).
An  ordered Bratteli diagram $(B, \om)$ is called \textit{properly ordered}
if the sets $X_{\max}(\om)$ and $X_{\min}(\om)$ are singletons.
We denote by $\mathcal O_B(j)$ the set of all orders on $B$ which have $j$ maximal and $j$ minimal paths.
Thus, in our notation, $\mathcal O_B(1)$ is the set of proper orders on $B$.

Let $(B,\omega)$ be an ordered Bratteli diagram, and suppose that $B'=(V',E')$ is the telescoping of $B$ to levels $(n_k)$. Let $v' \in V'$ and suppose that the two edges $e_1',$ $e_2'$, both with range $v'$, correspond to the finite paths $e_1$, $e_2$ in $B$, both with range $v$. Define the order $\om'$ on $B'$ by $e_1'<e_2'$ if and only if $e_1<e_2$. Then $\om' $ is called the {\em lexicographic order generated by $\om$} and is denoted by $\om'=L(\om)$.
It is not hard to see that if $\om' = L(\om)$, then
$$
|X_{\max}(\om)| = |X_{\max}(\om')|,\ \ |X_{\min}(\om)| = |X_{\min}(\om')|.
$$
If $B$ is an aperiodic Bratteli diagram and $\om \in \mathcal O_B$, then $X_{\max}(\om)\cap
X_{\min}(\om) =\emptyset$.

\begin{definition}\label{regular_definition}
A Bratteli diagram $B $ is called \textit{ regular} if for any ordering
$\omega \in \mathcal O_{B}$ the sets $X_{\max}(\omega)$ and
$X_{\min}(\omega)$ have empty interior.
\end{definition}
In particular, finite rank Bratteli
 diagrams are regular, and if all incidence matrix entries of $B$ are at least 2, then $B$ is regular.
{\em In this article we assume that all diagrams are regular.}

\begin{definition}\label{VershikMap}
Let $(B, \omega)$ be an ordered, regular Bratteli
diagram.  We say that $\varphi = \varphi_\omega : X_B\rightarrow X_B$
is a {\it  Vershik map} if it satisfies the following conditions:

(i) $\varphi$ is a homeomorphism of the Cantor set $X_B$;

(ii) $\varphi(X_{\max}(\omega))=X_{\min}(\omega)$;

(iii) if an infinite path $x=(x_1,x_2,\ldots)$ is not in $X_{\max}(\omega)$,
then $\varphi(x_1,x_2,\ldots)=(x_1^0,\ldots,x_{k-1}^0,\overline
{x_k},x_{k+1},x_{k+2},\ldots)$, where $k=\min\{n\geq 1 : x_n\mbox{
is not maximal}\}$, $\overline{x_k}$ is the successor of $x_k$ in
$r^{-1}(r(x_k))$, and $(x_1^0,\ldots,x_{k-1}^0)$ is the minimal path
in $E(v_0,s(\overline{x_k}))$.
\end{definition}

If $\om$ is an ordering on $B$, then one can always define the map
$\varphi_0$ that maps $X_B \setminus X_{\max}(\om)$ onto $X_B
\setminus X_{\min}(\om)$ according to (iii) of Definition
\ref{VershikMap}. The question about the existence of the Vershik map is
equivalent to that of an extension of  $\varphi_0 : X_B \setminus
X_{\max}(\om) \to X_B \setminus X_{\min}(\om)$ to a homeomorphism
of the entire set $X_B$.  If $\om$ is a proper ordering, then
$\varphi_\om$ is a homeomorphism. For a finite rank Bratteli diagram $B$, the
situation is simpler than for a general Bratteli diagram because the sets
$X_{\max}(\om)$ and $X_{\min}(\om)$ are finite. Note that Vershik did not assume that his maps where homeomorphisms, as he was working in the measurable context.
We say that an ordering $\om\in \mathcal O_{B}$ is \textit{perfect}
if $\om$ admits a Vershik map $\varphi_{\om}$ on $X_B$.
Denote by  $\mathcal P_B $ the set of
all perfect orderings  on $B$.
We observe that for a regular Bratteli diagram with an ordering $\om$, the Vershik map $\varphi_\om$, if it exists, is
defined in a unique way. Also, a necessary condition for $\om\in
\mathcal P_{B}$ is that $|X_{\max}(\om)|=|X_{\min}(\om)|$. Given $(B,\om)$ with $\om \in \mathcal P_B$, the uniquely defined  system $(X_B, \varphi_\om)$ is called  a {\em Bratteli-Vershik} system.

\subsection{The languages of an ordered Bratteli diagram}

If $V$ is a finite alphabet, let $V^{+}$ denote the set of nonempty
words over $V.$ We use the notation $W' \subseteq W$ to indicate that $W'$ is a subword of $W$.
 If $W_1, W_2, \ldots ,W_n$, are words, then we let  $\prod_{i=1}^{n}W_i$ refer to their concatenation.

Let $\om$ be an order on a Bratteli diagram $B$.  Fix a vertex $v\in
V_n$ and some level $m<n$, consider the set $E(V_m,v) = \bigcup_{v'\in V_{m}}E(v',v)$ of all finite paths between vertices of level $m$ and $v$. This set can be ordered by $\om$: $E(V_m, v) = \{e_{1},\ldots e_{p}\}$ where $e_{i}<e_{i+1}$ for
$1\leq i\leq p-1$. Define the word $w(v,m,n):=s(e_{1})s(e_{2})\ldots
s(e_{p})$ over the alphabet $V_{m}$. If $W=v_{1}\ldots v_{r} \in
V_{n}^{+}$, let $w(W,n-1, n):= \prod_{i=1}^{r}w(v_{i},n-1,n).$

\begin{definition}\label{language_definition}
The {\em level-$n$ language $\mathcal
L(B,\om, n)$ of $(B,\om)$ }  is
$$\mathcal L(B,\om,n):=
\{W :  \,\, W\subset w(v,n,N), \,\, \mbox{ for
some } v\in V_{N},  N>n\}\, .
$$

\end{definition}

If $B$ has strict rank $d$, then each of the level-$n$ languages can be defined on a common alphabet $V$, and in this case we
recover our definition of {\em the} language  $\mathcal L(B,\om)$ that we introduced in \cite{bky}:
 $$ \mathcal L(B, \om):=\limsup_{n}\mathcal L(B,\om, n)\, .
$$

\section{Skeletons on Bratteli diagrams}\label{skeletons}

\subsection{Skeletons and associated graphs on finite rank Bratteli diagrams}\label{finite_rank_section}
In this section we review definitions and results from \cite{bky} on finite rank diagrams. We do this mainly to set the stage for generalizing these notions to nonfinite rank diagrams, in Section \ref{infinite_rank_skeletons_etc}. Thus our discussion of finite rank ordered diagrams will be concise. For more details on definitions and results, we refer the reader to Section 3 in \cite{bky}.

Suppose that  $B$ has strict rank $d$.   If a maximal
(minimal) path $M$ ($m$) goes through the same vertex $v_{M}$
($v_{m}$) at each level of $B$, we will call this path {\em
vertical}. The following proposition characterizes when $\om$ is
a perfect order on a finite rank Bratteli diagram, and was proved in
(\cite[Proposition 3.2, Lemma 3.3]{bky}) for finite rank diagrams.

\begin{proposition}  \label{existence_vershik_map}
Let $(B, \om)$ be an ordered Bratteli diagram.

\begin{enumerate}
\item Suppose that $B$ has strict rank $d$ and that the
  $\om$-maximal  and $\om$-minimal paths  $M_1,...,M_k$ and
      $m_1,...,m_{k'}$
are vertical passing through the vertices $v_{M_1},\ldots , v_{M_k}$ and $v_{m_1},\ldots , v_{m_{k'}}$ respectively.  Then $\om$ is perfect if and only if

\begin{enumerate}\item $ k = k'$,

\item there is a permutation $\sigma$ of $\{1,\ldots k\}$ such that for
each $i\in \{1,...,k\}$, $v_{M_i}v_{m_{j}} \, \in \mathcal L(B,\om)$ if and only if $j=\sigma(i)$.

\end{enumerate}
\item
  Let  $B'$ be  a telescoping of
 $B$. Then $\omega \in \mathcal P_{B}$ if and only if $\om' = L(\om)\in \mathcal P_{B'}$.

\end{enumerate}
\end{proposition}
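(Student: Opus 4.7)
The plan is to translate the topological property of perfection into the combinatorial language condition via verticality of the extremal paths, and then to observe that telescoping preserves the Vershik successor. For part (1), $(\Rightarrow)$, assume $\omega \in \mathcal P_B$ and let $\widehat{\varphi}_\omega$ denote the continuous extension, a homeomorphism that restricts to $\varphi_\omega: X_B \setminus X_{\max}(\omega) \to X_B \setminus X_{\min}(\omega)$. It therefore bijects $X_{\max}(\omega)$ onto $X_{\min}(\omega)$, giving $k = k'$ and a permutation $\sigma$ with $\widehat{\varphi}_\omega(M_i) = m_{\sigma(i)}$. For each large $N$, aperiodicity lets me pick $x^{(N)} \notin X_{\max}(\omega)$ that agrees with $M_i$ on the first $N$ edges and whose first non-maximal edge sits at some level $L = L(N) \to \infty$. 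By continuity, $\varphi_\omega(x^{(N)}) \to m_{\sigma(i)}$. The Vershik definition shows the first $L-1$ edges of $\varphi_\omega(x^{(N)})$ form the minimal path from $v_0$ to $s(\overline{x^{(N)}_L})$; these must equal $m_{\sigma(i)}$'s first $L-1$ edges, so verticality of $m_{\sigma(i)}$ pins $s(\overline{x^{(N)}_L}) = v_{m_{\sigma(i)}}$. Verticality of $M_i$ yields $s(x^{(N)}_L) = v_{M_i}$, so the consecutive edges $x^{(N)}_L, \overline{x^{(N)}_L}$ in $r^{-1}(r(x^{(N)}_L))$ produce the subword $v_{M_i} v_{m_{\sigma(i)}}$ of $w(r(x^{(N)}_L), L-1, L)$, whence $v_{M_i} v_{m_{\sigma(i)}} \in \mathcal L(B, \omega)$. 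Conversely, if $v_{M_i} v_{m_j} \in \mathcal L(B,\omega)$ for some $j \neq \sigma(i)$, the same correspondence produces a sequence $y^{(N)} \to M_i$ whose Vershik images accumulate on $m_j$, contradicting $\widehat{\varphi}_\omega(M_i) = m_{\sigma(i)}$.

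For $(\Leftarrow)$, set $\widehat{\varphi}_\omega(M_i) := m_{\sigma(i)}$; bijectivity on the extremes follows from $\sigma$ being a permutation. To verify continuity at $M_i$, take $x$ agreeing with $M_i$ up to a very deep level. The source $u := s(\overline{x_L})$ of the successor of the first non-maximal edge $x_L$ of $x$ gives $v_{M_i} u$ as consecutive letters in the word $w(r(x_L), L-1, L)$. A König-style argument shows that any $u$ appearing at infinitely many levels as such a successor source is the starting vertex of an infinite minimal path, hence lies in $\{v_{m_1}, \ldots, v_{m_k}\}$; the hypothesis then forces $u = v_{m_{\sigma(i)}}$ for all sufficiently deep $L$, which makes $\varphi_\omega(x)$ share a correspondingly long prefix with $m_{\sigma(i)}$. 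Part (2) is immediate: the canonical identification $X_B = X_{B'}$ is a homeomorphism respecting both lexicographic orders (edges of $B'$ are finite paths in $B$ ordered lexicographically by $\omega$, and cofinal paths are compared by their last differing edge in either diagram to the same effect), so $\varphi_\omega$ and $\varphi_{L(\omega)}$ coincide as partial maps on the shared path space and one extends continuously iff the other does.

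The main obstacle lies in the two-way translation between continuity of $\widehat{\varphi}_\omega$ near $M_i$ and the appearance of specific subwords $v_{M_i} v_{m_j}$ in the words $w(v, n, N)$: it requires careful tracking of which edges of the approximating paths are maximal, at what level the first non-maximal edge sits, and the source of its successor, with verticality providing the lever that cleanly identifies the relevant level vertices with the fixed labels $v_{M_i}$ and $v_{m_{\sigma(i)}}$. A particularly delicate point is ruling out, in the $(\Leftarrow)$ direction, sources drifting to vertices outside $\{v_{m_1}, \ldots, v_{m_k}\}$, which requires the König-style construction of an infinite minimal path from such a vertex to conclude that it must actually be one of the $v_{m_j}$.
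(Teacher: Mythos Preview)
The paper does not supply its own proof of this proposition; it is quoted from \cite[Proposition~3.2, Lemma~3.3]{bky}. Your argument follows the natural route and is essentially correct. One step, however, is asserted without justification and needs the same K\"onig idea you invoke on the minimal side.

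In both directions of part (1) you use that the two consecutive letters of $w(r(x_L),L-1,L)$ coming from $x_L$ and $\overline{x_L}$ are $v_{M_i}$ and some $u$; this presupposes $s(x_L)=v_{M_i}$. But if $L>n+1$ (where $x$ agrees with $M_i$ only on the first $n$ edges), the maximal prefix $x_1\cdots x_{L-1}$ could in principle have drifted away from $v_{M_i}$. The fix is a K\"onig argument on the \emph{maximal} side: the tree of finite maximal paths from $v_0$ is finitely branching with infinite branches exactly $M_1,\dots,M_k$ (which, being vertical through distinct vertices, already disagree at level~1), so there is a global bound $B$ beyond which every finite maximal path from $v_0$ is a prefix of some $M_i$. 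Hence for $L-1>B$ and $x_1=(M_i)_1$ one gets $s(x_L)=v_{M_i}$. The analogous bound $B'$ on the minimal side (which your K\"onig remark provides) forces $u=s(\overline{x_L})\in\{v_{m_1},\dots,v_{m_k}\}$. With both bounds in hand, $v_{M_i}u=v_{M_i}v_{m_j}$ really does appear in $\mathcal L(B,\omega,L-1)$ for all large $L$, and your language argument via hypothesis~(b) goes through in both directions. The same pair of bounds also justifies, in $(\Rightarrow)$, your claim that the first $L-1$ edges of $\varphi_\omega(x^{(N)})$ agree with $m_{\sigma(i)}$, which is not immediate merely from $\varphi_\omega(x^{(N)})\to m_{\sigma(i)}$.

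Part (2) is handled correctly.
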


We give an example of how one would apply  Proposition \ref{existence_vershik_map}. This example also   answers negatively the following question, that is related to Statement 2 of Proposition \ref{existence_vershik_map}. {\em Let $B$ be a Bratteli diagram and $B'$ a telescoping of $B$. Is it true that any perfect order on $B'$ is obtained by telescoping of a perfect order on $B$?}

\begin{example}\label{perfect order by telescoping}
We define a stationary Bratteli diagram $B$ such that for a telescoped diagram $B'$ there is a perfect order $\om'\in \mathcal P_{B'}$ satisfying the condition  $\om' \neq L(\om)$ for any perfect order $\om$ on $B$.

Let $B$ be a stationary Bratteli diagram defined on the set of four vertices $\{a,b,c,d\}$ by the incidence matrix
$$F =\left(
  \begin{array}{cccc}
    2 & 1 & 1 & 1 \\
    1 & 2 & 1 & 1 \\
    1 & 1 & 2 & 1 \\
    1 & 1 & 1 & 2 \\
  \end{array}
\right).
$$

Let $B'$ be the diagram obtained by telescoping $B$ to all odd levels; it has  incidence matrix
 \[F' = F^2= \left(
  \begin{array}{cccc}
    7 & 6 & 6 & 6 \\
    6 & 7 & 6 & 6 \\
    6 & 6 & 7 & 6 \\
    6 & 6 & 6 & 7 \\
  \end{array}
\right).\]
 In order to define a perfect order $\omega'$, we let, for each $n$,  $w(a,n,n+1)=(adbc)^6a$, $w(b,n,n+1)=(bcad)^6b$,  $w(c,n,n+1)=(cadb)^6c$, and, finally, for $r^{-1}(d)$ we set  $w(d,n,n+1)= bcad^7 (bca)^5$. This appearance of $d^7$ prevents $\om'$ from being a  lexicographical order on $B'$ generated by any choice of $\om$ on $B$.  On the other hand, using Proposition \ref{existence_vershik_map},
we can verify that $\om' \in \mathcal P_{B'}$.  For, the set of words of length 2 that belong to $\mathcal L (B', \om')$ are $\{ab, ad, bc, ca, db, dd \}$
and $\sigma:\{a,b,c\} \rightarrow \{a,b,c\}$ defined by $\sigma(a)=b,$ $\sigma(b)=c$ and $\sigma ( c ) = a$ satisfies  part (1) of Proposition \ref{existence_vershik_map}.
\end{example}

Let  $\om$ be an order on a  Bratteli diagram $B$.     If  $v\in V\backslash V_0  $, we denote the  minimal edge with range $v$ by  $\ol e_v$ , and  we denote the maximal edge with range $v$ by $\wt e_v$.
\begin{definition} \label{well_telescoped_definition}
Let $(B,\om)$ be an ordered  rank $d$ diagram. We say that $(B,\om)$ is {\em well telescoped} if
\begin{enumerate}
\item $B$ has strict rank $d$,
\item all $\om$-extremal paths are vertical, with $\wt V$,  $\ol V$ denoting the sets of vertices through which maximal and minimal paths run respectively, and
\item $s(\wt e_v)\in \wt V$ and $s(\ol e_v) \in \ol V$ for each $v\in V\backslash (V_0 \cup V_1 )$, and this is independent of $n$.
\end{enumerate}
If $(B,\om)$ is perfectly ordered, for it to be  considered well telescoped, it will also have to satisfy
\begin{enumerate}
\setcounter{enumi}{3}
\item if $\wt v\ol v $ appears as a subword of some $w(v,m,n)$ with $m\geq 1$, then , then $\sigma(\wt v) = \ol v$ defines a one-to-one correspondence between the sets $\wt V$ and $\ol V$.
\end{enumerate}

\end{definition}
Given an ordered finite rank $(B,\om)$, it can always be telescoped so that it is well telescoped.
For details of how this can be done, see Lemma 3.11 in \cite{bky}. Thus, when we talk about a (finite rank) ordered diagram, we assume without loss of generality  that it is well telescoped.
For well telescoped ordered diagrams $(B,\om)$, we have $s(\wt e_v) \in \wt V_n$ and $s(\ol e_v) \in \ol V_n$ for any $v\in V_{n+1},\ n \geq 1$.
Given a well telescoped $(B,\om)$, we call the set $\mathcal F_{\omega} = (\wt V, \ol V, \{\wt e_{v}, \ol e_{v}: v\in V_{n}, \,\, n\geq 2\})$ the {\em skeleton
associated to $\om$}. If  $\om$ is a perfect order on $B$,   it follows that $|\ol V |= |\wt V|$, and
if $\sigma:\wt V\rightarrow \ol V$ is the
permutation given by Proposition \ref{existence_vershik_map}, we call
$\sigma$ the {\em accompanying permutation}.

The notion of a skeleton of an ordered diagram can be extended to an unordered diagram. Namely, given a strict rank $d$ diagram $B$, we select, two subsets $\wt V$ and $\ol V$ of $V$, of the same cardinality, and, for each $v\in V\backslash V_0 \cup V_1$, we select two edges $\wt e_v$ and $\ol e_v$, both with range $v$, and such that  $s(\wt e_v) \in \wt V$,
$s(\ol e_v) \in \ol V$. In this way we can extend the definition of a skeleton
$\mathcal F = (\wt V, \ol V, \{\wt e_{v}, \ol e_{v}: v\in V, \})$ to an unordered strict rank $d$ Bratteli diagram, with the objective of creating well-telescoped orders.
 A  more detailed discussion  can be found in \cite{bky}).  Arbitrarily choosing a
bijection $\sigma:\wt V \rightarrow \ol V$, we can consider the set of orders on $B$ which have $\mathcal F$ as skeleton and $\sigma$ as accompanying permutation.

 Given a skeleton $\mathcal F$ on a  finite rank diagram $B$,
for any vertices $\wt v \in
\wt V$ and $\ol v \in \ol V$, we set
\begin{equation}\label{W} W_{\wt v} = \{w \in V : s(\wt e_w) =
\wt v\} \mbox{ and }  W'_{\ol v} = \{w \in V : s(\ol e_w) =
\ol v\}.
\end{equation}
 Then $W = \{W_{\wt v} : \wt v \in \wt V\}$ and $W' = \{W_{\wt v}' : \ol v \in \ol V\}$ are both partitions of $V$.  We call $W$ and $W'$ the {\em partitions generated by $\mathcal F$.}
 Let $[\ol v, \wt v]:= W'_{\ol v} \cap W_{\wt v}$, and
define the partition \[W \cap W' := \{[\ol v,\wt v] : \ol v\in \ol V, \wt v \in \wt V, [\ol v,\wt v]\neq \emptyset\} .\]
Let $\mathcal F$  be a skeleton on
the strict finite rank  $B$ with
accompanying permutation $\sigma$.
Let  $\mathcal H=(T,P)$ be the directed graph where
the set $T$ of vertices of  $\mathcal H$  consists of partition elements $[\ol v, \wt v]$ of $W'\cap W$, and where there is an edge in  $P$ from $[\ol v, \wt v]$ to $[\ol v', \wt v']$
if and only if $\ol v' \, =\sigma(\wt v)$.
We call $\mathcal H$ the {\em  directed graph associated to $(B, \mathcal F, \sigma)$}.

\begin{example}\label{graph for two extremal paths} Suppose that $\wt V = \ol V = \{a,b\}$, $\sigma (a) = a$, $\sigma (b) = b$, and the set of vertices of $\mathcal H$ is $[a,a], [a,b], [b,a], [b,b]$.  Then $\mathcal H$ is illustrated in Figure \ref{first graph}.  Note that we do not specify a skeleton here. In general, it is  possible that for some skeletons one  of the vertices $[b,a]$ or $[a,b]$ be degenerate, for example, if $W_{a}'\cap W_{b}=\emptyset$, then the  vertex $[a,b]$ is not present in $\mathcal H$. If the permutation is $\sigma(a)=b$, $\sigma(b) = a$, then  $\mathcal H$ is identical to that of Figure \ref{first graph}, except that the vertices $\{[a,a], \,  [b,b], \,  [a,b], \,  [b,a]\}$ of the new graph are relabelled $\{[b,a],\, [a,b], \,[b,b],\, [a,a]\}$ respectively.
\end{example}

\begin{figure}[h]
\centerline{\includegraphics[scale=1.1]{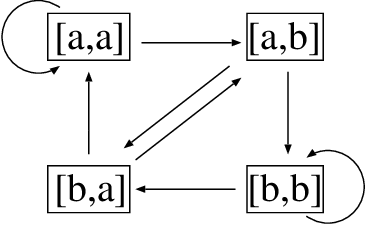}}
\caption{The associated graph $\mathcal H$ for $\om \in \mathcal P_{B}(2),$ $\sigma(a) = a$, $\sigma(b) = b$.
  \label{first graph}}
\end{figure}

Suppose that the strict finite rank $B$ has  skeleton  $\mathcal F$ and accompanying permutation $\sigma$. Then any path in $\mathcal H$ corresponds to a family of words in $V^+$: for  if $p_{1}
p_{2} \ldots p_{k}$ is a path in $\mathcal H$ where $p_{i}$ has source $[\ol v_i, \wt v_i]$, and for each $i$, $v_i$ is any vertex in $  [\ol v_i, \wt v_i]$,  then the path $p_{1}
p_{2} \ldots p_{k}$ corresponds to the word $v_1 v_2 \ldots v_k$ (and many such words can exist).
Conversely, if the word $v_1 v_2 \ldots v_k$ is such that $v_i \in [\ol v_i, \wt v_i]$ for each $i$, and
$[\ol v_1, \wt v_1]\, [\ol v_2, \wt v_2] \ldots [\ol v_k, \wt v_k]$ is a path in $\mathcal H$, then we say that $v_1v_2 \ldots v_k$ {\em corresponds to a valid path in $\mathcal H$}.
The relevance of $\mathcal H$ for perfect orders is described by the following lemma,
which  was proved  in \cite[Remark 3.16, Lemma 3.17]{bky}.

\begin{lemma}\label{graphs_determine_perfection}
 Let B be a
 strict  finite rank Bratteli diagram, $\mathcal F$ be a skeleton on $B$ and $\sigma : \wt V\rightarrow  \ol V$ be a bijection. Let $\mathcal H$ be the associated directed graph. Suppose that the ordering $\om$ on $B$ has skeleton and accompanying permutation $(\mathcal F, \sigma)$, and  such  that each  word in $\mathcal L(B, \om)$
corresponds to a valid path in $\mathcal H$. Then $\om$ is perfect. Conversely, if $(B,\om)$ is a well telescoped, perfectly ordered diagram with skeleton and accompanying permutation $(\mathcal F, \sigma)$, then every word in $\mathcal L(B, \om)$ corresponds to a valid path in $\mathcal H$.
\end{lemma}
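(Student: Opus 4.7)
The plan is to reduce both directions to the characterization of Proposition~\ref{existence_vershik_map}: $\om$ is perfect if and only if $|\wt V| = |\ol V|$ and $v_{M_i}v_{m_j} \in \mathcal L(B,\om)$ exactly when $j = \sigma(i)$. The cardinality equality is automatic from $\sigma$ being a bijection, so both implications reduce to controlling which length-two words $\wt v\,\ol v'$ (with $\wt v \in \wt V$ and $\ol v' \in \ol V$) belong to $\mathcal L$.

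For the necessity direction, my plan is to show that any consecutive pair $v_j v_{j+1}$ in some $w(v,m,N)$ ``lifts'' to a consecutive pair in $\wt V \times \ol V$ one level below. Concretely, $v_j v_{j+1}$ comes from two consecutive paths $e_j < e_{j+1}$ in $E(V_m, v)$; I would prepend to $e_j$ the maximal edge into $v_j$ and to $e_{j+1}$ the minimal edge into $v_{j+1}$. The resulting paths in $E(V_{m-1}, v)$ remain consecutive in the lex order, because any path strictly between them would have to agree with one of them above the highest differing level $j^{*}$ and then exceed a maximal, or fall below a minimal, segment. Hence $\wt{v_j}\,\ol{v_{j+1}} \in \mathcal L(B,\om,m-1)$, and the well-telescoping assumption gives $\wt{v_j} = s(\wt e_{v_j}) \in \wt V$ and $\ol{v_{j+1}} = s(\ol e_{v_{j+1}}) \in \ol V$. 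Proposition~\ref{existence_vershik_map} then forces $\ol{v_{j+1}} = \sigma(\wt{v_j})$, which is precisely the edge condition defining $\mathcal H$.

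For the sufficiency direction, the implication $v_{M_i}v_{m_j} \in \mathcal L \Rightarrow j = \sigma(i)$ is immediate from the valid path hypothesis, since verticality gives $s(\wt e_{v_{M_i}}) = v_{M_i}$ and $s(\ol e_{v_{m_j}}) = v_{m_j}$. For the reverse implication I would exhibit $v_{M_i}v_{m_{\sigma(i)}}$ in $\mathcal L$ using the decomposition $w(v,m,N) = \prod_{j=1}^{q} w(s(f_j),m,N-1)$, where $r^{-1}(v) \cap E_N = \{f_1 < \cdots < f_q\}$. The last letter of $w(s(f_j),m,N-1)$ is obtained by iterating $u \mapsto s(\wt e_u)$ starting from $s(f_j)$, and the first letter of $w(s(f_{j+1}),m,N-1)$ by iterating $u \mapsto s(\ol e_u)$. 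By the well-telescoping assumption these maps send $V$ into $\wt V$, respectively $\ol V$, after one step, so for $N > m+1$ each inter-sub-word transition produces a letter in $\wt V$ followed by one in $\ol V$. Choosing $v \in V_N$ and a non-maximal edge $f_j$ whose source iterates to $v_{M_i}$ under $u \mapsto s(\wt e_u)$---feasible by positivity of the incidence matrices after telescoping---one obtains a subword $v_{M_i}\,u \in \mathcal L$ with $u \in \ol V$, and the valid path hypothesis forces $u = \sigma(v_{M_i}) = v_{m_{\sigma(i)}}$.

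The chief technical point is this last construction. When $|\wt V| = 1$ every inter-sub-word transition already realizes the unique pair $v_{M_1}v_{m_1}$ and no further arrangement is needed. When $|\wt V| > 1$ I would select $v \in V_N$ with $\wt v \neq v_{M_i}$, which ensures that any edge into $v$ whose source lies in the basin of $v_{M_i}$ under $u \mapsto s(\wt e_u)$ is distinct from $\wt e_v$, and therefore non-maximal.
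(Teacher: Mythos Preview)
Your approach is sound, and note that the paper does not itself prove this lemma --- it is quoted from \cite{bky}. The paper \emph{does} prove the general-rank analogue, Proposition~\ref{perfect_order_characterisation}, and comparison with that is instructive.

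For necessity (perfect $\Rightarrow$ valid paths), your lifting argument is exactly the paper's: extend $e_j, e_{j+1}$ by the maximal/minimal edge one level down to witness $\wt{v_j}\,\ol{v_{j+1}} \in \mathcal L(B,\om,m-1)$, then read off $\ol{v_{j+1}} = \sigma(\wt{v_j})$ from the definition of the accompanying permutation.

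For sufficiency your route genuinely differs. The paper proves continuity of $\varphi_\om$ directly: for non-maximal $x_n \to M$, the valid-path hypothesis forces $v_k(\varphi_\om(x_n)) \in \sigma_k(v_k(M))$ whenever $x_n$ agrees with $M$ past level $k$, and the correspondence properties give convergence to $\sigma(M)$. You instead reduce to Proposition~\ref{existence_vershik_map}, which is tidy in finite rank but costs the extra step of exhibiting $v_{M_i}v_{m_{\sigma(i)}} \in \mathcal L(B,\om)$.

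One point to tighten: you justify finding the required non-maximal $f_j$ by ``positivity of the incidence matrices after telescoping,'' but the lemma does not assume $B$ is simple, so positivity is unavailable. A fix that uses only regularity: for each level $m$, choose (by empty interior of $X_{\max}$) a non-maximal path $x$ agreeing with $M_i$ through level $m+1$. Then $v_m(x)\,v_m(\varphi_\om(x)) = v_{M_i}\,u_m \in \mathcal L(B,\om,m)$ with $u_m \in \ol V$, since the transition is deep. Finiteness of $\ol V$ forces some $u \in \ol V$ to recur for infinitely many $m$, so $v_{M_i}\,u \in \mathcal L(B,\om)$, and your valid-path hypothesis then pins $u = v_{m_{\sigma(i)}}$ as intended.
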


\subsection{Skeletons, associated graphs, and correspondences on general Bratteli diagrams}\label{infinite_rank_skeletons_etc}
If $B $ is not of finite rank, the notion of a skeleton can be
generalized, although the notation is more technical.

\begin{definition}
Suppose that  $(B, \om)$ is an ordered Bratteli diagram.
To each maximal path $M$  and minimal path $m$ we
associate the sequences $(v_{n}(M))$ and $(v_{n}(m))$ of vertices that $M$  and $m$ pass
through. For each $n$, let
$\wt V_{n}:=\{v\in V_{n}: v = v_{n}(M) \mbox{ for some maximal path } M\}
$; we call vertices in $\wt V_{n}$ {\em maximal vertices}. Similarly we can define $\ol V_{n}$, the set of {\em minimal vertices} in $V_{n}$.
\end{definition}

 In other words,
for each $\wt v \in \wt V_{n}$,   there is at least  one  infinite maximal  path passing
through $\wt v$, and for each $\ol v \in \ol V_n$, there is at least one infinite minimal path passing through $\ol v$.
The following lemma tells us that the notion of a well telescoped ordered Bratteli diagram can be extended to general Bratteli diagrams:

\begin{proposition}\label{infinite_rank_skeleton}
Let $(B, \om)$ be an ordered aperiodic Bratteli diagram. Then there exists
a telescoping $(B', \omega')=((V',E'), \omega')$ of $(B,\omega)$ to a sequence of levels $(n_k)$,  such that
\begin{itemize}
\item
for every vertex $v\in V'$,  any
maximal edge $\wt e_{v} \in E_{k}'$ has  source
in  $\wt V_{k-1}'$ and any minimal edge $\ol e_v$ has source in $\ol V_{k-1}'$, and
\item all of $B$'s incidence matrix entries  that are nonzero are at least two.
\end{itemize}
\end{proposition}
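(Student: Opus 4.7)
The plan is to exploit the uniqueness of the maximal/minimal edge into each vertex in order to produce two rooted tree structures on $V$, and then telescope past the finite branches in each. For each $v \in V \setminus \{v_0\}$, let $\wt e_v$ and $\ol e_v$ denote the unique maximal and minimal edges with range $v$, and set $\wt\pi(v) := s(\wt e_v)$, $\ol\pi(v) := s(\ol e_v)$. This gives two rooted trees $T_{\max}$ and $T_{\min}$ on $V$ with root $v_0$: in $T_{\max}$, the parent of $v$ is $\wt\pi(v)$, and the children of $s \in V_{n-1}$ are all $v \in V_n$ with $\wt\pi(v) = s$. Since each $V_n$ is finite, $T_{\max}$ is finitely branching, so by K\"onig's lemma the subtree of $T_{\max}$ rooted at $s \in V_n$ is infinite if and only if $s$ lies on an infinite descending $T_{\max}$-path; concatenating such a path with the unique finite max path from $v_0$ to $s$ then produces an infinite maximal path through $s$. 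Hence $s \in \wt V_n$ if and only if its $T_{\max}$-descendant subtree is infinite, and the analogous equivalence holds for $\ol V_n$ via $T_{\min}$.

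I then choose the telescoping levels inductively. Set $n_0 = 0$, and given $n_{k-1}$, observe that each $s \in V_{n_{k-1}} \setminus \wt V_{n_{k-1}}$ has a finite $T_{\max}$-descendant subtree whose vertices all lie at levels at most some $L_{\max}(s) < \infty$; symmetrically, each $t \in V_{n_{k-1}} \setminus \ol V_{n_{k-1}}$ has $T_{\min}$-descendants only at levels at most $L_{\min}(t) < \infty$. Because $V_{n_{k-1}}$ is finite, I may pick
\[
n_k \;>\; \max\!\bigl(\{L_{\max}(s) : s \in V_{n_{k-1}} \setminus \wt V_{n_{k-1}}\} \cup \{L_{\min}(t) : t \in V_{n_{k-1}} \setminus \ol V_{n_{k-1}}\} \cup \{n_{k-1}\}\bigr).
\]

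It remains to verify the conclusion for the telescoped diagram $B'=(V',E')$ with levels $V'_k = V_{n_k}$. A maximal edge in $E'_k$ with range $v \in V'_k$ corresponds to the lexicographically maximal finite path in $B$ from $V_{n_{k-1}}$ to $v$, which is precisely the $T_{\max}$-path joining the $T_{\max}$-ancestor $s_v$ of $v$ at level $n_{k-1}$ to $v$; in particular the source of this max edge in $B'$ is $s_v$. If $s_v$ lay outside $\wt V_{n_{k-1}}$, then since $v$ is a $T_{\max}$-descendant of $s_v$ at level $n_k$, we would have $n_k \leq L_{\max}(s_v)$, contradicting the choice of $n_k$. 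Hence $s_v \in \wt V_{n_{k-1}} = \wt V'_{k-1}$, the last equality holding because infinite maximal paths in $B$ restrict to infinite maximal paths in $B'$ and conversely. The argument for minimal edges is identical after replacing $T_{\max}, \wt V, L_{\max}$ by $T_{\min}, \ol V, L_{\min}$. The only subtle step is the K\"onig's-lemma identification of $\wt V_n$ with the set of $T_{\max}$-roots of infinite subtrees; once this is in hand, the telescoping prescription is forced by the finiteness of each level.
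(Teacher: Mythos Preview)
Your proof is correct and follows essentially the same route as the paper's own argument: both observe that a vertex $s \in V_{n_{k-1}} \setminus \wt V_{n_{k-1}}$ can only be the source of finitely long maximal paths, use the finiteness of $V_{n_{k-1}}$ to pick $n_k$ beyond all of these, and iterate. The paper phrases the finiteness step in terms of ``extendable'' maximal edges rather than the tree $T_{\max}$ and K\"onig's lemma, and treats the minimal case by a separate identical argument, whereas you handle both simultaneously in the choice of $n_k$; these are cosmetic differences only.
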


\begin{proof} We use the idea of the proof of Proposition 2.8 from \cite{hps}.
Take $n_1=1$, write $V_{1} = \wt V_1 \cup (V_{1}\backslash \wt
V_1)$. Take a vertex $v \in V_{1}\backslash \wt V_1$ and consider all
maximal edges $e$ such that $s(e) = v$. We say that  a maximal edge $e$ is
{\em extendable} if there is a maximal  edge $e'$ such that $r(e) =
s(e')$. Let $E_v$ be the set of all finite maximal paths consisting of extendable edges starting from $v$. The set $E_v$ is  finite, and this is true for any $v \in V_{1}\backslash \wt V_1$.
 Therefore we can  find $n_2$ such that if $f$ is a maximal
  finite path with source in $V_{1}\backslash \wt V_1$, then $r(f)\in
  V_{n}$ with $n<n_{2}$. Thus, all maximal paths with source in $V_{1}$
  and range in $V_{n_2}$ must have source in $ {\wt V}_{1}$. We
  describe only the next step, the rest following by induction. Write
  $V_{n_2} = \wt V_{n_2} \cup (V_{n_2}\backslash \wt V_{n_2})$. Find
  an $n_3$ such that if $f$ is a maximal finite path with source in
  $V_{n_2}\backslash \wt V_{n_2}$, then $r(f)\in V_{n}$ with
  $n<n_{3}$. Therefore all maximal paths with source in $V_{n_2}$ and range
  in $V_{n_3}$ must have source in ${\wt V_{n_2}}$. Continue, and
  telescope $(B,\om)$ via levels $(n_{k})$. Since maximal edges in
  $E_{k}'$ in the telescoped diagram $(B', \om')$ correspond to
  maximal paths between $V_{n_{k-1}}$ and $V_{n_{k}}$ in $B$, the
  result follows. An identical argument yields the result for
  minimal edges.
\end{proof}

\begin{definition}
Let $(B, \om)$ be an ordered Bratteli diagram, where the sequences $(\wt V_n)$ and $(\ol V_n)$ consist of the maximal and minimal vertices, respectively. Suppose that for each  $n\geq 2 $ and each $v \in V_{n+1}$, $s(\wt e_v ) \in \wt V_n$ and $s(\ol e_v ) \in \ol V_n$. Suppose also that all of $B$'s nonzero incidence matrix  entries are at least two. Then we say that  $(B, \om)$ is {\em well telescoped}.
\end{definition}

 Proposition \ref{infinite_rank_skeleton} tells us that
we can assume that $(B,\om)$ is well telescoped.
Next we generalize the notion of a skeleton to an unordered, non-finite rank Bratteli diagram.
Let $B$ be a Bratteli diagram, where we assume that all nonzero entries of its incidence matrices are at least two.

\begin{definition}\label{infinite_skeleton_definition}
Let $\{M_\alpha: \alpha \in I\}$, and $\{m_\beta : \beta \in J\}, |I| = |J|$, be closed, nowhere dense sets of  infinite paths.
For $n \in \mathbb N$, let\[\wt V_n := \{\wt v :    \wt v = v_n(M_\alpha) \mbox{ for some }  \alpha \in  I  \} \mbox{ and  } \ol V_n := \{\ol v : \ol v = v_n(m_\beta) \mbox{ for some }  \beta \in J\}.\]
 Let
$\{\wt e_{v} : v \in V_{n+1} \}$ and
$\{\ol e_{v} : v \in V_{n+1}    \}$
be sets of edges such that
$r(\wt e_v) =r (\ol e_v) = v$,
$s(\wt e_v ) \in \wt V_n $ and $s(\ol e_v ) \in \ol V_n$.
Suppose that for any $n$ and any $N> n$,
\begin{enumerate}
\item
if $v_N(M_\alpha)= v_N(M_\alpha'),$ then $v_n (M_\alpha)= v_n(M_\alpha')$, with the analogous condition holding  for paths $m_\beta$ and $m_{\beta'}$,
\item if $v\in \wt V_{n}\cap \ol V_{n}$, then $\wt e_v \neq \ol e_v$, and \item  if $n \in \mathbb N$, $\alpha \in I$ and $\beta \in J$, then $M_\alpha \in U(\wt e_v)$ whenever $v= v_n(  M_\alpha  )$ and $m_{\beta} \in    U(\ol e_v ) $ whenever $v= v_n (m_\beta)$. 
\end{enumerate}
Then we call $\mathcal F = (\wt V_{n-1}, \ol V_{n-1}, \{\wt e_{v}, \ol e_{v}: v\in V_{n}\}: n \geq 2)$ the {\em skeleton associated to  $\{M_\alpha: \alpha \in I\}$ and $\{m_\beta : \beta \in J\}$.}
Vertices in the sets $\wt V_n$ and $\ol V_n$ are called {\em maximal} and {\em minimal} vertices respectively.
Paths $M_\alpha, \;\alpha \in I,$ are called {\em maximal} and form the set $X_{\max}(\mathcal F)$, and paths $m_\beta, \;\beta \in J,$ are called {\em minimal} and form the set $X_{\min}(\mathcal F)$.
\end{definition}

\begin{remark} We note the following:
\begin{itemize}
\item
If we are given a well telescoped ordered diagram $(B,\om)$ and $|X_{\max} (\om)|=| X_{\min} (\om)|$, then we can define the skeleton generated by $\om$, by letting
 $\{M_\alpha: \alpha \in I\} = X_{\max} (\om)$, $\{m_\beta : \beta \in J\}=X_{\min} (\om),$
 and choosing for each $v \in V\backslash V_0\cup V_1$,   $\wt e_v, \ol e_v $ to be the maximal and minimal edges with range $v$ respectively.
 It is clear that if $\mathcal F$ is defined by such an  order $\om$, then $X_{\max} (\mathcal F) = X_{\max} (\om)$ and $X_{\min} (\mathcal F) = X_{\min} (\om)$.
 \item
 We will not be  concerned with orders $\om$ such that $|X_{\max} (\om)|\neq| X_{\min} (\om)|$, as our aim is to characterize perfect orders and such a condition would prevent an order from being perfect.
 \item
 The requirement that $X_{\max} (\mathcal F)$ and $X_{\min} (\mathcal F)$ be closed is  natural: after all, we are introducing skeletons to build  orders, and  in that case the latter sets must be closed.
 \item
 The regularity of $B$ leads to the requirement that the sets $X_{\max}(\om)$ and $X_{\min}(\om)$   are nowhere dense.
\end{itemize}
\end{remark}

\begin{example}\label{basic_infinite_rank_example}
Suppose that $V_0=\{v_0 \}$, and for $n\geq 1$, $V_{n}=\wt{V}_n= \ol {V}_n= \{v_{1}, \ldots v_{n}\}$, and all nonzero incidence matrix entries are at least two.
Suppose also that  for $v_i \in V_{n+1}$, we choose $\wt e_i \neq \ol e_i$ and define
\[ s(\wt e_{v_{i}})
   = \left\{
\begin{array}{rl}
v_i
& \mbox{if $i \neq n+1$
   } \\
v_n & \mbox{ if $i=n+1$}
  \end{array}
  \right.
, \mbox{ and } \,\,\,s(\ol e_{v_{i}})
   = \left\{
\begin{array}{rl}
v_i
& \mbox{if $i \neq n+1$
   } \\
v_1& \mbox{ if $i=n+1$}
  \end{array}
  \right. \, . \]
 In Figure \ref{skeleton_example} we have drawn (only) the extremal edges in $B$,     with dashed lines represented maximal edges and solid lines representing minimal edges.
Consider the sets of infinite paths   $ \{  M_\alpha: \alpha \in \mathbb N \cup\{\infty \}\}  $ whose edges consist of the identified maximal edges, so that
$M_1$ passes  vertically through the vertex $v_1$ at all levels, and for $i>1$,
$M_i$ passes through vertices $v_1$, $v_2, \ldots v_{i-1}, v_i$,  and then goes down vertically through $v_i$. Finally, $M_\infty$ passes through vertices $v_1, v_2, v_3, \ldots$. Similarly consider the set of infinite paths $\{m_\beta : \beta \in \mathbb N\}$ whose edges consist of the identified minimal edges, so that
$m_1$ passes vertically through $v_1$, and for $i>1$,
$m_i$ passes through $v_1$ exactly $i-1$ times, then jumps to $v_i$ and goes down vertically through $v_i$. It is straightforward to verify that the sets  $ \{  M_\alpha: \alpha \in \mathbb N \cup\{\infty \}     \}  $ and $\{m_\beta : \beta \in \mathbb N\}$
 are both countable, closed, and nowhere dense, and that $\mathcal F = (\wt V_{n-1}, \ol V_{n-1}, \{\wt e_{v}, \ol e_{v}: v\in V_{n}\}: n \geq 2)$ is the  skeleton associated to  $\{M_\alpha: \alpha \in \mathbb N \cup \{ \infty \}\}$ and $\{m_\beta : \beta \in \mathbb N\}$.
  \end{example}

  \begin{figure}

 \centerline{\includegraphics[scale=0.7]{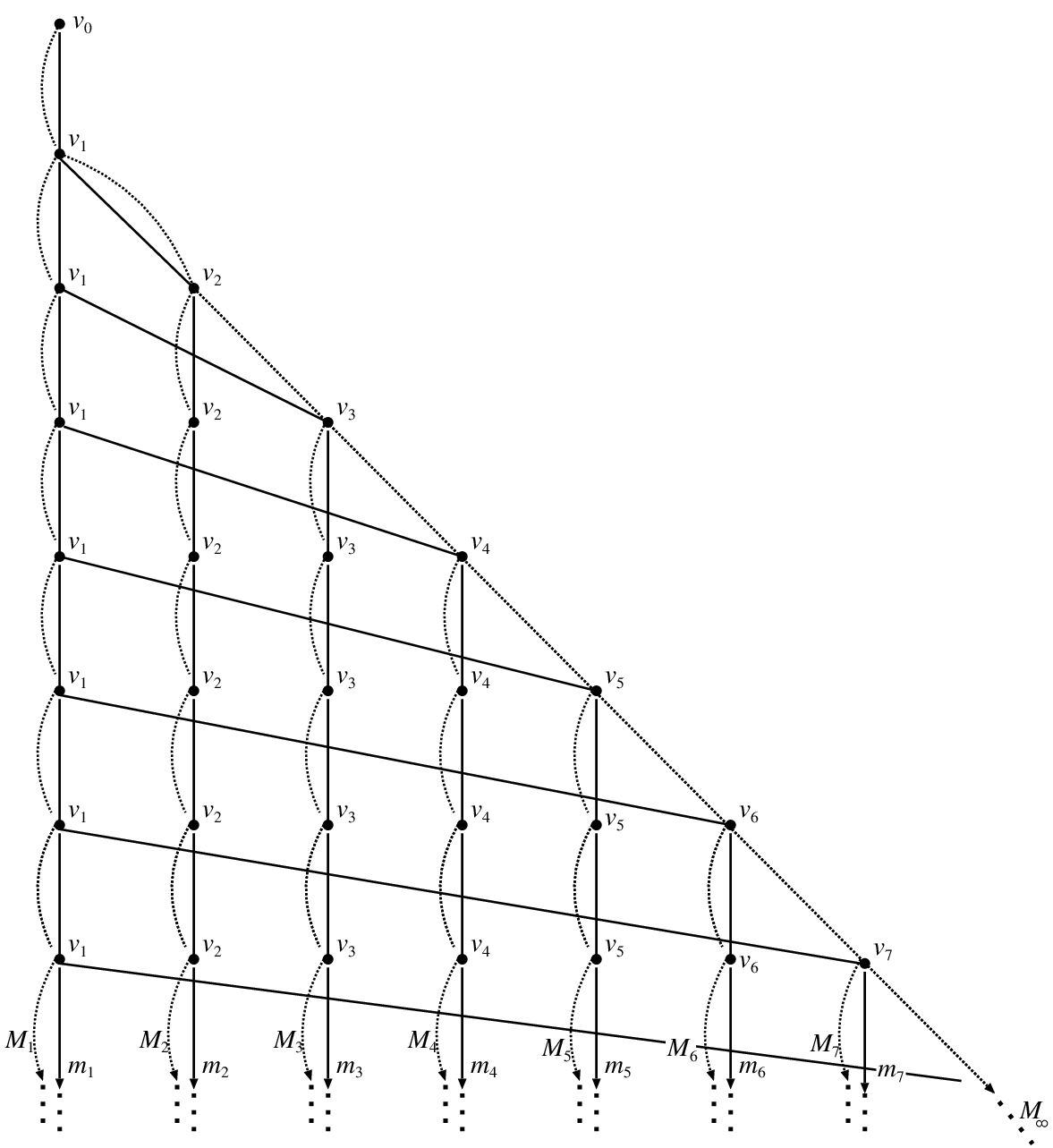}}
   \caption{The minimal and maximal edge structure for Example \ref{basic_infinite_rank_example}: solid lines are minimal edges, dashed lines are maximal.}
   \label{skeleton_example}
  \end{figure}

\begin{example}\label{basic_infinite_rank_example_uncountable}
Suppose that for $n\geq 1$, $V_{n}=\wt{V}_n= \ol {V}_n= \{v_{1}, \ldots , v_{2^n}\}$, and all incidence matrix entries are at least two. We label each vertex $v_i\in V_n$ using a binary string $(x_1, \ldots x_n)$ of length $n$ which denotes $i'$s binary expansion, starting with the least significant digit.
For example, we label vertex $v_5 \in V_4$ with the string $(1010)$. Suppose that  for $v_i \in V_{n+1}$,
\[ s(\wt e_{  (x_1, \ldots , x_{n+1})   })   = s(\ol e_{   (x_1, \ldots , x_{n+1})    }) =(x_1, \ldots , x_n). \]
In this case  the sets  $ \{  M_\alpha: \alpha \in \{0,1\}^{\mathbb N}    \}  $
and $ \{  m_\alpha: \alpha \in \{0,1\}^{\mathbb N}    \}  $ are uncountable.
\end{example}

Note that the previous  examples illustrate the fact that when defining a skeleton, we do not need complete information about the Bratteli diagram $B$. In particular, at this point, we need to know very little about the incidence matrices $(F_n)$ of $B$. The skeleton $\mathcal F$ is simply a constrained set of choices for all extremal edges when building an order.

Next we  discuss a way of building a homeomorphism $\sigma:X_{\max}(\mathcal F) \rightarrow X_{\min}(\mathcal F)$ which is amenable to being extended to a Vershik map. This will be the analogue of the permutation associated to a skeleton in the finite rank case.

Suppose that $\om$ is an order on $B$. Let $\sigma_{n}: \wt V_{n}\rightarrow 2^{ \ol  {V_{n}}}$ be defined by $\ol v\in \sigma_{n}(\wt v)$ if and only if
$\wt v \ol v \in \mathcal L(B, \om ,n)$.
 If $\om$ is perfect, then
  for any  sequence $(\wt v_n) = ( v_n(M_\alpha))$, there is a unique  sequence $(\ol v_n) = ( v_n (m_\beta))$ with  $\ol v_n \in \sigma_n (\wt v_n)$  for each $n$.
 If a perfect order $\om$ has a finite number of extremal paths, we can say more. In that case, for all large enough $n$, and all maximal $\wt v\in \wt V_n$, $ \sigma_n (\wt v) $ is an element, not a subset,  of  $\ol V_n $ - i.e. each maximal vertex can only be followed by a unique minimal vertex in  $\mathcal L(B,\om, n)$. One can then say that for all large $n$, $|\wt V_n|=|\ol V_n|$ and $\sigma_n:\wt V_n \rightarrow \ol V_n$ is a bijection. In the case of finite rank, the sets $\wt V_n$ and $\ol V_n$, and the maps $\sigma_n:\wt V_n \rightarrow \ol V_n$ could be taken to be equal for all $n$, and in that case we called $\sigma:=(\sigma_n)$ a {\em permutation} in \cite{bky}.

 In the general case of a perfect order with infinitely many extremal paths, though, this fact - that
 for any  sequence $(\wt v_n) = ( v_n(M_\alpha))$, there is a unique  sequence $(\ol v_n) = ( v_n (m_\beta))$ with  $\ol v_n \in \sigma_n (\wt v_n)$  for each $n$ -  does not generally imply that $(\wt v_n) = ( v_n(M_\alpha))$ is eventually a sequence of singletons. The main obstacle is that one can have pairs of distinct maximal paths that agree on an arbitrarily large initial segment. For,
suppose that $\varphi_\om(M) = m$, and assume that $M'$ is another maximal path that coincides with $M$ for the first $n$ segments till the vertex $\wt v_n$. By continuity of $\varphi_\om$, the minimal path $m' = \varphi_\om(M')$ must be close to $m$, but it can be that $\ol v_n =  v_n(m) \neq v_n(m') = \ol w_n$. So, we  see that not only  $\wt v_n\ol v_n \in L(B,\om, n)$ but also $\wt v_n\ol w_n \in L(B,\om, n)$, that is $\{\ol v_n, \ol w_n\} \in \sigma_n(\wt v_n)$.
One can  build such orders on any Bratteli diagram: see Example \ref{basic_set_map_correspondence_diagram}.

We use these observations to make the following definition, which generalizes the concept of a permutation for finite rank diagrams. Some notation: if $\sigma:\wt V \rightarrow 2^{\ol V}$  and   $\ol v \in \ol V$,  we define $\sigma^{-1} (\ol v):= \{\wt v: \ol v \in \sigma (\wt v)  \}.$

   \begin{definition}\label{correspondence_definition}
  Let $\mathcal F$ be a skeleton for an unordered  Bratteli diagram $B$. Suppose that  $\sigma = (\sigma_n)_n$ is a sequence of maps   $\sigma_n:\wt V_n \rightarrow 2^{\ol V_n}$, such that for each $n$,
   $\bigcup_{\wt  v\in \wt V_n} \sigma_n(\wt v ) = \ol V_n$,    and
  \begin{enumerate}
  \item $\sigma$ is {\em composition consistent:} let $M(n,N, \wt v)$ and $m(n,N, \ol v)$ denote the maximal  and minimal  paths from level $n$ to level $N  > n$ with range $\wt v$  and $\ol v$ respectively. If $\ol v \in \sigma_N(\wt v)$, then
  $s(m(n, N, \ol v)) \in  \sigma_n (s(M(n,N, \wt v)))$,
  \item
  for any $M\in X_{\max} (\mathcal F)$, there is a unique $m \in X_{\min}(\mathcal F)$ with $(v_n (m))_n
  \in \prod_n \sigma_n ( v_n (M))$,
  \item
   for any $m\in X_{\min} (\mathcal F)$, there is a unique $M \in X_{\max}(\mathcal F)$ with $(v_n (M))_n
  \in \prod_n \sigma_n^{-1} ( v_n (m))$, and
     \item  the bijection $\sigma: X_{\max} (\mathcal F) \rightarrow X_{\min} (\mathcal F)$ defined using properties 2 and 3  is a homeomorphism.
      \end{enumerate}
 Then we say that
$\sigma=(\sigma_{n})$ is a  {\em correspondence associated to $\mathcal F$}.
  \end{definition}

\begin{example}\label{basic_infinite_rank_correspondence}
We continue with the skeleton defined in Example \ref{basic_infinite_rank_example} and Figure \ref{skeleton_example}.
Let $\sigma$ be
 defined by
\[
\sigma_{n}(v_i)
   = \left\{
\begin{array}{rl}
\{v_{i+1}\}
& \mbox{if $1\leq i \leq n-1$}
   \\
\{v_{1} \}& \mbox{ if $i=n$}
  \end{array}
  \right. \,
\]
for each $n\geq 1$; then one can verify that $\sigma$ is composition consistent. Since each  $\sigma_n:\wt V_n \rightarrow \ol V_n$ is in fact a point map, this means that items (2) and (3) of Definition \ref{correspondence_definition} are satisfied. The homeomorphism $\sigma: X_{\max}(\mathcal F) \rightarrow X_{\min} (\mathcal F) $ satisfies $\sigma(M_{\infty} )= m_1$, and for $i\geq 1$, $\sigma(M_i) = m_{i+1}$.
\end{example}

\begin{example}\label{basic_infinite_rank_correspondence_uncountable}
We continue with the skeleton defined in Example \ref{basic_infinite_rank_example_uncountable}.
Let `+1' denote addition with carry, so that  for example, $(1010)+1 = (0110)$.
Let $\sigma = (\sigma_n)$ be
 defined by
\[
\sigma_{n}((x_1, \ldots  , x_n))
   = \left\{
\begin{array}{rl}
\{(x_1, \ldots , x_n)+1\}
& \mbox{if $(x_1, \ldots , x_n) \neq (1,\ldots 1)$}
   \\
\{(0,\ldots ,0) \}& \mbox{ if $(x_1, \ldots , x_n) = (1,\ldots 1)$}
  \end{array}
  \right. \,
\]
for each $n\geq 1$; then one can verify that $\sigma$ is composition consistent. Since each  $\sigma_n:\wt V_n \rightarrow \ol V_n$ is in fact a point map, this means that items (2) and (3) of Definition \ref{correspondence_definition} are satisfied.
Note that the bijection
$\sigma: X_{\max}(\mathcal F) \rightarrow X_{\min} (\mathcal F) $ satisfies $\sigma(M_{111 \ldots} )= m_{000\ldots}$, and  $\sigma(M_{x_1 x_2 \ldots}) = m_{y_1 y_2\ldots}$,  where $(y_1 y_2 \ldots ) = (x_1x_2 \ldots) +(100\ldots)$; in other words, $\sigma$ is the binary odometer map.
\end{example}

\begin{example}\label{basic_set_map_correspondence_example}
It seems to be  difficult to find examples of skeletons and accompanying correspondences where the maps $\sigma_n$ are not eventually point maps. If  $v_N(m)$ and $v_N(m')$ both belong to $\sigma_N(v_N(M))$,
the composition consistency condition forces $v_n(m)$ and $v_n(m')$  to belong to $\sigma_n(v_n(M))$ for $n<N$, making it  hard for points (2) and (3) of the definition of a correspondence to be satisfied. Here is one example, illustrated in  Figure \ref{basic_set_map_correspondence_diagram}. The vertex structure of this diagram is as in  Example \ref{basic_infinite_rank_example}: $V_n=\wt V_n= \ol V_n = \{v_1, \ldots ,v_n\}$. To define the skeleton, we let $s(\wt e_{v_i})= s(\ol e_{v_i})= v_{i-1}$ for $i>2$, $s(\wt e_{v_2})=v_1$, $s(\ol e_{v_2})=v_2$, and $s(\wt e_{v_1})=s(\ol e_{v_1})=v_1$. This skeleton is illustrated in Figure \ref{basic_set_map_correspondence_diagram}.  We define, for each $n$, $\sigma_n (v_1)= \{ v_2, v_3  \}$,  $\sigma_n (v_i) = v_{i+1} $ for $i=2, \ldots, n-1$, and $\sigma_n(v_n)=v_1$. Then $(\sigma_n)_n$ defines a correspondence, with $\sigma(M_n) = m_{n-1}$ for $n\geq 1$, and
$\sigma(M_0) = m_\infty$.
\end{example}

  \begin{figure}\label{basic_set_map_correspondence_diagram}
   \centerline{\includegraphics[scale=0.7]{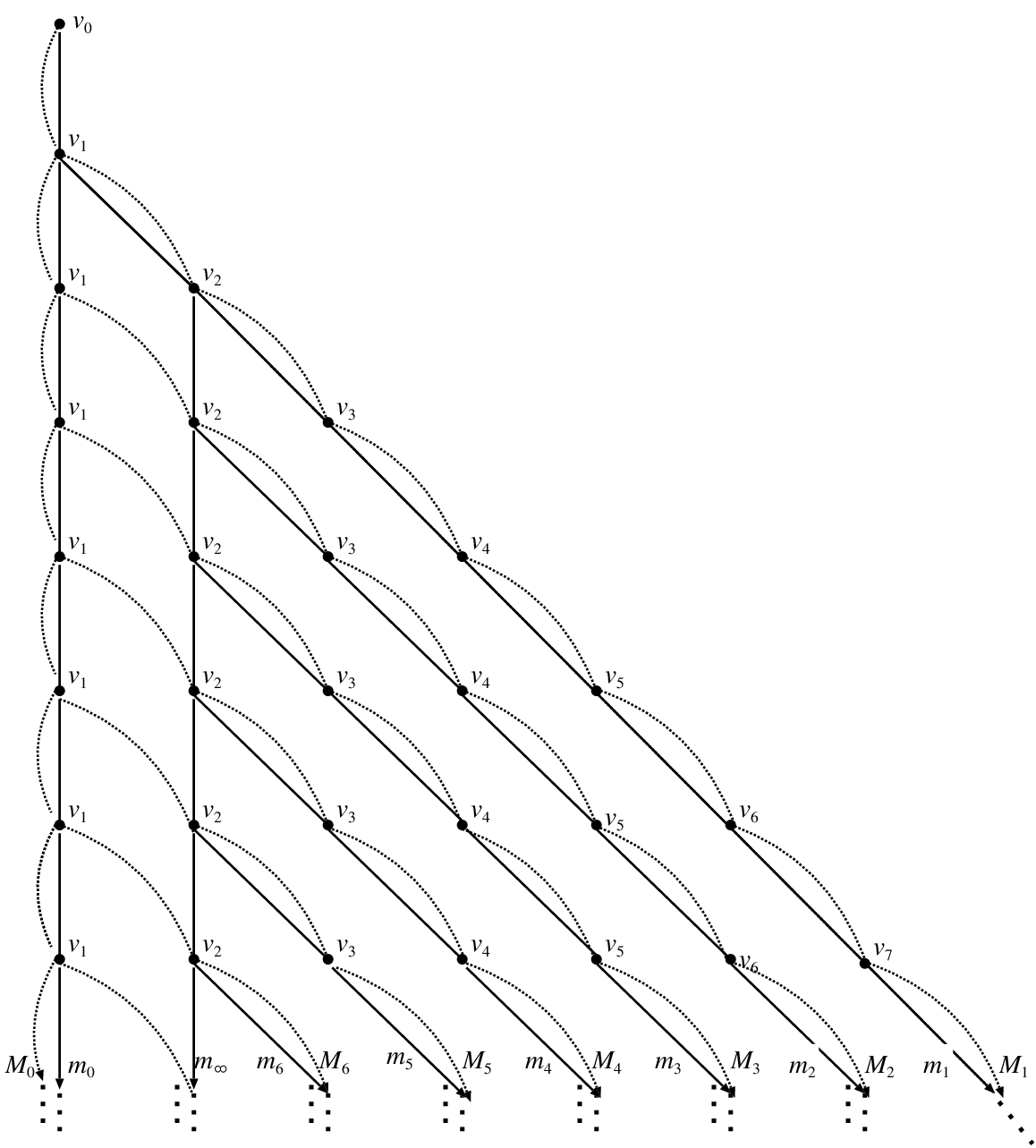}}
   \caption{The minimal and maximal edge structure for Example \ref{basic_set_map_correspondence_example}} 
  \end{figure}

In the case where  $\mathcal F$ is the skeleton associated to a well telescoped ordered diagram $(B,\om)$, we will {\em always} take  $\sigma_n$ to be that defined by $\mathcal L(B, \om, n)$, as discussed in the paragraph preceding Definition \ref{correspondence_definition}. Namely, given an order $\om$ with skeleton $\mathcal F$, we define $\ol  v\in \sigma_n (\wt v) $  if and only if $\wt v \ol v \in \mathcal L(B, \om, n)$.
Whether or not $\sigma=(\sigma_n)$ is a correspondence depends on whether $\om$ is perfect, as seen in the following proposition:

\begin{theorem}\label{infinite_rank_perfect}
Let $(B,\om)$ be a well telescoped ordered Bratteli diagram with skeleton $\mathcal F$ and accompanying maps $\sigma=(\sigma_n)$.
Then $\om$ is perfect if and only if $\sigma$ is a correspondence.
\end{theorem}

\begin{proof}
Suppose that $\om$ is perfect. The fact that $(\sigma_n)$ is composition consistent follows from the definition of the level $n$ languages $\mathcal L(B,\om,n)$. If it is the case that   for distinct minimal paths $m$, and $m'$, and a maximal path $M$, the two sequences $(v_n(m))$ and $(v_n(m'))$ belong to $(\sigma_n(v_n(M)))$ then we can build two sequences of paths $(x_n)$ and $(y_n)$, both converging to $M$, where $\varphi_\om (x_n) \rightarrow m$ and $\varphi_\om (y_n) \rightarrow m'$,  contradicting continuity of  $\varphi_\om$. Thus for each maximal path $M$, there is a unique
 $m \in X_{\min}(\om)$ with $(v_n (m))_n
  \in \prod_n \sigma_n ( v_n (M))$, and the continuity of $\varphi_\om$ implies that in fact $m=\varphi_\om (M)$.
  Similarly
   for any $m\in X_{\min} (\om)$, there is a unique $M = \varphi_\om^{-1}(m)  \in X_{\max}(\om)$ with $(v_n (M))_n
  \in \prod_n \sigma_n^{-1} ( v_n (m))$. Thus $\sigma:  X_{\max}(\om) \rightarrow X_{\min}(\om)$ coincides with $\varphi_\om:  X_{\max}(\om) \rightarrow X_{\min}(\om)$, and the fact that $\varphi_\om$ is a homeomorphism and $X_{\min}(\om)$, $X_{\max}(\om)$ are both closed  implies that $\sigma:  X_{\max}(\om) \rightarrow X_{\min}(\om)$ is a homeomorphism.

Conversely, suppose that $\sigma$ is a correspondence. The Vershik map $\varphi_\om$ is well defined everywhere, and continuous,  outside the sets of extreme paths. We use  $\sigma$ to define $\varphi_\om$ on $X_{\max}(\om)$, so that  $\sigma$ equals the restriction of $\varphi_\om$ to $X_{\max}(\om)$, a similar statement holding for $\sigma^{-1}.$
As $\sigma$ is a correspondence,   $\varphi_\om$ is continuous on $X_{\max}(\om)$; thus   to check continuity of $\varphi_\om$,
it is  sufficient to consider a  convergent sequence $(x_n)$ of  non-maximal paths, where $x_n\rightarrow M$ with $M$ maximal.  We claim that $(\varphi_\om (x_n))$  converges to some minimal sequence $m$ (and in fact this $m$ does not depend on the choice of $(x_n)$). Suppose not. Then for two subsequences $(y_n)$ and $(y_n')$ of $(x_n)$, we have $\varphi_\om (y_n)\rightarrow m$ and $\varphi_\om (y_n') \rightarrow m'$ for two paths $m\neq m'$, which are necessarily minimal paths.

Since each $y_n$  is not maximal, this implies that for some subsequence $(n_k)$, $v_{n_k}(m) \in \sigma_{n_k}(v_{n_k}(M))$, which implies, by composition consistency,  that $v_n(m) \in \sigma_n(v_n(M))$ for each $n\geq 1$. Similarly,  as each $y_n'$ is not maximal for some subsequence $(n_k')$, $v_{n_k'}(m') \in \sigma_{n_k'}(v_{n_k'}(M))$, which implies that $v_n(m') \in \sigma_n(v_n(M))$ for each $n$.
Since we have assumed that $\sigma$ is a correspondence, this contradicts the fact that there is a unique minimal element $m=\sigma(M)$ such that  $(v_n(m))_n \in \prod_n (\sigma_n (v_n(M)))$.  Compactness ensures the continuity of $\varphi_\om^{-1}.$

\end{proof}

Theorem \ref{infinite_rank_perfect} tells us that behind every perfect order $\om$ on a diagram $B$, there is an underlying skeleton $\mathcal F$ and correspondence $\sigma$. More generally, a correspondence accompanying a skeleton $\mathcal F$ will contain the information that allows us to extend the  partial definition of  orders  using $\mathcal F$ to  construct perfect orders. As in the finite rank case, the notions of accompanying partitions and associated directed graphs will be useful.

\begin{definition}
Suppose that $B$ is a  Bratteli diagram with skeleton $\mathcal F = (\wt V_{n-1}, \ol V_{n-1}, \{\wt e_{v}, \ol e_{v}: v\in V_{n}\}: n \geq 2)$ associated to  the set $\{M_\alpha: \alpha \in I\}$ of maximal paths and the set $\{m_\beta : \beta \in J\}$ of minimal paths.
For any vertices $\wt v \in
\wt V_{n-1}$ and $\ol v \in \ol V_{n-1}$, we set
\begin{equation}\label{W} W_{\wt v}(n) = \{w \in V_n : s(\wt e_w) =
\wt v\}, \ \ W'_{\ol v}(n) = \{w \in V_n : s(\ol e_w) =
\ol v\}
\end{equation}
where $ n\geq 2$. It is obvious that $W(n) = \{W_{\wt v}(n) : \wt v
\in \wt V_{n-1}\}$ and $W'(n) = \{W'_{\ol v}(n) : \ol v \in \ol
V_{n-1}\}$ form two partitions of $V_n$.
We call the sequence of partitions $W = (W(n))_n$ and $W'=(W'(n))_n$ the {\em partitions generated by  $\mathcal F$.}
\end{definition}

The intersection of $W(n)$
and $W'(n)$ is the partition $W'(n)\cap W(n)$ whose elements are
non-empty sets $W'_{\ol v}(n) \cap W_{\wt v}(n)$ where $(\ol v, \wt v)
\in \ol V_{n-1} \times \wt V_{n-1}$. We shall use the notation $[\ol v,\wt v, n]:=
W'_{\ol v}(n)\cap W_{\wt v}(n)$ for shorthand.

\begin{definition}\label{infinite_associated_directed_graphs_definition}
Let $\mathcal F$  be a skeleton on $B$ with
an associated correspondence $\sigma$.
Let  $\mathcal H_n=(T_n,P_n)$ be the directed graph where
the set $T_{n}$ of vertices of  $\mathcal H_{n}$ will consist of partition elements $[\ol v, \wt v, n]$ of
$W(n) \cap
W'(n)$, and where there is an edge in  $P_{n}$ from $[\ol v, \wt v, n]$ to $[\ol v', \wt v', n]$
if and only if $\ol v' \, \in \sigma_{n-1}(\wt v)$.
We call $(\mathcal H_n)$ the {\em sequence of directed graphs associated to $(\mathcal F, \sigma)$}.
\end{definition}

\begin{remark}The vertices of $\mathcal H_n$ are labeled by $[\ol v, \wt v, n]$ where $(\ol v, \wt v)\in \ol V_{n-1} \times \wt V_{n-1}$. On the other hand, the set $[\ol v, \wt v, n]$ is a set of  vertices in $V_n$. When we speak about a path in $\mathcal H_n$, we mean a concatenated sequence of directed edges between vertices of $\mathcal H_n$; these paths  will correspond to families of words in $V_n^+$. The next proposition tells us that if $\om$  is  perfect, then words in $\mathcal L(B,\om,n)$ must come from a valid path in $\mathcal H_n$;
it is the appropriate generalization of Lemma \ref{graphs_determine_perfection}.
\end{remark}

\begin{proposition}\label{perfect_order_characterisation}
Suppose that $\mathcal F$ is a skeleton on $B$, with a correspondence $\sigma$. Let $(\mathcal H_n)_n$ be the sequence of directed graphs associated to $(\mathcal F,\sigma)$.

\begin{enumerate}
\item
If  the perfect order $\om$ has associated skeleton and correspondence $(\mathcal F, \sigma)$,  then words in $\mathcal L(B,\om,n)$ correspond to paths in $\mathcal H_n$.
\item
Let $\om$  be defined to have $\mathcal F$ and correspondence $\sigma$, and where for each $n$, all words in $\mathcal L(B,\om,n)$ correspond to paths in   $\mathcal H_n$. Then $\om $  is perfect.
\end{enumerate}
\end{proposition}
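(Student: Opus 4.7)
The plan is to exploit the block factorization
\[
w(v,n-1,N) \;=\; w(v_1,n-1,n)\,w(v_2,n-1,n)\cdots w(v_p,n-1,n),\qquad v_1v_2\cdots v_p = w(v,n,N),
\]
which follows because the lex order on $E(V_{n-1},v)$ is first decided by the tail from level $n$ onward. Each block $w(v_i,n-1,n)$ enumerates the sources of $r^{-1}(v_i)$ in lex order, so it begins with $\ol{v_i} := s(\ol e_{v_i}) \in \ol V_{n-1}$ and ends with $\wt{v_i} := s(\wt e_{v_i}) \in \wt V_{n-1}$. Part (1) follows at once: a subword $W = v_1'\cdots v_k' \in \mathcal L(B,\om,n)$ of some $w(v,n,N)$ produces at each boundary between consecutive blocks a pair $\wt{v_i'}\,\ol{v_{i+1}'} \in \mathcal L(B,\om,n-1)$; since $\om$ is perfect, Proposition~\ref{infinite_rank_perfect} identifies its associated $\sigma$ with the language-defined $\sigma^\om$, so $\ol{v_{i+1}'} \in \sigma_{n-1}(\wt{v_i'})$, which is exactly the condition for the $\mathcal H_n$-edge from $[\ol{v_i'},\wt{v_i'},n]$ to $[\ol{v_{i+1}'},\wt{v_{i+1}'},n]$.

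For Part (2) the plan is to extend the Vershik map by setting $\varphi_\om(M) := \sigma(M)$ for each $M \in X_{\max}(\mathcal F) = X_{\max}(\om)$. Since $\sigma\colon X_{\max}(\mathcal F) \to X_{\min}(\mathcal F)$ is a homeomorphism by Definition~\ref{correspondence_map}(4) and $\varphi_\om$ is already a continuous bijection on the cofinite set $X_B\setminus X_{\max}(\om)$, this yields a bijection of $X_B$; it suffices to establish continuity at each maximal path, the minimal case being symmetric. Fix $M$, take $x_k \to M$ with $x_k$ non-maximal, and by compactness let $y$ be an arbitrary subsequential limit of $(\varphi_\om(x_k))$; the goal is $y = \sigma(M)$. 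First, $y \in X_{\min}(\mathcal F)$: otherwise $\varphi_\om^{-1}$ would be continuous at $y$, giving $\varphi_\om^{-1}(y) = \lim_j x_{k_j} = M$, which is impossible because $\varphi_\om^{-1}$ avoids maximal paths.

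Next fix $n$ and choose $j$ so large that $v_n(x_{k_j}) = v_n(M)$, $v_n(\varphi_\om(x_{k_j})) = v_n(y)$, and the first non-maximal position $k_0 := k_0(x_{k_j})$ exceeds $n$. The Vershik rule makes the restrictions of $x_{k_j}$ and $\varphi_\om(x_{k_j})$ to levels $n$ through $k_0$ lex-consecutive in $E(V_n, v_{k_0}(x_{k_j}))$: at position $k_0$ the two edges are immediate neighbors in $r^{-1}(v_{k_0}(x_{k_j}))$, above level $k_0$ the paths agree, and below level $k_0$ the first path is maximal while the second is minimal, so no lex-intermediate path can exist. Their sources are $v_n(M)$ and $v_n(y)$, so $v_n(M)\,v_n(y) \in \mathcal L(B,\om,n)$; by the hypothesis this pair corresponds to an $\mathcal H_n$-edge, yielding $v_{n-1}(y) \in \sigma_{n-1}(v_{n-1}(M))$. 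Letting $n$ vary, $(v_n(y))_n \in \prod_n \sigma_n(v_n(M))$, and the uniqueness clause in Definition~\ref{correspondence_map}(2) applied to $\sigma$ forces $y = \sigma(M)$. The main obstacle is the lex-consecutivity claim above, which I expect to reduce to a direct unpacking of the Vershik rule combined with the description of immediate successors in $r^{-1}(v_{k_0}(x_{k_j}))$.
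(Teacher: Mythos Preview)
Your proof is correct and follows essentially the same strategy as the paper's: for Part~(1) you both reduce to length-two subwords and observe that the block factorization forces $\wt v\,\ol v' \in \mathcal L(B,\om,n-1)$, hence $\ol v' \in \sigma_{n-1}(\wt v)$; for Part~(2) you both extend $\varphi_\om$ by $\sigma$ on $X_{\max}$, take a non-maximal sequence $x_k \to M$, pass to a subsequential limit of the images, show the limit lies in $\prod_n \sigma_n(v_n(M))$, and invoke the uniqueness in Definition~\ref{correspondence_map}(2).

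The one noteworthy difference is in how the key containment $v_{n-1}(y) \in \sigma_{n-1}(v_{n-1}(M))$ is obtained. The paper appeals to composition consistency of $\sigma$ to propagate the relation from the highest level where the Vershik flip occurs down to all lower levels in one stroke. You instead work directly at each fixed level $n$, using the lex-consecutivity of the two truncated paths in $E(V_n, v_{k_0})$ to place $v_n(M)\,v_n(y)$ in $\mathcal L(B,\om,n)$ and then invoke the $\mathcal H_n$-path hypothesis. Your route is slightly more self-contained and makes the role of the hypothesis more transparent; the paper's route is terser but leans on unpacking Definition~\ref{correspondence_map}(1). You are also more careful than the paper in justifying that the subsequential limit $y$ is minimal, which the paper simply asserts.
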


\begin{proof}

For a given perfect order $\om$, the map $\sigma_n$ is defined using the language $\mathcal L(B,\om,n)$. If $vw\in \mathcal L(B,\om,n)$, where $v\in [\ol v, \wt v, n]$ and $w\in [\ol v', \wt v',n]$, then $\wt v \ol v' \in \mathcal L(B,\om, n-1)$, so that $\ol v' \in \sigma_{n-1} (\wt v)$. Thus $vw$ corresponds to a path in $\mathcal H_n$. The argument for longer words in $\mathcal L(B,\om,n)$
is similar.

To prove the second statement, take non-maximal paths $(x_n)$ converging to a maximal  $M$. We shall show that in fact $\varphi_\om (x_n) \rightarrow \sigma(M)$. This implies that $\varphi_\om$ is continuous, and also that $\varphi_\om:X_{\max}(\om) \rightarrow X_{\min}(\om)$ can be defined coinciding with $\sigma:X_{\max}(\mathcal F) \rightarrow X_{\min}(\mathcal F)$.
Suppose that $x_n$ agrees with $M$ to level $k_n$.
Then, since $\sigma$ is composition consistent,
$v_j(\varphi_\om(x_n))\in \sigma_j (v_j(M))$ for each $j\leq k_n$.   For some subsequence $n_l$,  $\varphi_\om (x_{n_l})\rightarrow m$ where $m$ is a minimal path. This implies that $(v_j(m)) \in \prod_{j=1}^{\infty} \sigma(v_j(M))$, and by conditions (2) and (4) of Definition \ref{correspondence_definition}, $m = \sigma(M)$. Since any subsequence of $(\varphi_\om (x_n))$ has a subsequence that converges to $m$, it follows that $\varphi_\om (x_n) \rightarrow m$.
\end{proof}

\begin{example}
\label{basic_infinite_rank_graph}
We continue Examples \ref{basic_infinite_rank_example} and \ref{basic_infinite_rank_correspondence}:
illustrated in Figure \ref{H_n_example} is the graph $\mathcal H_n$ associated to the skeleton and correspondence considered in those examples.
Let $w^{(n)}= v_{1}\ldots v_{n}$; then $w^{(n)}$ is generated from a
 path in $\mathcal H_{n}$.
  Suppose that all words $w(v,n,n+1)$ are  defined using $w^{(n)}$, subject to the constraints of the skeleton $\mathcal F$ defined in Example \ref{basic_infinite_rank_example}, for example, $w(v_1, n, n+1)$ must both start and end with $v_1$. Then, {\em provided} that the incidence matrices $(\mathcal F_n)$ of $B$ allow us, we can define a perfect order $\om$ with skeleton $\mathcal F$ as in Example \ref{basic_infinite_rank_example} and accompanying correspondence $\sigma$ as in Example \ref{basic_infinite_rank_correspondence}.  For example, the $v_1$-indexed row of $\mathcal F_n$ must be of the form $(\alpha_n +1, \alpha_n, \ldots, \alpha_n, \beta_n)$ where $\alpha_n$ and $\beta_n$ are  positive integers. This will be further elucidated in Theorem \ref{analogous condition}.
\end{example}

 \begin{figure}[h]
\centerline{\includegraphics[scale=0.9]{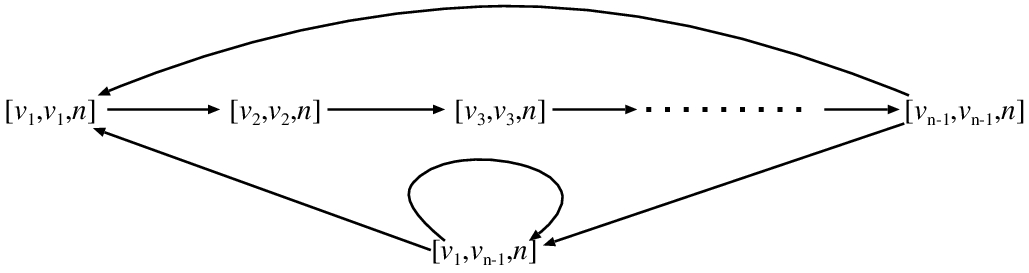}}
\caption{The graph $\mathcal H_n$ in Example \ref{basic_infinite_rank_graph}}\label{H_n_example}
 \end{figure}

Finally we state and prove the analogue of Proposition 3.19 in \cite{bky}. Recall that a directed graph is {\em strongly connected} if for any two vertices $v$, $v'$, there are paths from $v$ to $v'$, and also from $v'$ to $v$. If at least one of these paths exist, then $G$ is {\em weakly connected}. Recall also the definition of the family $\mathcal A$ of Bratteli diagrams (Definition \ref{aperiodic_definition}).

\begin{proposition}\label{connectedness and goodness}
 Let $(B, \om)$ be a well telescoped, perfectly  ordered  Bratteli diagram
with skeleton $\mathcal F_\om$,
and correspondence $\sigma$. Let  $(\mathcal H_n)_n$ be the sequence of associated directed graphs.

\begin{enumerate}
\item If $B$ is simple, then  $\mathcal H_n$ is
strongly connected for any $n$.
\item If $B \in \mathcal A$,
 $\mathcal H_n$ is
weakly connected for any $n$.
\end{enumerate}
\end{proposition}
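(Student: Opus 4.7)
The plan is to exploit Proposition \ref{perfect_order_characterisation}: since $\om$ is perfect with skeleton $\mathcal F$ and correspondence $\sigma$, every word in $\mathcal L(B,\om,n)$ corresponds to a directed path in $\mathcal H_n$. So the task reduces to producing, in each case, words in $\mathcal L(B,\om,n)$ that witness the required directed (resp.\ undirected) connectivity between prescribed vertices of $\mathcal H_n$.

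For part (1), the simple case, I would first telescope $B$ so that for every $N \geq n$ all entries of $F_N$ are strictly positive; this is possible since $B$ is simple. Fix two vertices $H_1=[\ol v_1,\wt v_1,n]$ and $H_2=[\ol v_2,\wt v_2,n]$ of $\mathcal H_n$ and choose representatives $v_1\in H_1$, $v_2\in H_2$ in $V_n$. For any $w\in V_{N+1}$, the vertex $v\in V_n$ appears in the word $w(w,n,N+1)$ with multiplicity equal to $(F_n F_{n+1}\cdots F_N)_{w,v}$, which grows without bound as $N\to\infty$. Thus for $N$ large one can locate an occurrence of $v_1$ strictly preceding an occurrence of $v_2$ inside $w(w,n,N+1)$; the subword linking them lies in $\mathcal L(B,\om,n)$ by definition of the level-$n$ language. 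By Proposition \ref{perfect_order_characterisation}(1) this subword corresponds to a directed path in $\mathcal H_n$ from $H_1$ to $H_2$. Swapping the roles of $v_1$ and $v_2$ produces a directed path in the reverse direction, and strong connectedness follows.

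For part (2), where $B \in \mathcal A$, I will use only a single level. Pick any vertex $w$ in the ``bottom'' block $V_{n+1}^{(c)}$ (the rows of $C_n$). The defining hypothesis of $\mathcal A$ gives that the entries of $B_n^{(1)}, \ldots, B_n^{(k)}$ and $C_n$ are all strictly positive, so every $v\in V_n$ is joined to $w$ by at least one edge, and thus $v$ appears in $w(w,n,n+1)$ at least once. Because every vertex of $V_n$ lies in a unique partition element $[\ol v,\wt v,n]$ and the vertices of $\mathcal H_n$ are by definition exactly the non-empty partition elements, the directed path in $\mathcal H_n$ corresponding to $w(w,n,n+1)$ visits every vertex of $\mathcal H_n$. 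Forgetting the directions of its edges yields an undirected spanning walk, and therefore the underlying undirected graph of $\mathcal H_n$ is connected, i.e. $\mathcal H_n$ is weakly connected.

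Neither part presents a serious technical obstacle: the content is really packaged in Proposition \ref{perfect_order_characterisation} and in the block-positivity built into $\mathcal A$. The only point that deserves care is verifying that the subword ``$v_1\cdots v_2$'' cut out of $w(w,n,N+1)$ is indeed an element of $\mathcal L(B,\om,n)$, which is immediate from the definition of $\mathcal L$ as the set of subwords of the $w(v,n,N)$. A secondary subtlety, worth flagging but not obstructing the argument, is that nothing forces the directed walk in part (2) to be strongly connected, which is precisely why in the non-simple case we can only conclude weak connectedness and not something stronger.
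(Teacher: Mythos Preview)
Your argument is correct and follows the same line as the paper: invoke Proposition~\ref{perfect_order_characterisation}(1) and then exhibit words of $\mathcal L(B,\om,n)$ that touch the required vertices of $\mathcal H_n$. Part~(2) matches the paper's hint exactly (use a vertex indexing a strictly positive row of $F_n$).

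The one soft spot is in part~(1): the implication ``multiplicities grow without bound, hence some occurrence of $v_1$ precedes some occurrence of $v_2$'' is not literally justified by multiplicity alone --- nothing a priori rules out all $v_2$'s preceding all $v_1$'s in a single $w(w,n,N+1)$. The easy fix within your framework is to go one more level: $w(w',n,N+2)$ is a concatenation of at least two blocks $w(u,n,N+1)$, each containing both $v_1$ and $v_2$, so a $v_1$ from the first block precedes a $v_2$ from the second. The paper avoids this detour by a sharper choice at level $n+1$: given the target $[\ol v_3,\wt v_3,n]$, it first steps from $[\ol v_1,\wt v_1,n]$ along an $\mathcal H_n$-edge to some $[\ol v_2,\wt v_2,n]$ containing a minimal vertex $\ol{\ol v}\in\ol V_n$, and then picks $v\in V_{n+1}$ with $s(\ol e_v)=\ol{\ol v}$, so that $w(v,n,n+1)$ \emph{begins} at $\ol{\ol v}$ and (by positivity of $F_n$) contains any chosen $v_3\in[\ol v_3,\wt v_3,n]$ later on. This gives the path in a single level without the depth argument.
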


\begin{proof} We prove (1) - the proof of (2) is similar (if we focus on $w(v,n-1,n)$ where $v$ is the vertex which indexes the strictly positive row in $F_{n}$).  In the case of simple diagrams, we can assume that all entries of $F_n$ are positive for each $n$.

 Take two vertices $[\ol v_1, \wt v_1,n]$ and $ [\ol v_3, \wt v_3 ,n]$
in $\mathcal H_n$. If $\ol v_2\in \sigma_{n-1}(\wt v_1)$, then there is some vertex $\ol{\ol v}\in \ol V_n$ such that $s(\ol e_{\ol{\ol v}})=\ol v_2$. Let $\ol{\ol v}\in [\ol v_2, \wt v_2,n]$. Clearly there is an edge from
$[\ol v_1, \wt v_1,n]$ to $[\ol v_2, \wt v_2, n]$ in $\mathcal H_n$.

 Let $v\in V_{n+1}$ be such that $s(\ol e_v)= \ol{\ol v}$.
Let $v_3\in [\ol v_3,\wt v_3,n]$. Since $B$ is simple, $f_{v,v_3}^{(n)}>0$; this means that $\ol{\ol  v} \ldots v_3$ is a prefix of $w(v,n,n+1)$. This implies that there is  path in $\mathcal H_n$ from  $[\ol v_2, \wt v_2,n]$ to $[\ol v_3,\wt v_3,n]$.
\end{proof}

\begin{remark} It is not hard to see that the converse statement to Proposition \ref{connectedness and goodness} is not true. There are examples of non-simple diagrams of finite rank whose associated graphs are strongly connected.

Note also that the
assumption that $\om$ is perfect is crucial.
Moreover, there are examples of \textit{simple} finite rank Bratteli
 diagrams and skeletons none of whose associated graphs are  strongly
connected. Indeed,
 let $B$ be a stationary  diagram with $V = \{a,b,c\}$ with the skeleton
 $\mathcal F = \{M_a, M_b, m_a, m_b; \wt e_c, \ol e_c\}$ where $s(\wt
 e_c) =b, s(\ol e_c) = a$. Let $\sigma(a) = a, \sigma(b) =
 b$. Constructing the associated graph $\mathcal H$, we see that there
 is no path  from $[b,b]$ to $[a,a]$. It can be also shown that there is
 no perfect ordering $\om$ such that $\mathcal F = \mathcal F_\om$. This
 observation complements  Proposition \ref{connectedness and goodness}
 by stressing the importance of the strong connectedness of $\mathcal H_{n}$ for the existence of perfect orderings.
\end{remark}

\section{Characterizing Bratteli diagrams that support perfect orders}\label{characterization}

In this section we characterize Bratteli diagrams that support perfect, non-proper orders via their incidence matrices. Our main result is Theorem    \ref{analogous condition}, which extends a similar result proved in \cite[Theorem 4.6]{bky} for finite rank diagrams. We  define the class $\mathcal P_B^*$, a set of perfect orders whose language properties are similar to those of orders in $\mathcal P_B(j)$, $j$ finite,  and for whom a refined version of Theorem   \ref{analogous condition} holds, namely  Corollary \ref{analogous_special}.

The intuition behind the proof of Theorem \ref{analogous condition} is
the following idea.  If a diagram $B$ is to support a perfect,
well-telescoped order $\om$, then $\om$ would define a skeleton $\mathcal
F$ and correspondence $\sigma$. The correspondence intrinsically
contains the information about the languages defined by $\om$, and
this is further expressed with the sequence $(\mathcal H_n)$ of
directed graphs, in that words in $\mathcal L(B,\om,n)$ must
correspond to paths in $\mathcal H_n$.   Words in $\mathcal L(B,\om,n)$
are generated by the orders placed on the edges in $r^{-1}(u)$, where
$u\in V_m$ and $m>n$.  To define $w(u,n,n+1)$ for $u\in V_{n+1}$, then, we use $\mathcal H_n$.   The edge structure of $\mathcal
H_n$ implies that if a word $vw$ lies in $w(u,n,n+1)$ with
$v$ belonging to $W_{\wt v}(n)$, then $w$ must belong to $W_{\ol
  v}(n)$ for some $\ol v \in \sigma_{n-1}(\wt v)$ - this is the
statement of Lemma \ref{successor}. This means that every time we leave a vertex in $\mathcal H_n$ of the form $[*,\wt v , n]$, we must go to a vertex of the form $[\ol  v ,*,  n]$ for some $\ol v \in \sigma_{n-1}(\wt v)$.  Thus the $u$-th row of the $n$-th incidence
matrix $F_{n}$ must have a `balance' between entries
$f_{u,v}^{(n)}$, where $v\in W_{\wt v}(n)$, and entries
$f_{u,v'}^{(n)}$, where $v'\in \bigcup_{ \ol v \in \sigma_{n-1} (\wt
  v) } W_{\ol v}(n)$. This is more precisely stated in Corollaries
\ref{necessary condition general} and \ref{necessary condition}. It
turns out that these `balance' requirements, the system of relations
(\ref{general equal cardinalities one a}), along with the system
(\ref{general equal cardinalities two a}), are also sufficient for the existence
of a perfect order on $B$.

First, we define the class $\mathcal P_B^*$, a class of perfect orders that  naturally generalizes the class of perfect orders with finitely many extremal paths,  and also  introduce/re-introduce notation we shall need.

\begin{definition} Let $(B,\om)$ be a well telescoped, perfectly  ordered diagram  with skeleton $\mathcal F$ and correspondence $\sigma =(\sigma_n)$.
We say that $\om$ belongs to $\mathcal P_B^*$ if
$\om$ satisfies the following conditions:
 for each maximal path $M$ with $\wt v_n = v_n(M)$, $\sigma_n(\wt v_n) \in \ol V_n$   for all $n$ sufficiently large, and for each minimal path $m$ with $\ol v_n = v_n(m)$, $\sigma_n(\ol v_n)^{-1} \in \wt V_n$  for all $n$ sufficiently large.
 \end{definition}
In fact, it seems (see the comment in  Example  \ref{basic_set_map_correspondence_example}) unnatural for a perfect order to not belong to $\mathcal P_B^*$.  All well telescoped perfect orders with finitely many maximal paths belong to $\mathcal P_B^*$, i.e. $\mathcal P_B\cap \mathcal O_B(j) \subset \mathcal P_B^*$ for each finite $j$.
Also, suppose the perfect  well telescoped order $\om$ is such that  for each maximal  path $M_\alpha$, and each minimal path $m_\beta$, there exist neighborhoods $U(M_\alpha)$ and $U(m_\beta)$ such that no other maximal paths belong to $U(M_\alpha)$, and no other minimal paths are in $U(m_\beta)$; then $\om \in \mathcal P_B^*$.  Note that such an order can have at most countably many extremal paths, since
  the correspondence $M_\alpha \to U(M_\alpha)$ is injective, and  the set of clopen sets is countable.

Let $\om $ be a perfect order on $B$. Recall  that $\om$ generates the skeleton $\mathcal F_\om = (\wt V_{n-1}, \ol V_{n-1}, \{\wt e_v, \ol e_v: v\in V_n\}, n >1)$ and two partitions $W(n) = \{W_{\wt v}(n) : \wt v \in \wt V_{n-1}\}$ and $W'(n) = \{W'_{\ol v}(n) : \ol v \in \ol V_{n-1}\}$ of $V_n$. Moreover, we have also a sequence of correspondences $\sigma_n : \wt V_n \to 2^{\ol V_n}, n \geq 1,$ defined by $\om$. We recall also the notation used for maximal (minimal) paths: if $M$ is a maximal path then it determines uniquely a sequence of maximal vertices $(\wt v_n = v_n(M))$.

Let $E(V_n, u)$ be the set of all finite paths
between vertices of level $n$ and a vertex $u\in V_m$ where $m >
n$. The symbols $\wt e(V_n, u)$ and $\ol e(V_n, u)$ are used to denote
the maximal and minimal finite paths in $E(V_n, u)$, respectively; if $m = n+1$ so that $u\in V_{n+1}$, then we revert to the shorter notation $\wt e_u$ and $\ol e_u$. Fix a
maximal and minimal vertex $\wt v \in \wt V_{n-1}$ and $\ol v\in \ol V_{n-1}$ respectively. Denote $E(W_{\wt v}(n), u) = \{e \in E(V_n, u): s(e) \in W_{\wt v}(n), r(e) = u\}$ and
$E(W'_{\ol v}(n), u) = \{e \in E(V_n, u): s(e) \in W'_{\ol v}(n), r(e) = u\}$.
Clearly, the sets $\{E(W_{\wt v}(n), u): \wt v \in \wt V\}$ and $\{E(W_{\ol v}(n), u): \ol v \in \ol V\}$ form two partitions of $E(V_n, u)$.
It may happen that the maximal finite path $\wt e(V_n, u)$ has its source in $W_{\wt v}(n)$. In this case, we define $\wt E(W_{\wt v}(n), u) = E(W_{\wt v}(n), u) \setminus \{\wt e(V_n, u)\}$. Otherwise,  $\wt E(W_{\wt v}(n), u) = E(W_{\wt v}(n), u)$. Similarly we define the set $\ol E(W'_{\ol v}(n), u)$ using the minimal finite path $\ol e(V_n, u)$.

In the next few results we assume that a perfect, well telescoped  order $\om$ has attached its skeleton $\mathcal F$, correspondence $\sigma$ and partitions $(W(n))$ and $(W'(n))$. Also, for brevity we shall abuse notation: if $e$ is an edge, (or a set of edges), we will write $\varphi_\om(e)$ instead of the more correct $\varphi_\om (U(e))$.

\begin{lemma}\label{successor} Suppose $(B, \om)$ is a well telescoped, perfectly  ordered Bratteli diagram. Let $\wt v \in \wt V_{n-1}$. If  $u\in V_m$, $m>n$, and $e \in \wt E(W_{\wt v}(n),u)$, then $\varphi_\om(e) \in \bigcup_{ \ol v \in   \sigma_{n-1}(\wt v)   }\ol E(W_{\ol v}(n),u)$.
\end{lemma}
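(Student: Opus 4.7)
The plan is to realize $\varphi_\om(e)$ as the immediate successor of $e$ in the lex order on $E(V_n,u)$, and then to read off the source of that successor through a single application of Proposition~\ref{perfect_order_characterisation}(1), which converts length-two subwords of $w(u,n,m)$ directly into edges of $\mathcal H_n$. First I would confirm that this successor is well-defined: the unique element of $E(V_n,u)$ consisting entirely of maximal edges is $\wt e(V_n,u)$, so the hypothesis $e\in \wt E(W_{\wt v}(n),u)$ implies $e\neq \wt e(V_n,u)$, and hence $e$ has at least one non-maximal edge.

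Set $v:=s(e)$, $f:=\varphi_\om(e)\in E(V_n,u)$ (the lex successor of $e$), and $w:=s(f)$. By construction $e$ and $f$ are consecutive in the lex enumeration $e_1<e_2<\cdots<e_p$ of $E(V_n,u)$, so the sources $v$ and $w$ are consecutive letters in the word $w(u,n,m)=s(e_1)s(e_2)\cdots s(e_p)$. Hence $vw$ is a length-two subword of $w(u,n,m)$ and therefore $vw\in\mathcal L(B,\om,n)$. Since $\om$ is perfect, Proposition~\ref{perfect_order_characterisation}(1) lets me conclude that $vw$ corresponds to a path in $\mathcal H_n$. Writing $v\in[\ol v_1,\wt v_1,n]$ and $w\in[\ol v_2,\wt v_2,n]$, the existence of the edge $[\ol v_1,\wt v_1,n]\to[\ol v_2,\wt v_2,n]$ in $\mathcal H_n$ forces $\ol v_2\in\sigma_{n-1}(\wt v_1)$; and the hypothesis $v\in W_{\wt v}(n)$ pins down $\wt v_1=\wt v$, giving $\ol v_2\in\sigma_{n-1}(\wt v)$.

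Finally, from $w\in W'_{\ol v_2}(n)$ it follows that $f\in E(W'_{\ol v_2}(n),u)$, and because $f>e\geq\ol e(V_n,u)$ we have $f\neq\ol e(V_n,u)$, so in fact $f\in \ol E(W'_{\ol v_2}(n),u)\subseteq\bigcup_{\ol v\in\sigma_{n-1}(\wt v)}\ol E(W'_{\ol v}(n),u)$, which is the conclusion of the lemma. The main difficulty here is essentially bookkeeping rather than mathematical: one has to pass comfortably between the cylinder-level interpretation $\varphi_\om(e)=\varphi_\om(U(e))$ and the lex-successor reading on the finite set $E(V_n,u)$, and to verify that the partition labels $(\ol v_i,\wt v_i,n)$ encoding the max/min incoming edge structure at $v,w$ match directly with the hypothesis set $W_{\wt v}(n)$ and the target sets $W'_{\ol v}(n)$ appearing in the conclusion.
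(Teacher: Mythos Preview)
Your proof is correct and follows essentially the same idea as the paper's: both read off a two-letter subword of $w(u,\cdot,m)$ and use it to conclude that the relevant minimal vertex lies in $\sigma_{n-1}(\wt v)$. The only difference is packaging: the paper extends $e$ and $\varphi_\om(e)$ down to level $n-1$ by concatenating the maximal and minimal edges into $s(e)$ and $s(\varphi_\om(e))$, obtaining $\wt v\,\ol v\in\mathcal L(B,\om,n-1)$ directly from the definition of $\sigma_{n-1}$, whereas you stay at level $n$ and invoke Proposition~\ref{perfect_order_characterisation}(1), whose proof is precisely that level-drop argument.
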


\begin{proof}
Extend $e$ to a path $e*$ in $\wt E(\wt v, u)$ by concatenating the maximal edge in $E(\wt v, s(e))$ to $e$. Similarly,
extend $\varphi_\om(e)$ to a path $e^{**} $ in $\ol E(\ol v,u)$ by concatenating the minimal edge in $E(\ol v, \varphi_\om(e))$ to $\varphi_\om(e)$.
Since $\varphi_{\om}(e^*)= (e^{**})$, this means that  $\wt v\ol v \subset w(u,n-1,m)$, so that $\wt v\ol v \in \mathcal L(B,\om,n-1)$. By definition of $\sigma_{n-1}$, $\ol v\in \sigma_{n-1}(\wt v)$.

\end{proof}

The following corollary  can be easily deduced  from Lemma \ref{successor}.

\begin{corollary}\label{necessary condition general}
Let $(B, \om)$ be a   well telescoped, perfectly ordered Bratteli diagram. Then  for any $n\geq 2$, $\wt v \in \wt V_{n-1}$
and $u\in V_m,$ $m>n$, we have
\begin{equation}\label{general equal cardinalities two}
\sum_{\ol v \in \sigma_{n-1}(\wt v)}
|  \wt E(W_{\wt v}(n), u) \cap \varphi_\om^{-1}( \ol E(W'_{\ol v}(n), u)     )    | = |  \wt E(W_{\wt v}(n), u)    |.
\end{equation}
Also if $\ol v\in \ol V_{n-1}$, then
\begin{equation}\label{general equal cardinalities one}
\sum_{\wt v: \ol v \in \sigma_{n-1}(\wt v)}
|  \wt E(W_{\wt v}(n), u) \cap \varphi_\om^{-1}( \ol E(W'_{\ol v}(n), u)     )    | = |  \ol E(W'_{\ol v}(n), u)    |.
\end{equation}
\end{corollary}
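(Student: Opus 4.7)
The plan is to view both identities as counting statements about a single bijection of finite-path sets, where Lemma \ref{successor} controls how this bijection respects the relevant partitions. First, I would record that $\{E(W_{\wt v}(n), u) : \wt v \in \wt V_{n-1}\}$ and $\{E(W'_{\ol v}(n), u) : \ol v \in \ol V_{n-1}\}$ are partitions of $E(V_n, u)$, and that removing the unique maximal finite path $\wt e(V_n, u)$ from one cell of the first and the unique minimal finite path $\ol e(V_n, u)$ from one cell of the second produces partitions $\{\wt E(W_{\wt v}(n), u)\}$ and $\{\ol E(W'_{\ol v}(n), u)\}$ of $E(V_n, u) \setminus \{\wt e(V_n, u)\}$ and $E(V_n, u) \setminus \{\ol e(V_n, u)\}$ respectively. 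Since the diagram has been telescoped so that every maximal (minimal) edge into $V_n$ has source in $\wt V_{n-1}$ ($\ol V_{n-1}$), the finite-path Vershik map $\varphi_\om$ is a bijection between these two sets, sending each non-maximal finite path in $E(V_n,u)$ to its immediate lexicographic successor.

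Next, for equation (\ref{general equal cardinalities two}), I would apply Lemma \ref{successor} to each $e \in \wt E(W_{\wt v}(n), u)$ to place $\varphi_\om(e)$ in some $\ol E(W'_{\ol v}(n), u)$ with $\ol v \in \sigma_{n-1}(\wt v)$. Intersecting with $\wt E(W_{\wt v}(n), u)$ and using that the $\ol E(W'_{\ol v}(n), u)$ are pairwise disjoint, I obtain the decomposition
$$\wt E(W_{\wt v}(n), u) = \bigsqcup_{\ol v \in \sigma_{n-1}(\wt v)} \bigl(\wt E(W_{\wt v}(n), u) \cap \varphi_\om^{-1}(\ol E(W'_{\ol v}(n), u))\bigr),$$
and taking cardinalities gives the desired identity. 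For equation (\ref{general equal cardinalities one}), I would run the analogous argument in the inverse direction: the symmetric version of Lemma \ref{successor} (obtained by considering predecessors instead of successors, or equivalently applying the same lemma to the reversed order) shows that $\varphi_\om^{-1}$ carries $\ol E(W'_{\ol v}(n), u)$ into $\bigcup_{\wt v : \ol v \in \sigma_{n-1}(\wt v)} \wt E(W_{\wt v}(n), u)$. This yields the analogous disjoint decomposition of $\ol E(W'_{\ol v}(n), u)$, and counting finishes the proof.

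No serious obstacle is expected, since the content is essentially Lemma \ref{successor} plus a partition argument. The one care point is to verify that removing exactly the maximal and minimal finite paths from the relevant sets makes $\varphi_\om$ a bijection between them; this uses the telescoping assumption that each maximal (minimal) edge into $V_n$ already has source in $\wt V_{n-1}$ ($\ol V_{n-1}$), so that the unique maximal (minimal) finite path from level $n$ to $u$ lies in exactly one cell of the corresponding partition and is the only element that must be excised.
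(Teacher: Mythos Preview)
Your proposal is correct and matches the paper's approach exactly: the paper simply states that the corollary ``can be easily deduced from Lemma~\ref{successor}'' without further detail, and your argument supplies precisely those details via the partition-and-count mechanism. One minor remark: for equation~(\ref{general equal cardinalities one}) you do not actually need a separate ``symmetric'' version of Lemma~\ref{successor}; the original lemma applied to $e = \varphi_\om^{-1}(e')$ already forces $\ol v \in \sigma_{n-1}(\wt v)$ for the unique $\wt v$ with $e \in \wt E(W_{\wt v}(n),u)$.
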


We can refine the statement of Corollary \ref{necessary condition general} in some special cases:

\begin{corollary}\label{necessary condition}
Let $(B, \om)$ be a well telescoped, perfectly  ordered Bratteli diagram.
\begin{enumerate}
 \item Suppose that  $\om \in \mathcal P_B \cap \mathcal O_B(j)$.  Then  there exists an $n_0$ such that for any $n \geq
n_0$, any vertex $\wt v \in \wt V_{n-1}$, any $m >n$, and any $u \in V_m$,  $\sigma_{n-1}(\wt v)$ is a singleton and one has
\begin{equation}\label{equal cardinalities}
|\wt E(W_{\wt v}(n), u)| = |\ol E(W'_{\sigma_{n-1}(\wt v)}(n), u)|.
\end{equation}
\item
Suppose that  $\om\in \mathcal P_B^*$.  Then  for any maximal path $M$ (and hence any  sequence $(\wt v_n = v_n(M))$), there exists an $n_0$ such that for any $n \geq n_0$,   $\sigma_{n}(\wt v_{n})$ is a singleton, and for
any $m >n> n_0$ and $u \in V_m$, one has
\begin{equation}\label{equal cardinalities 2}
|\wt E(W_{\wt v_{n-1}}(n), u)| = |\ol E(W'_{\sigma_{n-1}(\wt v_{n-1})}(n), u)|.
\end{equation}
\end{enumerate}
\end{corollary}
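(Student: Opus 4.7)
The plan is to derive both parts directly from Corollary \ref{necessary condition general} by showing that in each of the two settings the sum over $\ol v \in \sigma_{n-1}(\wt v)$ in equation (\ref{general equal cardinalities two}) collapses to a single summand. Once the sum has a unique nonzero term corresponding to $\ol v = \sigma_{n-1}(\wt v)$, equation (\ref{general equal cardinalities two}) reads
\[ |\wt E(W_{\wt v}(n), u) \cap \varphi_\om^{-1}(\ol E(W'_{\sigma_{n-1}(\wt v)}(n), u))| = |\wt E(W_{\wt v}(n), u)|, \]
which forces $\varphi_\om(\wt E(W_{\wt v}(n), u)) \subseteq \ol E(W'_{\sigma_{n-1}(\wt v)}(n), u)$; since $\varphi_\om$ is a bijection, dually (\ref{general equal cardinalities one}) then gives the reverse inclusion of cardinalities, yielding the desired equality (\ref{equal cardinalities}) (respectively (\ref{equal cardinalities 2})).

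For part (1), I would invoke the observation already made in the paragraph preceding Definition \ref{correspondence_map}: when $\om \in \mathcal P_B \cap \mathcal O_B(j)$, for all sufficiently large $n$, each $\sigma_n : \wt V_n \to 2^{\ol V_n}$ is in fact a bijection $\wt V_n \to \ol V_n$ between sets of cardinality $j$. After telescoping further if necessary to make this hold uniformly in $n \geq n_0$ (compatibly with the well-telescoping assumption), the sum in (\ref{general equal cardinalities two}) for every $\wt v \in \wt V_{n-1}$ collapses to the single term indexed by $\sigma_{n-1}(\wt v)$, and (\ref{equal cardinalities}) follows.

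For part (2), the definition of $\mathcal P_B^*$ supplies, for each fixed maximal path $M$, an integer $n_0 = n_0(M)$ such that $\sigma_n(v_n(M))$ is a singleton for every $n \geq n_0$. Writing $\wt v_n = v_n(M)$ and applying Corollary \ref{necessary condition general} with $\wt v = \wt v_{n-1}$ (where $n \geq n_0 + 1$), the sum reduces to the unique term $\ol v = \sigma_{n-1}(\wt v_{n-1})$, and (\ref{equal cardinalities 2}) follows exactly as above.

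The main obstacle is essentially bookkeeping: in part (1) one must ensure the telescoping making $\sigma_n$ a bijection for large $n$ is compatible with the well-telescoping hypothesis of Corollary \ref{necessary condition general}, which amounts to intersecting two cofinal sequences of levels. In part (2) the subtlety is that the threshold $n_0$ depends on the chosen maximal path $M$ rather than being uniform in $\wt V_{n-1}$, so the conclusion is correctly stated only for the maximal vertices lying along a preassigned $M$; any attempt to obtain a uniform bound would require additional hypotheses beyond membership in $\mathcal P_B^*$.
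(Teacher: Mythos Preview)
Your proposal is correct and matches the paper's implicit approach: the paper offers no explicit proof, merely presenting the corollary as a refinement of Corollary~\ref{necessary condition general} in the two special cases, and you have filled in exactly the intended argument---namely, that the singleton hypothesis on $\sigma_{n-1}(\wt v)$ collapses the sums in (\ref{general equal cardinalities two}) and (\ref{general equal cardinalities one}) to single terms.

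One small point worth making explicit in part~(2): your ``dually'' step, where (\ref{general equal cardinalities one}) yields the reverse inequality, requires that $\sigma_{n-1}^{-1}(\ol v)$ also be a singleton for $\ol v=\sigma_{n-1}(\wt v_{n-1})$. This follows from the \emph{second} clause in the definition of $\mathcal P_B^*$ applied to the minimal path $m=\varphi_\om(M)$ (whose vertex sequence is $\ol v_n=\sigma_n(\wt v_n)$), but you invoke only the first clause. Taking $n_0$ large enough for both $M$ and $\varphi_\om(M)$ closes this; it is the same kind of bookkeeping you already flag.
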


Given the incidence matrices $(F_n) $ for $B$,
where $F_n = \{(f_{u,w}^{(n)}):   u \in V_{n+1}, w \in V_{n}\},$ we
 define the sequences of modified incidence matrices $(\wt F_n)$ and $(\ol F_n)$ as in Section 4 of  \cite{bky}.
Namely, define  $\wt F_n = (\wt f_{u,w}^{(n)})$ and $\ol F_n = (\ol f_{u,w}^{(n)})$ by the following rule (here $w\in V_{n}, \ u\in V_{n+1}$ and $  n\geq 1$):
\begin{equation}\label{wt f}
\wt f_{u,w}^{(n)} = \left\{
                      \begin{array}{ll}
                        f_{u,w}^{(n)} - 1 &  \mbox{ if } \wt e_u\in E(w, u) \mbox{ and }\\
                        f_{u,w}^{(n)}  & \mbox{otherwise},
                      \end{array}
                    \right.
\end{equation}
and
\begin{equation}\label{ol f}
\ol f_{u,w}^{(n)} = \left\{
                      \begin{array}{ll}
                        f_{u,w}^{(n)} - 1 &  \mbox{ if }\ol e_u \in E(w, u) \mbox{ and } \\
                        f_{u,w}^{(n)}, & \mbox{otherwise}.
                      \end{array}
                    \right.
\end{equation}

Relation  (\ref{general equal cardinalities two}) implies that  for each $u\in V_{n+1}$ and each $\wt v \in \wt V_{n-1}$, if $w\in W_{\wt v}(n)$, then
each $\wt f_{u,w}^{(n)}$ can be written as
\begin{equation}
\label{general equal cardinalities two a}
\wt f_{u,w}^{(n)}= \sum_{\ol v\in \sigma_{n-1}(\wt v)} \wt f_{u,w,\ol v}^{(n)}
\end{equation}

where  if if $w\in W_{\wt v}(n)$, we define $\wt f_{u,w,\ol v}^{(n)}$ to be the number of non-maximal edges with source $w$, range $u$, and whose successor edge belongs to $\ol v\in \sigma_{n-1}(\wt v)$.

Relation (\ref{general equal cardinalities one}) says that for each $u \in V_{n+1}$ and  each $\ol v\in \ol V_{n-1}$,

\begin{equation}\label{general equal cardinalities one a}
\sum_{ \wt v: \ol v \in \sigma_{n-1}(\wt v)      }    \sum_{w\in W_{\wt v}(n)}\wt f_{u,w,\ol v}^{(n)}= \sum_{ w'\in W_{\ol v }'(n)}  \ol f_{u,w'}^{(n)} \, .
\end{equation}

We will  refer to the relations in (\ref{general equal cardinalities one a}) above as the {\em balance relations}.
 If $\om $ is perfect and has finitely many extremal paths, then the balance relations have the following form: for $n>n_0$, $\wt v \in \wt V_{n-1}$ and  $u \in V_{n+1}$:
\begin{equation}\label{sums of entries}
\sum_{w \in W_{\wt v}(n) }\wt f_{u,w}^{(n)} = \sum_{w'\in W'_{\sigma_{n-1}(\wt v)}(n)} \ol f _{u, w'}^{(n)}, \ \ \ u\in V_{n+1}.
\end{equation}
If $\om\in \mathcal P_B^*$ and the maximal path $M$ is given, then there exists $n_0$ such that for each $n>n_0$, if $v_{n-1}(M)=\wt v$ and $u\in V_{n+1}$, then relation (\ref{sums of entries}) is satisfied.

The content of  Theorem \ref{analogous condition}  is that given a skeleton and correspondence on $B$,
 relations (\ref{general equal cardinalities two a}) and (\ref{general equal cardinalities one a}) are sufficient conditions on the incidence matrices of a Bratteli diagram, in order that it supports a perfect order $\om$.
Our proof is constructive in that given a diagram, skeleton and correspondence,we use an algorithm to define, for each $u\in V_{n+1}$ and $n\in \N$, the word $w(u,n,n+1)$ - i.e. we order the set $r^{-1}(u)$. We do this by constructing a path in $\mathcal H_n$ that starts in $[\ol v_0, \wt v_0, n]$, where
$s(\ol e_u) \in [\ol v_0, \wt v_0, n]$, terminates at $[\ol v_t, \wt v_t, n]$, where $s(\wt e_u) \in [\ol v_t, \wt v_t, n]$, and passes through each vertex in $\mathcal H_n$ a prescribed number of times that we now make precise.

 Proposition \ref{connectedness and goodness} tells us that we have to assume that the directed graphs $\mathcal H_n$ are strongly connected. We make clear what we mean by this as follows.
Fix $n\in \N$ and $u\in V_{n+1}$. If
  $[\ol v, \wt v,n] \in \mathcal H_n$, we associate a number $P_u([\ol
    v, \wt v,n]):=\sum_{w\in[\ol v,\wt v, n]}\wt f_{u,w}^{(n)}$ to the
  vertex $[\ol v ,\wt v,n]$. This {\em crossing number} represents the
  number of times that we will have to pass through the vertex $[\ol
    v,\wt v,n]$ when we define an order on $r^{-1}(u)$, and here we
  emphasize that if we terminate at $[\ol v,\wt v,n]$ , we do not
  consider this final visit as contributing to the crossing number -
  this is why we use the terms $\wt f_{u,w}^{(n)}$, and not
  $f_{u,w}^{(n)}$. We say that $\mathcal H_n$ is {\em positively strongly
    connected}
    if for each $u\in V_{n+1}$, the set of vertices $\{ [\ol v,
    \wt v,n]: P_u([\ol v,\wt v,n])>0 \}$, along with all the relevant
  edges of $\mathcal H_n$, form a strongly connected subgraph of
  $\mathcal H_n$. If $s(\wt e_u) \in [\ol v, \wt v, n]$ we shall call this
  vertex in $\mathcal H_n$ the {\em terminal vertex}, as when defining
  the order on $r^{-1}(u)$, we need a path that ends at this vertex
  (although it can obviously go through this vertex several times - in
  fact precisely $P_u([\ol v, \wt v,n])$ times).

\begin{example} In this example we drop the dependence on $n$ and consider the stationary diagram $B = (V,E)$ that was used above in Example \ref{perfect order by telescoping}.
Suppose that $V=\{a,b,c,d\}$, $\ol V = \wt V = \{a,b,c\}$, with $a \in [a,a]$, $b\in [b,b]$, $c\in [c,c]$ and $d\in [b,a]$. Let $\sigma(a)=b$, $\sigma(b)=c$ and $\sigma(c) =a$. Suppose that the incidence matrix $F$ of $B$ is
\[F:=
\left(
\begin{array}{cccc}
2& 1 & 1 &  1 \\
1& 2 & 1  & 1 \\
1 & 1 & 2  & 1 \\
1  & 1  &1 & 2  \\
 \end{array}
\right)
\]
Then if $u=d$, $P_d([a,a])=0$, and the remaining three vertices $[b,b]$, $[c,c]$ and $[b,a]$ do not form a strongly connected subgraph of $\mathcal H$, - for example there is no path from $[c,c]$ to $[b,a]$. Hence for us $\mathcal H$ is not positively strongly connected.

Note also that although the rows of this incidence matrix satisfy the balance relations, there is no way to define an order on $r^{-1}(d)$ so that the resulting global order is perfect. The lack of positive strong connectivity of the graph $\mathcal H$ is precisely the impediment.
\end{example}

The following theorem is our main result,  extending a similar result (Theorem 4.6) in  \cite{bky}.

\begin{theorem}\label{analogous condition}
Let $B$ be a Bratteli diagram with incidence matrices $(F_n)$.  Let $\mathcal F$ be a skeleton on $B$ and $\sigma$ an associated correspondence, such that the graphs $\mathcal H_n$ are all positively strongly connected.
Suppose that the collection of natural numbers
\begin{equation} \label{decomposition}
\{ \wt f_{u,w, \ol v }^{(n)}: u \in V_{n+1}, w \in W_{\wt v}(n),  \ol v \in \sigma_{n-1} (\wt v) \}, \ \ \wt v \in \wt V_{n-1},
\end{equation}
 is such that relations (\ref{general equal cardinalities two a}) and
(\ref{general equal cardinalities one a}) are satisfied.

Then there exists a perfect order $\om$ on $B$ having $\mathcal F$ and $\sigma$ as associated skeleton and correspondence, respectively.
Conversely, suppose that a perfect $\om$ has accompanying skeleton and correspondence
$(\mathcal F, \sigma)$. Then there exists a set of natural numbers as in (\ref{decomposition}) such that relations (\ref{general equal cardinalities two a}) and      (\ref{general equal cardinalities one a})  hold.
\end{theorem}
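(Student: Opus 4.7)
The statement has two implications. I would begin with the easier ``only if'' direction. Given a perfect $\om$ with skeleton $\mathcal F$ and correspondence $\sigma$, I define
\[
\wt f^{(n)}_{u,w,\ol v} := |\{e \in E(w,u) : e \neq \wt e_u,\ s(\varphi_\om(e)) \in W'_{\ol v}(n)\}|
\]
for each $\wt v \in \wt V_{n-1}$, $w \in W_{\wt v}(n)$, $\ol v \in \sigma_{n-1}(\wt v)$, and $u \in V_{n+1}$. Lemma \ref{successor} ensures that each non-maximal edge from $W_{\wt v}(n)$ to $u$ is accounted for by exactly one $\ol v \in \sigma_{n-1}(\wt v)$, so summing over such $\ol v$ recovers $\wt f^{(n)}_{u,w}$, which is (\ref{general equal cardinalities two a}). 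Summing the same family of edges the other way---over $w \in W_{\wt v}(n)$ and over the $\wt v$ with $\ol v \in \sigma_{n-1}(\wt v)$---and invoking the bijectivity of $\varphi_\om$ together with (\ref{general equal cardinalities one}) of Corollary \ref{necessary condition general} gives $\sum_{w'\in W'_{\ol v}(n)} \ol f^{(n)}_{u,w'}$, which is (\ref{general equal cardinalities one a}).

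For the harder forward direction, my plan is to build the order level by level: for each $n$ and each $u \in V_{n+1}$ I algorithmically produce the word $w(u,n,n+1) \in V_n^+$ ordering $r^{-1}(u)$ as the label of an appropriate walk in $\mathcal H_n$. The construction is Eulerian. I set up an auxiliary directed multigraph $G_u$ on the vertex set of $\mathcal H_n$ whose edges encode the prescribed transitions. Since the data (\ref{decomposition}) only refines out-flow by the target's $\ol v'$-coordinate while $\mathcal H_n$ distinguishes targets by both $(\ol v', \wt v')$, my first step is to further refine (\ref{decomposition}) to the individual vertex level: distribute each block of $\sum_{w \in [\ol v, \wt v, n]} \wt f^{(n)}_{u,w,\ol v'}$ outgoing edges from $[\ol v, \wt v, n]$ among the targets $[\ol v', \wt v', n]$ so that the in-degree at $[\ol v', \wt v', n]$ equals $\sum_{w' \in [\ol v', \wt v', n]} \ol f^{(n)}_{u,w'}$. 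The balance relation (\ref{general equal cardinalities one a}) asserts that the $\ol v'$-aggregated sums agree, so such an integer refinement exists by an elementary bipartite flow argument.

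Once $G_u$ is built, I invoke the directed Euler trail theorem. Positive strong connectedness of $\mathcal H_n$ makes the relevant subgraph connected, and the refined degree conditions balance in-degree and out-degree at every interior vertex, leaving a $+1$ out-excess at the designated initial vertex $[\ol v_0,\wt v_0,n]\ni s(\ol e_u)$ (accomplished by using $\ol f^{(n)}_{u,w'}$ in place of $f^{(n)}_{u,w'}$ at the minimal edge) and a $+1$ in-excess at the terminal vertex $[\ol v_t,\wt v_t,n]\ni s(\wt e_u)$ (accomplished by using $\wt f^{(n)}_{u,w}$ in place of $f^{(n)}_{u,w}$ at the maximal edge). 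Thus $G_u$ carries an Eulerian trail from $[\ol v_0,\wt v_0,n]$ to $[\ol v_t,\wt v_t,n]$. Reading off a letter from $[\ol v,\wt v,n]$ at each visit---placing $\ol e_u$ first, $\wt e_u$ last, and using each $w$ a total of $f^{(n)}_{u,w}$ times---produces the word $w(u,n,n+1)$. By construction every subword corresponds to a valid path in $\mathcal H_n$, so Proposition \ref{perfect_order_characterisation}(2) certifies that the resulting global order $\om$ is perfect with the prescribed $(\mathcal F,\sigma)$.

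The main obstacle, in my view, is the interaction between the refinement step and the Eulerian existence argument: the hypothesis (\ref{decomposition}) is deliberately coarse, so one must show a feasible refinement meeting the prescribed in-degrees exists before the Eulerian machinery applies, and then track the start/terminal degree adjustments carefully. A further subtlety is that the terminal (or initial) vertex may have zero crossing number and therefore fall outside the positively-connected subgraph; this I would handle by regarding it as an endpoint reached by a single incoming or outgoing arc and applying the Eulerian theorem to the appropriate augmented subgraph. Once these bookkeeping issues are resolved, the remaining steps---reading off letters along the trail, checking compatibility with $(\mathcal F,\sigma)$ at every level, and citing Proposition \ref{perfect_order_characterisation}---are essentially routine.
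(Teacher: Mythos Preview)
Your converse direction is essentially what the paper does: the discussion preceding the theorem (Lemma \ref{successor} and Corollary \ref{necessary condition general}) already derives (\ref{general equal cardinalities two a}) and (\ref{general equal cardinalities one a}) from a perfect order, and your explicit formula for $\wt f^{(n)}_{u,w,\ol v}$ is the intended one.

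For the forward direction you take a genuinely different route. The paper constructs $w(u,n,n+1)$ by an explicit greedy walk in $\mathcal H_n$: start at $[\ol v_0,\wt v_0,n]$, at each step move to the available successor whose remaining count is largest, verify after each step that the reduced balance relations still hold so the walk can continue, and treat loops in $\mathcal H_n$ as a separate case (Case~II) by exhausting a loop completely the first time it is reached. You instead package the problem as a directed Euler-trail existence question on an auxiliary multigraph $G_u$ and invoke the classical theorem. This is conceptually cleaner and sidesteps the ad hoc loop-handling, at the price of an extra refinement step that the paper does not need because its greedy walk makes all target choices on the fly.

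There is, however, a real gap in your Eulerian argument. The balance relations give $G_u$ the correct degree sequence (in-degree equals out-degree at interior vertices, with the expected $\pm 1$ at the endpoints), but the Euler-trail theorem also requires $G_u$ to be connected on its support. Positive strong connectedness is a hypothesis about $\mathcal H_n$, not about $G_u$: it says the vertices with $P_u>0$ are strongly connected by edges of $\mathcal H_n$, but your refinement may assign zero multiplicity to some of those $\mathcal H_n$-edges and leave $G_u$ a disjoint union of the desired trail and stray Eulerian circuits. You must argue that among all integer refinements meeting the in-degree constraints there is one making $G_u$ connected; this is where positive strong connectedness actually does work, and the argument (rerouting multiplicity between components along $\mathcal H_n$-paths while preserving all degree balances) is not the ``elementary bipartite flow argument'' you cite for mere existence of a refinement. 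Once that step is supplied, your approach goes through and Proposition \ref{perfect_order_characterisation}(2) finishes as you say.
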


\begin{proof}
As the preceding discussion deals with the necessity a perfect order having to satisfy  relations (\ref{general equal cardinalities two a}) and    (\ref{general equal cardinalities one a}), we prove here only the sufficiency of these relations.

  Our goal is to define a linear order on $r^{-1}(u)$ for each $u\in V_{n+1}$ and $n> 1$ - in other words to define $w(u,n,n+1)$ - so that the corresponding partial ordering $\om$ on $B$ is perfect. Recall that each set  $r^{-1}(u)$ contains two pre-selected edges $\wt e_u, \ol e_u$ and they should be the maximal and minimal edges in $r^{-1}(u)$ after defining $w(u,n,n+1)$.

Our proof is based on an inductive procedure that is applied to each row of the incidence matrices. We first describe in details the first step of the procedure that will be applied repeatedly. It will be seen from our proof that for given $B$, $\mathcal F$ and $\sigma$,  neither is the word  $w(u,n,n+1)$ that we define   unique, nor  will our procedure give all possible valid words.

We will first consider the particular case when the associated graphs $\mathcal H = (\mathcal H_n)$ defined by $\mathcal F$ do not have loops. After that, we will modify the construction to include possible loops.

 Case I: There are no  loops in the graphs $\mathcal H_n$. To begin with, we take some
 $u\in V_{n+1}$ and consider the $u$-th rows of matrices $\ol F_n$ and
 $\wt F_n$. They coincide with the row  $(f_{u,v_1}^{(n)},..., f_{u, v_d}^{(n)})$ of the matrix  $F_n$ except only one entry either corresponding to $|E(s(\ol e_u),
 u)|$ and $|E(s(\wt e_u), u)|$ in $\ol F_n$ and
 $\wt F_n$, respectively.  Take $\ol e_u$ and
 assign the number $0$ to it, i.e. $\ol e_u$ is the minimal edge in
 $r^{-1}(u)$. Let $[\ol v_0, \wt v_0,n] $ be the vertex\footnote{The same word
 `vertex' is used in two meanings: for elements of the set $T_n$ of the
 graph $\mathcal H_n$ and for elements of the set $V_n$ of the Bratteli
 diagram $B$. To avoid any possible confusion, we point out explicitly
 what vertex is meant in the context.} of $\mathcal H_n$ such that
 $s(\ol e_u) \in [\ol v_0, \wt v_0, n]$. Consider the set
\[\{ \wt f_{u,w}^{(n)} : w \in [\ol v, \wt v, n]: \ol v\in \sigma_{n-1}(\wt v_0) \}\, ,\]
and let $\wt f_{u,w'}^{(n)}$ be the maximum of this set,
 where $w'\in [\ol v_1,  \wt v_1 , n]$.
If there are several
 entries that are the maximal value, we chose one arbitrarily
 amongst them. Take any edge $e_1 \in E(w', u)$. In the case where  $\wt e_u \in
 E(w', u)$, we choose $e_1 \neq \wt e_u$. Assign the number $1$ to $e_1$
 so that $e_1$ becomes the successor of $e_0= \ol e_u$.

Two edges were labeled in the above
procedure, $e_0$ and $e_1$. We may think of this step as if these
edges were `removed' from the set of all edges in $r^{-1}(u)$.
In the collection of relations (\ref{general equal cardinalities one a})
we have worked with the relation
defined by $u$ and $\ol v_1$. On the left hand side,
the entry $\wt f_{u,s(\ol e_u), \ol v_1}^{(n)}$ was reduced by 1, and on the right hand side, $\ol f_{u,w'}^{(n)}$ was reduced by 1. We need to verify that neither side was reduced by more than 1, i.e.
we claim that
the remaining non-enumerated edges satisfy the relation
\begin{equation}\label{sum condition for v_0}
\sum_{ \wt v: \ol v_1 \in \sigma_{n-1}(\wt v)      }    \sum_{w\in W_{\wt v}(n)}\wt f_{u,w,\ol v}^{(n)}    -1 = \sum_{ w'\in W_{\ol v_1 }'(n)}  \ol f_{u,w',}^{(n)} -1 \, .
\end{equation}

The choice of $w'\in [\ol v_1, \wt v_1,n]$ actually
 means that we take the edge from $[\ol v_0, \wt v_0,n]$ to $[\ol  v_1, \wt v_1,n]$ in the associated graph $\mathcal H_n$.
Note that
 $\ol  v_1 \not\in  \wt \sigma_{n-1}(\wt v_1)$, otherwise there would be a
loop at $[\ol v_1, \wt v_1,n]$ in $\mathcal H_n$, a contradiction to our assumption.
 This is why there is exactly one edge removed in  each side
 of  (\ref{sum condition for v_0})
 so that our resulting row
 still satisfies  (\ref{general equal cardinalities one a}).
This completes the first step of the construction.

We are now at the vertex $[\ol v_1, \wt v_1,n]$ in $\mathcal H_n$. To repeat the above procedure,
note that we now have a  `new', reduced $u$-th row of $F_n$ - namely, the entry $f_{\ol v_0}{(n)}$ has been reduced by one. Thus the crossing numbers of the vertices of $\mathcal H_n$ change (one crossing number is reduced by one). Also note that in this new reduced row, $\ol f_{u,w'}^{(n)}= f_{u,w'}^{(n)} -1$; in other words, with each step of this algorithm the row we are working with changes, and the vertex $w$ such that $\ol f_{u,w}^{(n)}= f_{u,w}^{(n)} -1$ changes.  For, the vertex such that $\ol f_{u,w}^{(n)}= f_{u,w}^{(n)} -1$  belongs to the vertex in $\mathcal H_n$ where we currently are, and this changes at every step of the algorithm.
Let us assume that all crossing numbers are still positive for the time being to describe the second step of the algorithm.

We apply the above described procedure again, this time to $w'=s(e_1)$, to
show how we should proceed to complete the next step.
Consider the set
\[\{ \wt f_{u,w}^{(n)} : w \in [\ol v, \wt v, n]: \ol v\in \sigma_{n-1}(\wt v_1)  \}\, ,\]
and let $\wt f_{u,w''}^{(n)}$ be the maximum of this set,
 where $w''\in [\ol v_3,  \wt v_3 , n]$. Once again,
if there are several
 entries that are the maximal value, we chose one arbitrarily
 amongst them. Take any edge $e_2 \in E(w'', u)$. In the case where  $\wt e_u \in
 E(w'', u)$, we choose $e_2 \neq \wt e_u$. Assign the number $2$ to $e_2$
 so that $e_2$ becomes the successor of $e_1$.

We note that in the collection of relations (\ref{general equal cardinalities two a}), indexed by
the vertices $u$, $\wt v_1$ and $w'=s(e_1)$, one entry was `removed' from each side of the relation:
on the right hand side, the entry $\wt f_{u,  w', \ol v_2}^{(n)}$ was reduced by 1.

In the collection of relations (\ref{general equal cardinalities one a})
we have worked with the relation
defined by $u$ and $\ol v_2$. On the left hand side,
the entry $\wt f_{u,w', \ol v_2}^{(n)}$ was reduced by 1, and on the right hand side, $\ol f_{u,w''}^{(n)}$ was reduced by 1.
 As we saw in  (\ref{sum condition for v_0}), the relevant relation in (\ref{general equal cardinalities one a}) becomes
\begin{equation}\label{sum condition for v_1}
\sum_{ \wt v: \ol v_2 \in \sigma_{n-1}(\wt v)      }    \sum_{w\in W_{\wt v}(n)}\wt f_{u,w,\ol v}^{(n)}    -1 = \sum_{ w'\in W_{\ol v_2 }'(n)}  \ol f_{u,w'}^{(n)} -1 \, .
\end{equation}

We remark also that the choice that we made of $w''$ (or $e_2$) allows
us to continue the existing path (in fact, the edge) in $\mathcal H_n$
from $[\ol v_0, \wt v_0, n]$ to $[\ol v_1, \wt v_1,n]$ with the edge from $[\ol v_1, \wt v_1,n]$ to
$[\ol v_2 , \wt v_2,n]$, where $\wt v_2$ is defined by the property that
$s(e_2) \in [\ol v_2, \wt v_2 , n]$.

This process can be continued. At each step we apply the following rules:

(1) the edge $e_i$, that must be chosen next after $e_{i-1}$, is taken
from the set $E(w^*, u)$ where $w^*$ is such that $f_{u,w^*}^{(n)}$
is maximal
amongst $f_{u,w}^{(n)}$, as $w$ runs over $[\ol v, \wt v,n]$ where $\ol v\in \sigma_n (\wt v_{i-1})$, and

(2) the edge $e_i$ is always taken not equal to $\wt e_u$ unless no more
 edges except $\wt e_u$ are left.

After every step of the construction, we see that the following statements hold.

(i)  Relations (\ref{general equal cardinalities two a}), (with $\wt v= \wt v_i$)  and (\ref{general equal cardinalities one a}) (with $\ol v = \ol v_i$) remain true when we treat them as the number of non-enumerated edges left in $r^{-1}(u)$. In other words, when a pair of vertices $\wt v_i$ and $\ol v_i$ is considered, we reduce by 1 each side of the relevant relations.

(ii) The used procedure allows us to build a path $p$ from the starting
vertex $[\ol v_0, \wt v_0,n]$ going through other vertices of the graph
$\mathcal H_n$ according to the choice we make at each step. We need to guarantee that at each step, we are able to move to a vertex in $\mathcal H_n$ whose crossing number is still positive (unless we are at the terminal stage). As long as the crossing numbers of vertices in  $\mathcal H_n$ are positive, there is no concern. Suppose thought that we land at a (non-terminal) vertex $[\ol v, \wt v, n]$ in $\mathcal H_n$  whose crossing number is one (and this is the first time this happens). When we leave this vertex, to go to $[\ol v', \wt v' ,n]$,  the crossing number for $[\ol v, \wt v, n]$ will become 0 and therefore it will no longer be a vertex of $\mathcal H_n$. {\em Thus at this point, with each step, the graph $\mathcal H_n$ is also changing (being reduced).} We need to ensure that there is a way to continue the path out of $[\ol v', \wt v' ,n]$.
 Since
$$
\sum_{w\in W_{\wt v'}(n) } \wt f_{u,w}^{(n)}\geq      P_u[\ol v', \wt v' ,n]\geq 1,
$$
then  for some $\ol v \in \sigma_{n-1}(\wt v')$,
 $$\sum_{ \wt v: \ol v \in \sigma_{n-1}(\wt v) }    \sum_{w\in W_{\wt v}(n)}\wt f_{u,w,\ol v}^{(n)} \geq 1,
$$
 so that by the balance relations, $ \sum_{ w'\in W_{\ol v }'(n)}  \ol f_{u,w'}^{(n)} \geq 1 \, $.
 If the crossing number of all the vertices $[\ol v, *,n]$ have been reduced to 0, then this means that $\sum_{w' \in W_{\ol v}(n) } \ol f_{u,w'}^{(n)}= 1$,
 this tells us that we have to move into the terminal vertex for the last time.
Then the balance relations, which continue to be respected, ensure we are done. Otherwise,
 the balance relations guarantee that $\sum_{w' \in W_{\ol  v}(n) } \ol f_{u,w'}^{(n)}> 1$, which means there is a valid continuation of our path out of $[\ol v', \wt v' ,n]$ and to a new vertex in $\mathcal H_n$, and we are not at the end of the path. It is these balance relations which always ensure that the path can be continued until it reaches its terminal vertex.

(iii) In accordance with (i), the $u$-th row of $F_n$ is transformed by a sequence
of steps in such a way that entries of the obtained rows form decreasing
 sequences. These entries show the number of non-enumerated edges
 remaining  after the completed steps. It is clear that, by the rule used above, we decrease the largest entries first. It follows from the simplicity of the diagram that, for sufficiently many steps, the set $\{s(e_i)\}$ will contain all vertices $v_1,..., v_d$ from $V_n$. This means that the transformed $u$-th row consists of entries which are strictly less than those of $F_n$. After a number of steps the $u$-th row will have a form where the difference between any two entries is $\pm 1$. After that, this property will remain true.

(iv) It follows from (iii) that we finally obtain that all entries of
 the resulting  $u$-th row are zeros or ones. We apply the same procedure to enumerate the remaining edges from $r^{-1}(u)$ such that the number $|r^{-1}(u)| - 1$ is assigned to the edge $\wt e_u$. This means that we  have constructed the word $W_u = s(\ol e_u)s(e_1) \cdots s(\wt e_u)$, ie we have ordered $r^{-1}(u)$.

Looking at the path $p$ that is simultaneously built in $\mathcal H_n$, we
 see that the number of times this path comes into and leaves  a vertex $[\ol v, \wt v,n]$
is precisely that vertex's crossing number $P_u ([\ol v, \wt v,n])$. The path  $p$ is an Eulerian path of $\mathcal H_n$ that finally arrives to the vertex of $\mathcal H_n$ defined by $s(\wt e_u)$.

Case II: there is a loop in $\mathcal H_n$. To deal with this case, we
have to refine the described procedure to avoid a possible situation
when the algorithm cannot be finished properly. Suppose that the graph
$\mathcal H_n$ has some loops.

We start as in Case I, and continue until we have arrived to a vertex $[\ol v_1, \wt v_1,n]$, where, for the first time,
$[\ol v_1, \wt v_1,n]$ has a successor  $[\ol v_2, \wt v_2,n]$ with a loop, ie $\ol v_2 \in \sigma_n (\wt v_2)$.
If $[\ol v_2, \wt v_2,n]$
 has  crossing number zero, - ie it is the terminal vertex - and we are not at the terminal stage of defining the order, we ignore this vertex and continue as in Case I.
If $[\ol v_2, \wt v_2,n]$
 has a positive crossing number, i.e.  $P_u (  [\ol v_2, \wt v_2,n]   )>0  $,  then at this point, we continue the path to
  $[\ol v_2, \wt v_2,n]$ , and then
traverse this loop
$(\sum_{w \in [\ol v_2, \wt v_2,n]} \wt f_{u,w,\ol v_2}^{(n)} )-1$ times.
 This means we are traversing this loop enough times that it is effectively no longer part of the resulting $\mathcal H_n$ that we have at the end of this step - we will no longer need, or even be able, to traverse the loop.

 Looking at the relation
\begin{equation}\label{loop equation}
\sum_{ \wt v: \ol v_2 \in \sigma_{n-1}(\wt v)      }    \sum_{w\in W_{\wt v}(n)}\wt f_{u,w,\ol v_2}^{(n)}= \sum_{ w'\in W_{\ol v_2 }'(n)}  \ol f_{u,w'}^{(n)}, \,
\end{equation}
we see that by the time we have arrived at the vertex $[\ol v_2, \wt v_2,n] $, traversed it exactly  $(\sum_{w \in [\ol v_2, \wt v_2,n]} \wt f_{u,w,\ol v_2}^{(n)} )-1$   times, and left it,
we see that we
have removed  $\sum_{w \in [\ol v_2, \wt v_2,n]} \wt f_{u,w,\ol v_2}^{(n)} $  from each side of (\ref{loop equation}).
  We consequently
 enumerate  all edges whose source lies in $[\ol v_2, \wt v_2, n] $ in {\em any} arbitrary  order.

  We also need to ensure that once we have  `removed' the loop at   $[\ol v_2, \wt v_2,n] $   from   the graph $\mathcal H_n$, we  do not disrupt future movement of our path, i.e. we do not disconnect $\mathcal H_n$ in a damaging way. To see this, suppose we have a loop at $[\ol v_2, \wt v_2,n]$ whose crossing number is positive. If $[\ol v_1, \wt v_1, n]$ is a (non-looped) vertex with a positive crossing number which has $[\ol v_2, \wt v_2,n]$ as a successor, then for some
$[\ol v_3, \wt v_3,n]\neq [\ol v_2,\wt v_2,n]$ with
$\ol v_3\in \sigma_{n-1} (\wt v_1)$, the vertex $[\ol v_3, \wt v_3,n]$ will (if we are not at the terminal stage) have a positive crossing number. This is because of our discussion above concerning (\ref{loop equation}): the crossing number at the looped vertex appears  on both sides, and cancels. So if $[\ol v_1, \wt v,n]$ has a positive crossing number, this contributes positive values to the left hand side of (\ref{loop equation}); and so there  is some vertex $[\ol v_3, \wt v_3,n]$ with a positive value on the right hand side.  All this means that we are able to continue our path out of the looped vertex $[\sigma_n(\wt v), \wt v,n] $ when we arrive there at some future stage.

 We now revert to the old procedure. We are at the vertex
  $[\ol v_2, \wt v_2,n] $  in $\mathcal H_n$. If none of its successors have a loop, we revert to the algorithm in Case I, and continue with that algorithm until we reach a vertex in $\mathcal H_n$ one of whose successors has a loop, and then repeat the procedure described in Case II. We continue until we have defined the order on $r^{-1} (u)$.
To summarize the general procedure, we notice that, constructing the Eulerian
 path $p$, the following rule is used: as soon as $p$ arrives before a loop
 around a vertex  in $\mathcal H_n$, then $p$ makes as many loops
 around that vertex as needed so that this loop never needs or can be used again.Then $p$
 leaves the looped vertex and proceeds to a vertex  according to the procedure  in
 Case I, or, if there is as a  follower a vertex with a loop, according to the procedure in Case II.

As noticed above, the fact that all edges $e$ from $r^{-1}(u)$ are enumerated is equivalent to defining a word formed  by the sources of $e$. In our construction, we obtain the word $w(u, n, n+1)= s(\ol e_u) s(e_1)\cdots s(e_j)\cdots \cdots s(\wt e_u)$.

Applying these arguments to every vertex $u$ of the diagram, we define
 an ordering $\om$ on $B$. That $\om$ is perfect follows from Lemma
 \ref{perfect_order_characterisation}: we chose $\om$ to have skeleton $\mathcal F$,
 and for each $n$, constructed all words $w(v,n,n+1)$  to correspond to
 paths in $\mathcal H_n$. The result follows.
\end{proof}

\begin{example}\label{basic_infinite_rank_example_order} We continue with Examples \ref{basic_infinite_rank_example} and \ref{basic_infinite_rank_correspondence}, defining an order on $r^{-1}(v_2)$ where $v_2 \in V_3$, if $(f_{v_2,v_1}^{(3)},f_{v_2,v_2}^{(3)},f_{v_2,v_3}^{(3)})=(1,2,1)$. In what follows we drop the superscript $(3)$. This simple example illustrates why loop in the graphs $\mathcal H_n$ can cause a problem. The graph $\mathcal H = \mathcal H_3$ is shown in Figure \ref{H_3}; recall that $v_1\in [v_1,v_1]$, $v_2 \in [v_2,v_2]$ and $v_3 \in [v_1,v_2]$. Since all the maps $\sigma_n$ are point maps, the
system of relations (\ref{general equal cardinalities two a}) becomes trivial.
The balance relations (\ref{general equal cardinalities one a}) become
\[\wt f_{v_2,v_1}= \ol f_{v_2,v_2},  \mbox{ and }
\wt f_{v_2,v_2} + \wt f_{v_2,v_3}= \ol f_{v_2,v_1}+ \ol f_{v_2,v_3}  \]
respectively, and our vector $(1,2,1)$ satisfies these constants. The only valid choice of an ordering of  $r^{-1}(v_2)$ obtained using our algorithm is  $w(v_2,2,3)= v_2v_3v_1v_2$, and in fact this is the only valid ordering possible.
\end{example}

\begin{figure}
\centerline{\includegraphics[scale=0.9]{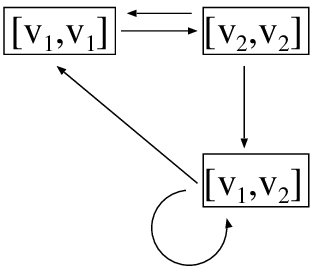}}
\caption{The graph $H_3$ for Example \ref{basic_infinite_rank_example_order}}\label{H_3}
 \end{figure}

\begin{example} \label{basic_set_map_correspondence_example_order} We continue with Example \ref{basic_set_map_correspondence_example}, and illustrate how to define an order on $r^{-1}(v_1)$ where $v_1 \in V_4$, if $(f_{v_1,v_1}^{(4)},f_{v_1,v_2}^{(4)},f_{v_1,v_3}^{(4)},f_{v_1,v_4}^{(4)})=(4,2,2,3)$. In what follows we drop the superscript $(4)$. The graph $\mathcal H = \mathcal H_4$ is shown in Figure \ref{H_4}; recall that $v_1\in [v_1,v_1]$, $v_2 \in [v_2,v_1]$, $v_3 \in [v_2,v_2]$ and $v_4 \in [v_3,v_3]$.   The (nontrivial part of the) system of relations
(\ref{general equal cardinalities two a})
become
\[  \wt f_{v_1,v_1}=  \wt f_{v_1,v_1,v_2}+ \wt f_{v_1,v_1,v_3}  \mbox{ and }\wt f_{v_1,v_2}=  \wt f_{v_1,v_2,v_2}+ \wt f_{v_1,v_2,v_3}, \]and the balance relations (\ref{general equal cardinalities one a}) become, with $\ol v=v_1, \,\,  v_2$ and $v_3$
\[\wt f_{v_1,v_4}= \ol f_{v_1,v_1}, \,\,  \wt f_{v_1,v_1,v_2} +  \wt f_{v_1,v_2,v_2} = \ol f_{v_1,v_3} + \ol f_{v_1,v_2} \mbox{ and }
\wt f_{v_1,v_1,v_3} + \wt f_{v_1,v_3}+ \wt f_{v_1,v_2,v_3} = \ol f_{v_1,v_4}  \]
respectively.
If we let
\[  \wt f_{v_1,v_1}=  \wt f_{v_1,v_1,v_2}+ \wt f_{v_1,v_1,v_3}  = 2+1 \mbox{ and }\wt f_{v_1,v_2}=  \wt f_{v_1,v_2,v_2}+ \wt f_{v_1,v_2,v_3} = 2+0, \]
then the balance relations are satisfied. A valid choice of an ordering on $r^{-1}(v_1)$ obtained using our algorithm is  $w(v_1,3,4)= v_1v_2^2v_3v_4v_1 v_3v_4 v_1v_4 v_1$; the other is $w(v_1,3,4)= v_1v_2^2v_3v_4v_1 v_4 v_1 v_3 v_4 v_1$. Note that there are other valid choices of orderings on $r^{-1}(v_1)$, but they are not achieved with this algorithm.
\end{example}

\begin{figure}
\centerline{\includegraphics[scale=0.9]{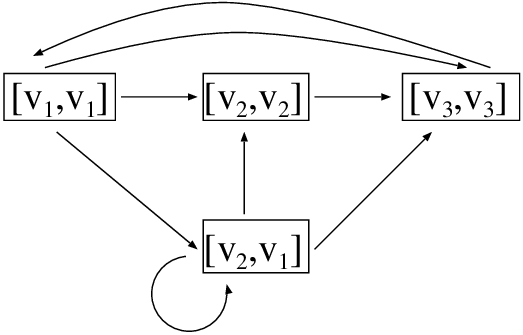}}
\caption{The graph $H_4$ for Example \ref{basic_set_map_correspondence_example_order}}\label{H_4}
 \end{figure}

As a corollary we identify the conditions needed so that a perfect order in $\mathcal P^*_B$ is supported by $B$.
 Note  that one can talk of a skeleton and correspondence $\mathcal F$ and $\sigma$  of being capable of generating orders that belong to $\mathcal P_B^*$: either all perfect orders $\om$ having skeleton $(\mathcal F, \sigma)$ belong to $\mathcal P_B^*$, or none of them do.

\begin{corollary}\label{analogous_special}
Let $B$ be a Bratteli diagram with incidence matrices $(F_n)$.  Let $\mathcal F$ be a skeleton on $B$ and $\sigma$ an associated correspondence that can  generate orders in $\mathcal P_B^*$, and
suppose that all associated graphs $\mathcal H_n$ are positively  strongly connected. Suppose also that for each $M$ (and hence each sequence $(\wt v_n = v_n(M))$), there exists an $n_0$ such that for any $n \geq n_0$,  any $m >n$, and any $u \in V_m$,
 the entries of incidence matrices $(F_n)$ satisfy condition (\ref{sums of entries}).
Then there is a perfect ordering $\om$ on $B$ such that $\mathcal F = \mathcal F_\om$ and the Vershik map $\varphi_\om$ satisfies the relation $\varphi_\om = \sigma$ on $X_{\max} (\mathcal F)$.
\end{corollary}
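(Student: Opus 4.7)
The plan is to derive this corollary from Theorem \ref{analogous condition} by checking that its hypotheses follow from those of the corollary. The key conceptual point is that, under the $\mathcal P_B^*$ assumption on $(\mathcal F,\sigma)$, the correspondence $\sigma_n$ eventually becomes a point map along each maximal path, which simultaneously collapses the decomposition (\ref{general equal cardinalities two a}) and reduces the balance relation (\ref{general equal cardinalities one a}) to the hypothesized condition (\ref{sums of entries}).

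First I would carry out a telescoping of $B$ to ensure that for every $n \geq 2$ and every $\wt v \in \wt V_{n-1}$, $\sigma_{n-1}(\wt v)$ is a singleton and that condition (\ref{sums of entries}) holds uniformly at every level. This is possible because at each fixed level $\wt V_n$ is finite; every $\wt v \in \wt V_n$ lies on at least one maximal path $M$, and for each such $M$ the $\mathcal P_B^*$ assumption together with the hypothesis provides a finite threshold $n_0(M)$ beyond which both the singleton property and (\ref{sums of entries}) hold. One proceeds level-by-level: starting from a level $n_k$, choose a representative maximal path through each of the finitely many vertices in $\wt V_{n_k}$, let $n_{k+1}$ exceed the largest of the corresponding $n_0$'s, and telescope the block of levels $(n_k,n_{k+1}]$ into a single level. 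Iterating this yields the desired telescoping.

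With this reduction in place, $\sigma_{n-1}(\wt v) = \{\ol v\}$ is a singleton for every $n \geq 2$ and every $\wt v \in \wt V_{n-1}$. The sum in (\ref{general equal cardinalities two a}) then has a single term, forcing $\wt f_{u,w,\ol v}^{(n)} = \wt f_{u,w}^{(n)}$ for $w \in W_{\wt v}(n)$. Substituting into (\ref{general equal cardinalities one a}) yields precisely (\ref{sums of entries}), which holds by hypothesis. Together with the positive strong connectivity of the graphs $(\mathcal H_n)$, this supplies all hypotheses of Theorem \ref{analogous condition}. That theorem then produces a perfect order $\om$ on $B$ with $\mathcal F_\om = \mathcal F$ and associated correspondence $\sigma$, and Proposition \ref{infinite_rank_perfect} ensures that $\varphi_\om$ agrees with $\sigma$ on $X_{\max}(\mathcal F)$, completing the proof.

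The main obstacle is the telescoping step, where one must achieve uniform singleton-ness of $\sigma_n$ on $\wt V_n$ and uniform validity of (\ref{sums of entries}) from hypotheses that are only given pointwise along individual maximal paths, with potentially infinitely many such paths and unbounded thresholds $\{n_0(M)\}$. The technical observation that makes this go through is the finiteness of $\wt V_n$ at each level: although the global set of thresholds is unbounded in general, only finitely many representative thresholds are relevant at each stage of the inductive telescoping, and the requirement that every maximal vertex be witnessed by some maximal path ensures that a representative always exists.
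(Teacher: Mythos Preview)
The paper gives no separate argument for this corollary; it is presented directly after Theorem~\ref{analogous condition} as an immediate specialization, so your plan of verifying the hypotheses of that theorem is the intended route.

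Your telescoping step, however, does not deliver what you claim. From $n_k$ you pick $n_{k+1}$ to exceed the thresholds $n_0(M_v)$ for one representative path $M_v$ through each $\wt v\in\wt V_{n_k}$. That only guarantees $\sigma_n(v_n(M_v))$ is a singleton for $n\ge n_{k+1}$ along those finitely many representatives. It says nothing about $\sigma_{n_k}$ on $\wt V_{n_k}$, which is the map actually appearing as $\sigma'_k$ in the telescoped diagram. Nor does it cover $\wt V_{n_{k+1}}$ at the next stage: maximal paths can branch between levels $n_k$ and $n_{k+1}$, so $\wt V_{n_{k+1}}$ will in general contain vertices not of the form $v_{n_{k+1}}(M_v)$ for any of your representatives, and at those vertices you have no control over $\sigma_{n_{k+1}}$.

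The obstacle you name in your last paragraph is genuine and is not resolved by the finiteness of each $\wt V_n$. There is a circularity: to force $\sigma_{n_k}$ to be a point map on $\wt V_{n_k}$ one would need $n_k$ already to dominate the thresholds attached to paths through $\wt V_{n_k}$, but $\wt V_{n_k}$ is only determined once $n_k$ is chosen. Under the $\mathcal P_B^*$ hypothesis alone it can happen that for every $n$ some vertex of $\wt V_n$ still has $|\sigma_n(\cdot)|\ge 2$ (each level carries a ``young'' maximal vertex whose threshold lies further out), so no telescoping of the kind you describe exists. A correct argument cannot proceed by reducing globally to the singleton case; one must either supply the decomposition (\ref{general equal cardinalities two a})--(\ref{general equal cardinalities one a}) directly at the set-valued vertices, or read the corollary's hypothesis as already providing enough data there.
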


In the remaining part of this section, we will consider the class of Bratteli diagrams $\mathcal A$ that is close, by its structure, to diagrams of finite rank. We refer to the notation in Definition \ref{aperiodic_definition}.

Take a Bratteli diagram $B$ and let the columns of $A_{n}^{(i)}$ be indexed by vertices $V_n^{(i)}$. Define
$X_B^{(i)} = \{x =(x_n) \in X_B: s(x_n)\in V_n^{(i)} \mbox{ for each } n  \}$, and let $X_B^{(a)}= X_B \backslash \bigcup_{i}X_B^{(i)}$. We will call $X_B^{(i)}$ the $i$-th minimal component of $X_B$ (here minimality is considered with respect to the tail equivalence relation $\mathcal E$). If $B \in \mathcal A$, and has incidence matrices of the form as in (\ref{aperiodic_matrix_form}) we will say that $B$ has {\em $k$ minimal components}.

Given a skeleton $\mathcal F$ on $B$, let $X_{\max}^{(i)} (\mathcal F)= X_{\max} (\mathcal F) \cap X_B^{(i)}$;
 define $X_{\min}^{(i)} (\mathcal F)$ similarly. Also, if $\om$ is an order on $B$, define $X_{\max}^{(i)} (\om)$ and $X_{\min}^{(i)} (\om)$ analogously. Note that for any statement that we make about a skeleton, we can  make an analogous statement about an order - simply consider the skeleton associated with the well-telescoped order.

The following lemma is straightforward.

\begin{lemma}\label{minimal_components}
Let $B\in \mathcal A$.
Then
\begin{enumerate}
\item
If $\mathcal F$ is a skeleton on $B$, then, for each $i$, the sets
$X_{\max}^{(i)} (\mathcal F)$ and $X_{\min}^{(i)} (\mathcal F) $ are closed.
\item
If $\om$ is a perfect order on $B$, then, for each $i$, $\varphi_\om: X_{\max}^{(i)}(\om) \rightarrow X_{\min}^{(i)}(\om)$ is a homeomorphism.
\end{enumerate}
\end{lemma}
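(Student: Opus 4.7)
The approach is to exploit the block structure of the matrices $F_n$ in $\mathcal A$: the vanishing off-diagonal blocks in the first $k$ block-rows force any path that visits the $i$-th diagonal component to stay there for all time, and the Vershik map inherits this invariance. The main observation will be that $X_B^{(i)}$ is itself a closed $\varphi_\om$-invariant subset, after which both parts follow almost immediately.

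First I would show that each $X_B^{(i)}$ is closed in $X_B$. For each $n$, the set $\{x \in X_B : s(x_n) \in V_n^{(i)}\}$ is a finite union of cylinders, hence clopen, and $X_B^{(i)}$ is the intersection of these sets over $n$. Intersecting with the closed sets $X_{\max}(\mathcal F)$ and $X_{\min}(\mathcal F)$ in the skeleton immediately gives Part (1).

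For Part (2), the heart of the argument is that $\varphi_\om(X_B^{(i)}) = X_B^{(i)}$. Take a non-maximal $x = (x_1, x_2, \ldots) \in X_B^{(i)}$, and write
\[
\varphi_\om(x) = (x_1^0, \ldots, x_{k-1}^0, \overline{x_k}, x_{k+1}, x_{k+2}, \ldots),
\]
where $k$ is the smallest index with $x_k$ not maximal. The tail $(x_{k+1}, x_{k+2}, \ldots)$ already lies in the $i$-th component by hypothesis. Because $r(\overline{x_k}) = r(x_k) = s(x_{k+1}) \in V_{k+1}^{(i)}$, and the block-row of $F_k$ indexed by $V_{k+1}^{(i)}$ has only its $(i,i)$-block non-zero, every edge with range in $V_{k+1}^{(i)}$ must have source in $V_k^{(i)}$; hence $s(\overline{x_k}) \in V_k^{(i)}$. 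Iterating this reasoning down the minimal initial segment $(x_1^0, \ldots, x_{k-1}^0)$ shows that each $x_j^0$ has source in $V_j^{(i)}$, so $\varphi_\om(x) \in X_B^{(i)}$. For maximal paths in $X_B^{(i)}$, continuity of $\varphi_\om$ together with closedness of $X_B^{(i)}$ extends the inclusion, and the symmetric argument applied to $\varphi_\om^{-1}$ gives the reverse inclusion.

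To finish, since $\varphi_\om: X_B \to X_B$ is a bijection with $\varphi_\om(X_{\max}(\om)) = X_{\min}(\om)$, the invariance of $X_B^{(i)}$ yields
\[
\varphi_\om(X_{\max}^{(i)}(\om)) = \varphi_\om(X_{\max}(\om) \cap X_B^{(i)}) = X_{\min}(\om) \cap X_B^{(i)} = X_{\min}^{(i)}(\om).
\]
Part (1) (or rather its analogue for skeletons generated by orders) says that both sets are closed, hence compact, and a continuous bijection between compact Hausdorff spaces is a homeomorphism. I expect the main obstacle to be the invariance claim $\varphi_\om(X_B^{(i)}) \subseteq X_B^{(i)}$: the formula for $\varphi_\om$ rewrites the entire initial segment $(x_1, \ldots, x_{k-1})$, and one must verify that the block structure not only preserves the new edge $\overline{x_k}$ but also propagates through the new minimal path below it; handling the boundary case of maximal paths requires invoking continuity rather than a direct computation.
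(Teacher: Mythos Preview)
Your argument is correct and is the natural one; the paper itself omits the proof, declaring the lemma ``straightforward,'' so there is nothing to compare against. One small point worth making explicit: in the continuity step for maximal $x\in X_B^{(i)}$, you need non-maximal approximants that stay \emph{inside} $X_B^{(i)}$, which amounts to $X_{\max}^{(i)}(\om)$ having empty interior in $X_B^{(i)}$; this holds because $X_B^{(i)}$ is the path space of the simple aperiodic subdiagram with strictly positive incidence matrices $(A_n^{(i)})$, so the restricted order is again regular. (Also, a harmless indexing slip: $r(\overline{x_k})=r(x_k)\in V_k^{(i)}$, not $V_{k+1}^{(i)}$.)
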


We use Proposition \ref{connectedness and goodness} to prove the following generalization of Proposition 3.26 in \cite{bky}.

\begin{proposition}\label{corollary_1}
Let $B\in \mathcal A$ have $k$ minimal components. Suppose that
 $C_{n}$ is a $d\times d$ matrix where $1\leq d\leq k-1$.
If $k=2$, then there are perfect orderings on $B$ only if $C_{n}=(1)$ for
all but finitely many $n$. If $k>2$, then there is no perfect ordering on $B$.

\end{proposition}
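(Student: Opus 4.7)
The plan is to suppose that a perfect order $\omega$ exists on $B$, and to derive the claimed conclusion by combining the weak connectivity of $\mathcal H_n$ given by Proposition \ref{connectedness and goodness}(2) with the balance relations (\ref{general equal cardinalities one a}) of Corollary \ref{necessary condition general}. After telescoping so that $\omega$ is well-telescoped with skeleton $\mathcal F$ and correspondence $\sigma$, I will introduce an auxiliary directed graph $G_n$ on the vertex set $\{1,\ldots,k,a\}$ whose edges record the types $(\alpha,\beta)$ of the partition elements $[\ol v,\wt v,n]$ of $\mathcal H_n$, where $\alpha$ (resp.\ $\beta$) is the label of the block of $V_{n-1}$ containing $\ol v$ (resp.\ $\wt v$) and $a$ stands for the aperiodic block. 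Because Lemma \ref{minimal_components} forces $\sigma_{n-1}$ to preserve minimal components, and the zero entries in the block form of $F_n$ prevent any vertex of $V_n^{(i)}$ from having an extremal edge sourced outside $V_{n-1}^{(i)}$, the pure types $(i,i)$ give loops at $i$ in $G_n$ contributed by component vertices, while every cross-edge of $G_n$ must be contributed by an aperiodic vertex. A direct check shows that $\mathcal H_n$ is the line digraph of $G_n$, so weak connectivity of $\mathcal H_n$ is equivalent to weak connectivity of $G_n$. Since there are only $d$ aperiodic vertices in $V_n$, the graph $G_n$ has at most $d$ cross-edges; connecting the $k$ component-loops requires a spanning tree on $\{1,\ldots,k\}$ and so at least $k-1$ cross-edges (routing through $a$ as a Steiner point would require even more). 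The hypothesis $d\le k-1$ therefore forces $d=k-1$, with every aperiodic vertex realising a distinct cross-type in $\{1,\ldots,k\}$ and the resulting $k-1$ cross-edges forming a spanning tree $T_n$ of $\{1,\ldots,k\}$; if $d<k-1$ we already contradict the weak connectivity of $\mathcal H_n$.

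The heart of the argument is to run the balance relation (\ref{general equal cardinalities one a}) for $u\in V_{n+1}^{(a)}$ and $\ol v\in \ol V_{n-1}^{(j)}$, first with $j$ a leaf of $T_n$ and then with $j$ internal. When $j$ is a leaf, exactly one aperiodic vertex of $V_n^{(a)}$ lies in $W'_{\ol v}(n)$ (the one whose cross-type has $\alpha=j$) and none lies in $W_{\wt v}(n)$; after the component-$j$ contributions on both sides of the balance cancel up to the $\pm 1$ corrections built into $\wt f$ and $\ol f$, what remains reduces to $c_{n,j',\ast}=1$ for a specific entry of $C_n$. When $j$ is internal, both $W'_{\ol v}(n)$ and $W_{\wt v}(n)$ contain at least one aperiodic vertex (because $j$ has tree-degree $\ge 2$), and comparing the two sides yields a relation of the form $c_{n,j',a'}-c_{n,j',b'}=\pm 1$ between two entries of $C_n$; combining this with the leaf identity above forces one of these entries to vanish, contradicting the strict positivity of $C_n$ required by Definition \ref{aperiodic_definition}. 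Since every tree on $k\ge 3$ vertices has at least one internal vertex, this rules out perfect orders whenever $k>2$ and $d\le k-1$.

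When $k=2$, the tree $T_n$ consists of a single edge whose endpoints are both leaves, so no internal-vertex contradiction arises; only the leaf computation applies, and it gives $c_n=1$ where $C_n=(c_n)$. Since this identity must hold at every sufficiently large level $n$ of the well-telescoped diagram, we obtain $C_n=(1)$ for all but finitely many $n$, as asserted. The main technical difficulty throughout is the careful bookkeeping of which aperiodic vertices of $V_n^{(a)}$ appear in $W_{\wt v}(n)$ versus $W'_{\ol v}(n)$ under each admissible cross-type configuration of $T_n$, since it is precisely this bookkeeping that propagates the entries of $C_n$ into the balance relation and produces the arithmetic conclusions driving the whole argument.
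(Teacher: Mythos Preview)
Your approach differs substantially from the paper's. The paper argues directly from weak connectivity: once the $k$ blocks $T_n^{(i)}$ are identified and shown to have no edges between them, it observes that (for $k>2$) some component $T_n^{(1)}$ has no incoming bridge, picks an aperiodic vertex $v$ whose minimal edge is sourced in a different component $p\neq 1$, and gets a contradiction because the strictly positive $v$-row forces $w(v,n-1,n)$ to visit $V_{n-1}^{(1)}$ while no path in the associated graph can reach $T^{(1)}$ from $T^{(p)}$. Your route through balance relations is a genuinely different idea, and for $k=2$ it works: summing (\ref{general equal cardinalities one a}) over all $\ol v$ in one component and cancelling the component-block contributions does force the single $C_n$-entry to equal $1$.

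However, your $k>2$ argument has a real gap. You assert that at an internal vertex $j$ of the tree $T_n$ the balance relation yields $c_{n,j',a'}-c_{n,j',b'}=\pm 1$, and then combine this with a leaf identity to force a zero entry of $C_n$. But the difference you obtain is of the form
\[
c_{n,j',a'}-c_{n,j',b'}=\bigl[s(\wt e_{u})\in V_n^{(j)}\cup\{v_{a'}\}\bigr]-\bigl[s(\ol e_{u})\in V_n^{(j)}\cup\{v_{b'}\}\bigr],
\]
which lies in $\{-1,0,1\}$, not in $\{-1,1\}$. Concretely, take $k=3$ with bridge types $(1,2)$ and $(2,3)$: the leaf relations at $j=1$ and $j=3$ force $s(\ol e_{u})\in V_n^{(1)}\cup\{v_1\}$ and $s(\wt e_{u})\in V_n^{(3)}\cup\{v_2\}$ for every aperiodic $u\in V_{n+1}$, and all $C_n$-entries equal $1$. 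Plugging these constraints into the internal relation at $j=2$ gives $0=0$, not $\pm 1$; no entry of $C_n$ vanishes and no contradiction arises from the balance relations alone. What actually goes wrong in this example is that the constraints you derived force every bridge at level $n+1$ to have type avoiding component $2$, so $T_{n+1}^{(2)}$ becomes disconnected --- but that is a weak-connectivity argument at the \emph{next} level, not the balance-relation argument you described. So either the claimed $\pm 1$ needs a justification you have not supplied, or the contradiction must be located elsewhere. (A smaller issue: $\mathcal H_n$ is not literally the line digraph of your $G_n$, since several $[\ol v,\wt v,n]$ can share a type; you are implicitly passing to a quotient, which is fine for connectivity but should be said.)
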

\begin{proof}
We first claim that in $\mathcal H_n$, there are
$k$ connected components of vertices $T_{n}^{(1)},
\ldots T_{n}^{(k)}$, such that there are no edges from vertices in $T_{n}^{(i)}$
to vertices in $T_{n}^{(j)}$ if $i\neq j$. To see this, if $1\leq i \leq k$, let
$T_n^{(i)} = \{[\ol v, \wt v,n]: \ol v \in V_n^{(i)}, \wt v\in
V_n^{(i)}\}$.
By Lemma \ref{minimal_components}, for large $n$,  if $\wt v \in \wt V_n^{(i)}$ it is not possible that $\ol v\in \sigma_n(\wt v)$ if $\ol v \not\in \ol V_n^{(i)}$.

If $\om$ is a perfect order on $B$,  we assume that    $(B, \om)$ is well telescoped and  has skeleton $\mathcal F_\om$. (Otherwise we work with the diagram $B'$ on which $L(\om)$ is well telescoped: Note that if $B$ has
incidence matrices of the given form, then so does any
telescoping.)
By Proposition \ref{connectedness and goodness}, the graphs $\mathcal H_n$ are weakly connected. The only way that this can happen is if there are $k-1$ remaining vertices  in $\mathcal H_n$ (so that $d$ must equal $k-1$),
each have one incoming edge from one of the components  $T_{n}^{(i)}$, and one outgoing edge into one of the components $T_{n}^{(j)}$.   Each of these remaining vertices in $\mathcal H_n$ corresponds to exactly one vertex in $V_n\backslash \bigcup_{i=1}^{k}V_n^{(i)}$.
Thus at least one of the components, say
$T_{n}^{(1)}$, has no incoming edges. Take now a vertex $t$, not belonging to any of the $T_n^{(j)}$'s, such that if $v\in t$, then $s(\ol e_v)  \in T_n^{(p)}$, where $p\neq 1$.  Since the row in $F_n$ corresponding to $v$ is strictly positive, we have a contradiction as there is no path from $T_n^{(p)}$ to $T_n^{(1)}$.
\end{proof}

\section{The infinitesimal subgroup of diagrams that support perfect orders}

We will use our results from Section \ref{characterization} to give an alternative proof of the  following result that was proved in \cite{gps} (Corollary 2). We recall that one can associate the so called {\em dimension group} with each simple Bratteli  diagram $B =(V,E)$. Let $(F_n)$ be the sequence of incidence matrices of $B$, then the dimension group $G$ is defined as the inductive limit:
$$
G := \varinjlim_{n\to \infty} \mathbb Z^{|V_n|} \stackrel{F_n}\longrightarrow \mathbb Z^{|V_{n+1}|}.
$$ For all information on dimension groups, we refer the reader to \cite{effros}.

If $G$ is a simple dimension group, the elements  $g$ of $G$  of the infinitesimal subgroup $Inf(G)$ are defined by the relation: $\tau(g) =0$ for every normalized  trace $\tau$ on $G$. Any normalized trace is defined by a probability measure invariant with respect to the tail equivalence relation.
If $B'$ is a telescoping of $B$, then their dimension groups $G$, $G'$ are  group order  isomorphic, and this isomorphism maps $Inf(G)$ onto $Inf(G')$.

\begin{theorem}\label{general-finite}
Let $B$ be a simple Bratteli diagram and $G$ its dimension group. Suppose $\om \in \mathcal P_{B}\cap \mathcal O_{B}(j)$, that is there is a perfect order $\om$ on $B$ with exactly $j$ maximal paths and $j$ minimal paths where $j\geq 2$.  Then  $Inf(G)$, the infinitesimal subgroup of $G$, contains a subgroup isomorphic to  $\mathbb Z^{j-1}$.
\end{theorem}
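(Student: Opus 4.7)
My plan is to construct $j-1$ explicit linearly independent infinitesimals of $G$ using the balance relations supplied by the perfect order. First I will telescope $B$, as permitted by Corollary~\ref{necessary condition}, so that the maximal paths $M_1,\ldots,M_j$ and minimal paths $m_1,\ldots,m_j$ are vertical through $\wt v_1,\ldots,\wt v_j$ and $\ol v_1,\ldots,\ol v_j$ respectively, the accompanying map is a bijection $\sigma:\wt V\to\ol V$ with $\sigma(\wt v_i)=\ol v_{\pi(i)}$ for a permutation $\pi$ of $\{1,\ldots,j\}$, and the balance relations
\[
\sum_{w\in W_{\wt v_i}(n)}\wt f_{u,w}^{(n)}=\sum_{w'\in W'_{\sigma(\wt v_i)}(n)}\ol f_{u,w'}^{(n)}\qquad (u\in V_{n+1})
\]
hold for each $i$. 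For $i=1,\ldots,j$ I will then set $\eta_i^{(n)}:=\chi_{W_{\wt v_i}(n)}-\chi_{W'_{\sigma(\wt v_i)}(n)}\in\mathbb Z^{|V_n|}$. A direct expansion of the balance relations, using $\wt f^{(n)}_{u,w}=f^{(n)}_{u,w}-\mathbf{1}[w=s(\wt e_u)]$ and the verticality identity $s(\wt e_u)=\wt v_i\iff u\in W_{\wt v_i}(n+1)$ (and the analogous facts for $\ol e$), yields $F_n\eta_i^{(n)}=\eta_i^{(n+1)}$, so each sequence defines a bona fide element $\eta_i\in G$.

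Next I will check that every $\eta_i$ is infinitesimal. A trace $\tau$ on $G$ is realised by a compatible sequence $p^{(n)}$ with $F_n^T p^{(n+1)}=p^{(n)}$, so that $\tau(\eta_i)=p^{(n)}\cdot\eta_i^{(n)}$ is independent of $n$. Since $\|\eta_i^{(n)}\|_\infty\leq 1$,
\[
|\tau(\eta_i)|\leq \sum_{w\in V_n} p^{(n)}_w = \mu\Bigl(\bigsqcup_{w\in V_n} U(\ol e(v_0,w))\Bigr),
\]
where $\mu$ is the $\varphi_\omega$-invariant probability measure corresponding to $\tau$. These clopen sets are the bases of the level-$n$ Kakutani--Rokhlin tower partition; they form a nested family decreasing to the finite set $X_{\min}(\omega)$. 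Since $B$ is aperiodic, $\mu$ has no atoms on $\varphi_\omega$-orbits, so $\mu(X_{\min}(\omega))=0$ and $\sum_w p^{(n)}_w\to 0$. As $\tau(\eta_i)$ is constant in $n$ it must vanish, giving $\eta_i\in \mathrm{Inf}(G)$.

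Finally I will verify that $\eta_1,\ldots,\eta_{j-1}$ are $\mathbb Z$-independent. Because $\{W_{\wt v_i}(n)\}_i$ and $\{W'_{\ol v_k}(n)\}_k$ are partitions of $V_n$, $\sum_{i=1}^j\eta_i^{(n)}=0$, so $\sum_i\eta_i=0$ in $G$; I will argue this is the only $\mathbb Z$-relation. Given $\sum_i c_i\eta_i=0$, realising the equality at level $n$ and reading it off at an arbitrary $w\in V_n$ gives $c_{i_w}=c_{\pi^{-1}(k_w)}$, where $i_w,k_w$ are the indices with $w\in[\ol v_{k_w},\wt v_{i_w},n]$. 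This is precisely the assertion that $c$ is constant along the edges of the undirected graph $\mathcal G_n$ on $\{1,\ldots,j\}$ whose edges $\{i,l\}$ encode existing vertices $[\ol v_{\pi(l)},\wt v_i,n]$ of $\mathcal H_n$. The directed edge structure of $\mathcal H_n$ (an edge leaves $[\ol v_{\pi(l)},\wt v_i]$ only to some $[\ol v_{\pi(i)},\wt v_{i'}]$) exhibits $\mathcal H_n$ as an orientation of a subgraph of the line graph of $\mathcal G_n$; so strong connectedness of $\mathcal H_n$, which holds by Proposition~\ref{connectedness and goodness} since $B$ is simple, forces $\mathcal G_n$ itself to be connected. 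Hence all the $c_i$ are equal, the only relation is $\sum_i\eta_i=0$, and $\eta_1,\ldots,\eta_{j-1}$ generate a subgroup of $\mathrm{Inf}(G)$ isomorphic to $\mathbb Z^{j-1}$. I expect the main technical obstacle to be this final independence argument, in particular the line-graph translation relating strong connectedness of $\mathcal H_n$ to connectedness of the auxiliary graph $\mathcal G_n$; the remaining steps are essentially bookkeeping with the balance relations plus a standard measure-theoretic estimate on Kakutani--Rokhlin bases.
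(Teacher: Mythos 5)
Your proposal is correct and takes essentially the same route as the paper: the same $\pm1/0$ vectors extracted from the balance relations of Corollary~\ref{necessary condition}, the same compatibility identity $F_n\eta_i^{(n)}=\eta_i^{(n+1)}$, pairing against the trace vectors $p^{(n)}$, and strong connectivity of $\mathcal H_n$ (Proposition~\ref{connectedness and goodness}) to get rank $j-1$; your ``vertical'' framing is literally available only in finite rank, but since you only use the level-wise data (eventually $j$ maximal and $j$ minimal vertices per level, point maps $\sigma_{n-1}$, a level-independent permutation $\pi$), this is a harmless relabelling rather than a gap. Your two local variants --- bounding $|\tau(\eta_i)|\le\sum_w p_w^{(n)}\to 0$ via the nested unions of minimal cylinders shrinking to the finite set $X_{\min}(\om)$, where the paper instead gets exact cancellation $\langle p^{(n+1)},\e^{(n+1)}_{\wt v}\rangle=0$ from $\varphi_\om$-invariance at a fixed level, and the auxiliary graph $\mathcal G_n$/line-graph packaging of the independence step, where the paper chases loop-free paths directly in $\mathcal H_n$ --- are both sound, and the latter handles looped vertices of $\mathcal H_n$ somewhat more cleanly than the paper's argument.
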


\begin{proof}
Given a simple Bratteli diagram $B$ and $\om \in \mathcal P_{B}\cap \mathcal O_{B}(j)$, we assume that $(B,\om)$ is well telescoped. For if not, we will telescope $(B,\om)$ to $(B',\om')$, and working with $(B',\om')$, we shall show that if $G'$ is the dimension group defined by $B'$, then $Inf(G')$ contains a subgroup isomorphic to  $\mathbb Z^{j-1}$. Since $G$, $G'$ are order isomorphic groups with the isomorphism mapping $Inf(G)$ to $Inf(G')$, then $Inf(G)$ will also contain a subgroup isomorphic to $\mathbb Z^{j-1}$.

(1) In the proof, we will use the notation defined in Sections \ref{skeletons} and \ref{characterization}.
This means that we will freely operate with such objects as the skeleton $\mathcal F_\om$, correspondence $\sigma = (\sigma_n)$,  sets of maximal and minimal vertices $\wt V_n, \ol V_n$, partitions $W(n), W'(n)$, sets $\wt E(W_{\wt v}(n), u), \ol E(W'_{\ol v}(n), u)$, maximal and minimal finite paths $\wt e_v, \ol e_v, \wt  e(V_n, u), \ol e(V_n, u)$ (defined just before Lemma \ref{successor}), etc.

Fix a level $n$ such that $n> n_0$ where $n_0$ is as in the first statement of Corollary \ref{necessary condition}, so that $|\wt V_i| = |\ol V_i| = j$ for all $i \geq n-1$, and take a maximal vertex $\wt v^*\in \wt V_{n-1}$. We construct a sequence of vectors $(\e_{\wt v^*}^{(n+k)})_{k\geq 1}$ with $ \e_{\wt v^*}^{(n+k)} \in \mathbb Z^{|V_{n+k}|}$ as follows. Take first a vertex $v \in V_{n+1}$ and set
\begin{equation}\label{balance3}
\varepsilon_{\wt v*}^{(n+1)}(v):= \left\{
\begin{array}{cc}
- 1
& \mbox{if }   s(\ol e_v) \in W'_{\sigma_{n-1}(\wt v^*)} (n), s(\wt e_v) \notin W_{\wt v^*} (n),
\\
1
& \mbox{if } s(\ol e_v) \notin W'_{\sigma_{n-1}(\wt v^*)} (n), s(\wt e_v) \in W_{\wt v^*} (n),
 \\
0 & \mbox{ otherwise }
  \end{array}
  \right.
\end{equation}
to obtain the $v$-th entry of $\e_{\wt v^*}^{(n+1)}$. In general, let $v$ be any vertex from $V_{n+k}$. Then we define $\e_{\wt v^*}^{(n+k)}$ as follows:
\begin{equation}\label{balance4new}
\varepsilon_{\wt v*}^{(n+k)}(v):= \left\{
\begin{array}{cc}
- 1
& \mbox{if }   s(\ol e(V_n, v)) \in W'_{\sigma_{n-1}(\wt v^*)} (n), s(\wt e(V_n, v)) \notin W_{\wt v^*} (n),
\\
1
& \mbox{if } s(\ol e(V_n, v)) \notin W'_{\sigma_{n-1}(\wt v^*)} (n), s(\wt e(V_n, v)) \in W_{\wt v^*} (n),
 \\
0 & \mbox{ otherwise. }
  \end{array}
  \right.
\end{equation}

(2) We will show that for any $k \geq 1$
\begin{equation}\label{F epsilon}
F_{n+k} \varepsilon_{\wt v*}^{(n+k)}= \e_{\wt v*}^{(n+k+1)}.
\end{equation}
To prove (\ref{F epsilon}), we use another representation of entries of the vector
$\varepsilon_{\wt v*}^{(n+k)}(v)$. Indeed, since $\om$ is perfect we have that  relation (\ref{sums of entries}) in Section \ref{characterization} holds.
Also, if $F(n, n+k) = F_{n+k-1}\circ \cdots \circ F_n, k\geq 1$, and
 $u\in V_{n+k}$,  then relation   (\ref{sums of entries}) becomes
\[
\sum_{w \in W_{\wt v}(n) }\wt f_{u,w}^{(n, n+k)} = \sum_{w'\in W'_{\sigma_{n-1}(\wt v)}(n)} \ol f _{u, w'}^{(n, n+k)}, \ \ \ \ u \in V_{n+k}\, .
\]
It is straightforward to check that
\begin{equation}\label{vector epsilion}
\varepsilon_{\wt v*}^{(n+k)}(v) = \sum_{w \in W_{\wt v^*}(n)}  f^{(n, n+k)}_{v,w} -
\sum_{w' \in W'_{\sigma_{n-1}(\wt v^*)}(n)}  f^{(n, n+k)}_{v,w'},\ \ v \in V_{n+k}.
\end{equation}
Then (\ref{F epsilon}) can be proved by induction. Indeed, compute for $k=1$
\begin{eqnarray*}
F_{n+1}\varepsilon_{\wt v*}^{(n+1)}(v) & = & \sum_{u\in V_{n+1}} f^{(n+1)}_{v, u}
\left( \sum_{w \in W_{\wt v^*}(n)}  f^{(n)}_{u,w} -
\sum_{w' \in W'_{\sigma_{n-1}(\wt v^*)}(n)}  f^{(n)}_{u,w'}\right) \\
   &=&  \sum_{w \in W_{\wt v^*}(n)}  f^{(n, n+2)}_{v,w} -
\sum_{w' \in W'_{\sigma_{n-1}(\wt v^*)}(n)}  f^{(n, n+2)}_{v,w'}\\
& = & \varepsilon_{\wt v*}^{(n+2)}(v),  \ \ v \in V_{n+2}.
\end{eqnarray*}
The induction step can be computed in a similar way. We omit the details.

(3) It follows from relation (\ref{F epsilon}) that every vector $\varepsilon_{\wt v^*}^{(n+1)}, \wt v^* \in \wt V_{n-1},$ generates the  element
$g_{\wt v^*}=(\varepsilon_{\wt v^*}^{(n+1)}, \varepsilon_{\wt v^*}^{(n+2)}, \varepsilon_{\wt v^*}^{(n+3)}, \ldots  )$ of the dimension group $G$ of $B$.

Next,  since $W(n)$ and $W'(n)$ constitute partitions of $V_n$, we have
\begin{eqnarray*}
\sum_{\wt v\in \wt V_{n-1}} \e^{(n+k)}_{\wt v}(v)  &=& \sum_{\wt v\in \wt V_{n-1}}\left( \sum_{w \in W_{\wt v^*}(n)}  f^{(n, n+k)}_{v,w} -
\sum_{w' \in W'_{\sigma_{n-1}(\wt v^*)}(n)}  f^{(n, n+k)}_{v,w'}\right) \\
& =& \sum_{w \in V_n}  f^{(n, n+k)}_{v,w} - \sum_{w' \in V_n}  f^{(n, n+k)}_{v,w'} \\
& = & 0
\end{eqnarray*}
for any $k$ and $v$. That is the vectors $\{\e^{(n+k)}_{\wt v} : \wt v \in \wt V_{n-1}\}$ are linearly dependent.

On the other hand, {\em we claim that any subset of this set containing $j-1$  vectors is linearly independent over $\Z$}. To simplify our notation we consider the set of vectors $\{\e^{(n+1)}_{\wt v} : \wt v \in \wt V_{n-1}\}$ only, since the case for the vectors $\{\e^{(n+k)}_{\wt v} : \wt v \in \wt V_{n-1}\}$ where   $k>1$ is considered similarly. Suppose that $\wt V_{n-1} = \{ \wt v_1, \ldots, \wt v_j  \}.$
 Consider the  $|V_{n+1}| \times j$-matrix  whose $i$-th column is the vector
$\e^{(n+1)}_{\wt v_i} $.  We fix a vertex $u\in V_{n+1}$ and look at the $u$-th row   $R_{u}$  of this matrix; this is  the row formed by the $u$-th entries of the vectors $\{\e^{(n+1)}_{\wt v_i}\}, 1 \leq i \leq j$.  The definition of  the entries of the row $R_{u}$  (see (\ref{balance3})) shows that either they  are all  0
or each value 1 and $-1$ is taken exactly once, and all remaining entries in this row are zero. Note that since this property holds for any vertex $u$, we have another proof of the fact  that the sum of all vectors $\{\e^{(n+1)}_{\wt v} \}$ is zero.
Also, for every $\wt v$,  the strong connectivity of the graph $\mathcal H_n$ implies that  we have  $|\{u \in V_{n+1} : \e_{u,\wt v}^{(n+1)} =1 \}| \geq 1$ and
 $|\{u \in V_{n+1} : \e_{u,\wt v}^{(n+1)}    = -1 \}| \geq 1$ (but these sets may be of different cardinalities).

We pick any vector $\e_{\wt v}^{(n+1)}$ in the set  $\{\e^{(n+1)}_{\wt v_i}\}$ and show that the set  of remaining vectors $\{\e^{(n+1)}_{\wt v_i}\} \setminus \e_{\wt v}^{(n+1)}$  is linearly independent. Indeed, assume that
\begin{equation}\label{linear independence}
\sum_{\wt v_i \neq \wt v} m_{\wt v_i} \e^{(n+1)}_{\wt v_i} = 0;
 \end{equation}
 we shall show that all $m_{v_i}$'s in
  (\ref{linear independence}) must be equal to $m$. If one assumes that $m \neq 0$ we obtain two contradictory equalities $\sum_{\wt v \neq \wt v_0} \e^{(n+1)}_{\wt v} =0$ and $\sum_{\wt v \neq \wt v_0} \e^{(n+1)}_{\wt v} = - \e^{(n+1)}_{\wt v_0}$.  Pick any two maximal vertices $\wt v_p$ and $\wt v_t$ that are not  equal to $\wt v$. Firstly, by the strong connectivity of $\mathcal H_n$, there exist vertices $[\ol v_p, \wt v_p]$ and $[\ol v_t, \wt v_t]$ in $\mathcal H_n$ that do not have loops. Secondly, also by the strong connectivity of $\mathcal H_n$, we can find a path from $[\ol v_p, \wt v_p]$ to $[\ol v_t, \wt v_t]$ , and we can also assume that  for any vertex along this path, there are no loops (otherwise these vertices can be removed from the path, and we still have a valid path). Finally, note that if there is an edge from $[\ol v', \wt v']$ to $[\ol v^*, \wt v^*]$ and there is no loop at
$[\ol v^*, \wt v^*]$, then $m_{\wt v^*}=m_{\wt v'}$. The result follows.

(4) It remains to show that the elements $g_{\wt v}, \wt v\in \wt V_{n-1},$ belong to   the infinitesimal subgroup $Inf(G)$. Since $G$ is a simple dimension group, it suffices to check that $\tau(g_{\wt v}) = 0$ for any trace $\tau$ on $G$ or, equivalently, for any invariant measure $\mu$.

Fix a probability $\varphi_\om$-invariant measure $\mu$ on the path space of the diagram $B$. Consider the vector $ p^{(n)}= (p_v^{(n)}) \in \mathbb R^{|V_n|}$ whose entries are $\mu$-measures of a finite path (cylinder set) with source $v_0$ and range  $v$. This means that, in particular, $\mu(\wt e(v_0,w))  =p_w^{(n)}, w \in W_{\wt v}(n)$ and $\mu(\ol e (v_0, w')) =p_{w'}^{(n)}, w' \in W_{\sigma_{n-1}(\wt v)}(n)$. As noticed in \cite{bkms}, we have $F_n^T p^{(n+1)} = p^{(n)}, \forall n\in \N,$
or
$$
\sum_{v\in V_{n+1}} f_{v,w}^{(n)} p_v^{(n+1)} = p^{(n)}_w, \  \ w\in V_n.
$$

Compute $<p^{(n+1)}, \e ^{(n+1)}_{\wt v}>$ where $<\cdot, \cdot>$ is the inner product:
\begin{eqnarray*}
<p^{(n+1)}, \e ^{(n+1)}_{\wt v}> &= &<F_{n+1}^Tp^{(n+2)}, \e ^{(n+1)}_{\wt v}>\\
& = & <p^{(n+2)}, F_{n+1}\e ^{(n+1)}_{\wt v}>\\
&=& <p^{(n+2)}, \e ^{(n+2)}_{\wt v}> \\
& \cdots & \\
& = & <p^{(n+j)}, \e ^{(n+j)}_{\wt v}>
\end{eqnarray*}
for any $j$. On the other hand,
\begin{eqnarray*}
  <p^{(n+1)}, \e ^{(n+1)}_{\wt v}> &=& \sum_{v\in V_{n+1}}  p_v^{(n+1)} \left(\sum_{w\in W_{\wt v}(n)}f_{v,w}^{(n)} - \sum_{w'\in W'_{\sigma_{n-1}(\wt v)}(n)} f_{v,w'}^{(n)}\right) \\
  &= &\sum_{w\in W_{\wt v}(n)}\sum_{v\in V_{n+1}} f_{v,w}^{(n)} p_v^{(n+1)} -  \sum_{w'\in W'_{\sigma_{n-1}(\wt v)}(n)}\sum_{v\in V_{n+1}} f_{v,w'}^{(n)} p_v^{(n+1)}\\
   &=& \sum_{w\in W_{\wt v}(n)} p_w^{(n)} -  \sum_{w'\in W'_{\sigma_{n-1}(\wt v)}(n)} p_{w'}^{(n)}\\
&=& 0
\end{eqnarray*}
because
\begin{eqnarray*}
\sum_{w\in W_{\wt v}(n)} p_w^{(n)} &=& \mu(\bigcup_{w\in W_{\wt v}(n)}  \wt e (v_0, w))\\
& = & \mu(\varphi_\om(\bigcup_{w\in W_{\wt v}(n)}  \wt e (v_0, w)))\\
& = & \mu(\bigcup_{w'\in W'_{\sigma_{n-1}(\wt v)}(n)}  \ol e (v_0, w'))\\
& = & \sum_{w'\in W'_{\sigma_{n-1}(\wt v)}(n)} p_{w'}^{(n)}.
\end{eqnarray*}
This proves that $g_{\wt v}\in Inf(G)$.
\end{proof}

\begin{remark}
In the simpler case when $B$ is of finite rank,  then for each $n$, $\varepsilon_{\wt v*}^{(n)}= \varepsilon_{\wt v*}$, each of the latter $j$ vectors correspond to an infinitesimal,  and
$\varepsilon_{\wt v*}=-\sum_{\wt v \neq \wt v*} \varepsilon_{\wt v}$, while  $\{\varepsilon_{\wt v}: \wt v \neq \wt v*\}$ is a linearly independent set, so that there are $j-1$ identified copies of $\mathbb Z$ in the infinitesimal subgroup of $dim(B)$.
\end{remark}

\begin{example}
Let
\begin{equation}
F_n = \left(
      \begin{array}{cccc}
        f_{aa}^{(n)} & f_{ab}^{(n)}& \alpha^{(n)}& \alpha^{(n)}\\
        f_{ba} ^{(n)}& f_{bb}^{(n)} &  \beta^{(n)} & \beta^{(n)} \\
        f_{ca} ^{(n)}& f_{cb}^{(n)}& \gamma^{(n)}+1& \gamma^{(n)}\\
        f_{da}^{(n)}& f_{db}^{(n)}& \delta^{(n)}&  \delta^{(n)}+1\\
      \end{array}
    \right);
\end{equation}
then there exist orders $\om$ on $B$ that belong to
$ \mathcal P_{B}\cap \mathcal O_{B}(2)$, and such that the associated graph and correspondence is as in Figure \ref{first graph}, where $a \in [a,a],$ $ b \in  [b,b] $, $c \in [a,b]$ and $d \in [b,a]$. In this case,

\begin{equation}
\varepsilon_a= \left(
      \begin{array}{c}
        0\\
        0\\
        -1\\
        1\\
      \end{array}
    \right) \mbox{ and }
   \varepsilon_b= \left(
      \begin{array}{cccc}
        0\\
        0\\
        1\\
        -1\\
      \end{array}
    \right);
\end{equation}
so that $F_n \varepsilon_a = \varepsilon_a$ as claimed and  $\varepsilon_a$ corresponds to an element of $Inf(G)$.
\end{example}

\begin{example}
One can vary the given skeleton and correspondence in Theorem \ref{general-finite} to maximize the number of copies of $\mathbb Z$ that one can find in $Inf(G)$.
For example,
 if a finite rank simple diagram $B$  supports orders in  $\om \in \mathcal P_{B}(2)$ that can have either of the two possible graphs described in Example \ref{graph for two extremal paths},  this means that there is a copy of $\mathbb Z\times \mathbb Z$ in $Inf(G)$.  The incidence matrices of this diagram must have a very restrictive structure. For example, if $B$ has rank 4, then the  incidence matrices must be of the form

\begin{equation}
F_n = \left(
      \begin{array}{cccc}
        a_{n}^{(1)}+1 & a_{n}^{(1)} & a_{n}^{(2)}& a_{n}^{(2)}\\
        b_{n}^{(1)} & b_{n}^{(1)}+1  &  b_{n}^{(2)} & b_{n}^{(2)} \\
        c_{n}^{(1)} & c_{n}^{(1)} & c_{n}^{(2)}+1 & c_{n}^{(2)} \\
        d_{n}^{(1)}& d_{n}^{(1)} & d_{n}^{(2)} &  d_{n}^{(2)} +1 \\
      \end{array}
    \right)
\end{equation}
 Bratteli diagrams with these incidence matrices have 2 orders in $\mathcal P_{B}\cap \mathcal O_{B}(2)$, each with a different  associated  graph $\mathcal H$. This implies that they have (at least)  two independent infinitesimals, corresponding to the elements.

 \begin{equation}
\varepsilon= \left(
      \begin{array}{c}
        0\\
        0\\
        -1\\
        1\\
      \end{array}
    \right) \mbox{ and }
   \varepsilon'= \left(
      \begin{array}{cccc}
        1\\
        -1\\
        0\\
        0\\
      \end{array}
    \right).
\end{equation}

\end{example}

Next we extend Theorem \ref{general-finite} to diagrams supporting perfect orders  that belong to $\mathcal P_B^*$. This result overlaps Corollary 3 in \cite{gps}.

\begin{theorem}\label{general_P*}
Let $B$ be a simple Bratteli diagram and $G$ its dimension group. Suppose $\om$ belongs to  $ {\mathcal P_B^*} \backslash \bigcup_{j=1}^{\infty}\mathcal O_B(j)$. Then  $Inf(G)$, the infinitesimal subgroup of $G$, contains as a subgroup the free abelian group on countably many generators.
\end{theorem}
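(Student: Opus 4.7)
The approach follows the template of Theorem \ref{general-finite}: construct one infinitesimal $g_{M_i} \in Inf(G)$ for each maximal path in a countable family $\{M_i\}_{i \geq 1} \subseteq X_{\max}(\om)$, then prove $\Z$-linear independence of every finite subfamily. Since $\om \in \mathcal P_B^* \setminus \bigcup_j \mathcal O_B(j)$, such a countable family exists. For each $i$, the definition of $\mathcal P_B^*$ yields $n_i$ such that $\sigma_n(v_n(M_i))$ is a singleton for $n \geq n_i - 1$, and then Corollary \ref{necessary condition}(2) ensures the balance relation (\ref{sums of entries}) holds along $M_i$ for all $n \geq n_i$. Setting $\wt v_i^* := v_{n_i - 1}(M_i)$ and defining vectors $\e_{M_i}^{(n_i + k)}$ via (\ref{balance3}) and (\ref{balance4new}), the recurrence $F_{n_i + k} \e_{M_i}^{(n_i + k)} = \e_{M_i}^{(n_i + k + 1)}$ and the pairing identity $\langle p^{(n)}, \e_{M_i}^{(n)}\rangle = 0$ against any $\varphi_\om$-invariant probability vector $p^{(n)}$ transfer verbatim from parts (2) and (4) of Theorem \ref{general-finite}, since those computations only invoke the balance relation along the single path $M_i$. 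Thus each $g_{M_i} := (\e_{M_i}^{(n_i + k)})_{k \geq 1}$ defines an element of $Inf(G)$.

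To prove $\Z$-linear independence of any finite subfamily $g_{M_{i_1}}, \ldots, g_{M_{i_r}}$, I first exploit that $|\wt V_n| \to \infty$: since $X_{\max}(\om)$ is infinite and distinct maximal paths must pass through distinct vertices from some level onward by Definition \ref{skeleton}, for each $r$ there exists $N$ with $|\wt V_N| \geq r + 1$. Choose such an $N$ additionally satisfying $N \geq \max_k n_{i_k}$, the vertices $\wt u_k := v_N(M_{i_k})$ pairwise distinct, and $\ol w_k := v_N(\sigma(M_{i_k}))$ likewise pairwise distinct. Each $g_{M_{i_k}}$ can then be represented by the tail sequence starting at this common level. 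A hypothetical relation $\sum_{k=1}^r m_k g_{M_{i_k}} = 0$ becomes the vector identity $\sum_{k=1}^r m_k \e_{\wt u_k}^{(N'+1)} = 0$ in $\Z^{|V_{N'+1}|}$ at some deep enough level $N' \geq N$.

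I would then enlarge the sum over all of $\wt V_N = \{\wt u_1, \ldots, \wt u_s\}$ (with $s \geq r+1$) by setting $m_k := 0$ for $r < k \leq s$; the relation $\sum_{k=1}^s m_k \e_{\wt u_k}^{(N'+1)} = 0$ then has at least one trivially zero coefficient, say $m_s$. By further telescoping if needed so that $\sigma_N$ is a bijection between $\wt V_N$ and $\ol V_N$, the identity $\sum_{\wt v \in \wt V_N} \e_{\wt v}^{(N'+1)} = 0$ holds by the same computation as in part (3) of Theorem \ref{general-finite}. Taking $\wt u_s$ in place of the singled-out vertex $\wt v_0$ of that proof, strong connectivity of $\mathcal H_N$ (guaranteed by Proposition \ref{connectedness and goodness} since $B$ is simple) combined with the combinatorial traversal argument through non-looped vertices forces all $m_k$ with $k \neq s$ to equal a common value $m$, and the vanishing of $m_{r+1}, \ldots, m_{s-1}$ then forces $m = 0$, yielding $m_1 = \cdots = m_r = 0$.

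The main anticipated obstacle is arranging the bijection $\sigma_N : \wt V_N \to \ol V_N$ used in the sum-to-zero identity. In the finite rank case this was automatic after well-telescoping, since $|\wt V_n| = |\ol V_n|$ stabilized and $\sigma$ was prescribed as a bijection; in the countable setting the sets $\wt V_N, \ol V_N$ grow with $N$, and the equivalence classes on $X_{\max}(\om)$ and $X_{\min}(\om)$ induced by $v_N(\cdot)$ need not correspond under $\sigma$ at every level. However, since $\sigma$ is a homeomorphism and both partitions refine to the discrete partition as $N \to \infty$, the alignment needed at a single finite level $N$ can be achieved by a suitable telescoping that separates the finitely many paths corresponding to the relevant equivalence classes. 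Once this coordination between the per-path stabilization levels $n_i$ and the global alignment level $N$ is handled, the remainder of the argument is a direct transplant of the finite-rank reasoning in Theorem \ref{general-finite}.
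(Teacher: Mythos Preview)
Your construction of the infinitesimals $g_{M_i}$ (first paragraph) matches the paper's approach: the per-path stabilization coming from $\mathcal P_B^*$, the recurrence $F_{n_0+k}\e_{M}^{(n_0+k)}=\e_{M}^{(n_0+k+1)}$, and the pairing against invariant probability vectors all transfer verbatim from parts (2) and (4) of Theorem~\ref{general-finite}.

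The linear-independence argument, however, departs from the paper's and contains a real gap. Your plan is to enlarge the relation $\sum_{k=1}^{r}m_k\e_{\wt u_k}^{(N'+1)}=0$ to a sum over \emph{all} of $\wt V_N$, invoke the sum-to-zero identity, and reuse the ``all coefficients equal'' argument from part~(3) of Theorem~\ref{general-finite}. But both of those steps require defining $\e_{\wt v}^{(N'+1)}$ for every $\wt v\in\wt V_N$, which in turn needs $\sigma_N(\wt v)$ to be a singleton for \emph{every} maximal vertex at level $N$. The $\mathcal P_B^*$ hypothesis is only a per-path condition: it gives, for each $M$, a level beyond which $\sigma_n(v_n(M))$ is a singleton, but it does not produce a single level at which $\sigma_N$ is globally singleton-valued. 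Since $X_{\max}(\om)$ is infinite and compact, it has accumulation points, and your proposed workaround---that the $v_N$-partitions refine to the discrete partition---fails exactly at such points. Telescoping does not help either, because the map $\sigma_N$ is determined by the level-$N$ language and is unchanged at the retained levels.

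The paper sidesteps this by never enlarging the family. Working only with the $k$ chosen paths $M_1,\ldots,M_k$ and a level $N$ at which each $\sigma_N(\wt v_N^i)$ and each $\sigma_N^{-1}(\ol v_N^i)$ is a singleton (which \emph{is} achievable for finitely many paths), the paper observes that each row $R_u$ of the matrix $[\e_{M_1}^{(N+1)}\cdots\e_{M_k}^{(N+1)}]$ has at most one $+1$ and at most one $-1$. A nontrivial relation $\sum m_i\e_{M_i}^{(N+1)}=0$ with all $m_i\neq 0$ then forces each row to contain either both or neither. Consequently every vertex of $\mathcal H_{N+1}$ lies either in some $[\ol v_N^i,\wt v_N^j,N+1]$ with both indices among the chosen $k$, or in some $[\ol v,\wt v,N+1]$ with neither; since $|\wt V_N|>k$ both sets are nonempty, and the singleton property \emph{on the chosen indices alone} prevents edges between them, contradicting strong connectivity of $\mathcal H_{N+1}$ (Proposition~\ref{connectedness and goodness}). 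This argument never asks $\sigma_N$ to be a bijection on all of $\wt V_N$.
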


\begin{proof}
The proof is similar to that  of Theorem \ref{general-finite}.
  The second statement of   Corollary \ref{necessary condition} tells us that for each maximal path $M$, with $\wt v_n = v_n(M)$, there exists some level $n_0$ such that if $n\geq n_0 -1$, $\sigma_n (\wt v_n)$ is a singleton.
  We construct a sequence of vectors $(\e_{M}^{(n_0+k)})_{k\geq 1}$ with $ \e_{M}^{(n+k)} \in \mathbb Z^{|V_{n+k}|}$ as follows. Take first a vertex $v \in V_{n_0+1}$ and set
\[
\varepsilon_{M}^{(n_0+1)}(v):= \left\{
\begin{array}{cc}
- 1
& \mbox{if }   s(\ol e_v) \in W'_{\sigma_{n_0-1}(\wt v_{n_0-1})} (n_0), s(\wt e_v) \notin W_{\wt v_{n_0-1}} (n_0),
\\
1
& \mbox{if } s(\ol e_v) \notin W'_{\sigma_{n_0-1}(\wt v_{n_0-1})} (n_0), s(\wt e_v) \in W_{\wt v_{n_0-1}} (n_0),
 \\
0 & \mbox{ otherwise }
  \end{array}
  \right.
\]
to obtain the $v$-th entry of $\e_{M}^{(n_0+1)}$. In general, let $v$ be any vertex from $V_{n_0+k}$. Then we define $\e_{M}^{(n_0+k)}$ as follows:
\[
\varepsilon_{\wt v*}^{(n_0+k)}(v):= \left\{
\begin{array}{cc}
- 1
& \mbox{if }   s(\ol e(V_{n_0}, v)) \in W'_{\sigma_{n_0-1}(\wt v_{n_0-1})} (n_0), s(\wt e(V_{n_0}, v)) \notin W_{\wt v_{n_0-1}} (n_0),
\\
1
& \mbox{if } s(\ol e(V_{n_0}, v)) \notin W'_{\sigma_{n_0-1}(\wt v_{n_0-1})} (n_0), s(\wt e(V_{n_0}, v)) \in W_{\wt v_{n_0-1}} (n_0),
 \\
0 & \mbox{ otherwise. }
  \end{array}
  \right.
\]

As in (2) of Theorem \ref{general-finite}, we can show that
for any $k \geq 1$
\begin{equation}\label{FPepsilon}
F_{n_0+k} \varepsilon_{M}^{(n_0+k)}= \e_{M}^{(n_0+k+1)}.
\end{equation}
This means that  we can define, from relation (\ref{FPepsilon}), the  element
$g_{M}=(\varepsilon_{M}^{(n_0+1)}, \varepsilon_{M}^{(n_0+2)}, \varepsilon_{M}^{(n_0+3)}, \ldots  )$ of the dimension group $G$ of $B$. In this way,  we get a countably infinite collection of elements $\{g_M: M \mbox{ maximal}\}$.

The argument that the collection $\{g_M: M \mbox{ maximal}\}$ generates a free abelian group, as the case of (3) of  Theorem \ref{general-finite}, depends on the strong connectivity of the graphs $\mathcal H_n$. Take a finite set of maximal paths $(M_1,..., M_k)$ and suppose that there is a linear relation
\[
\sum_{i=1}^{k} m_{i} g_{M_i} = 0,
 \]
 where the $m_i$,s are nonzero. Let $\wt v_n^{i}= \wt v_n(M_i)$ and $\ol v_n^{i}= \ol v_n(M_i)$,  and choose an $N$ large enough so that $\sigma_{n}( \wt v_n^{i})$ is a singleton for each  $n\geq N-1$ and $1\leq i \leq k$. Consider the  $|V_{N+1}| \times k$-matrix  whose $i$-th column is the vector
$\e^{(N+1)}_{M_i} $.  We fix a vertex $u\in V_{N+1}$ and look at the $u$-th row   $R_{u}$  of this matrix; this is  the row formed by the $u$-th entries of the vectors $\{\e^{(n+1)}_{M_i}\}, 1 \leq i \leq k$.  The definition of  the entries of the row $R_{u}$   shows that  apart from at most one occurrence  of 1 and of $-1$,    they are all 0. Note that unlike the case in the proof of (3) of Theorem \ref{general-finite}, it is possible that only one of the values $1$, $-1$ appear in any row $R_u$.
Also, for every $\wt v$,  the strong connectivity of the graph $\mathcal H_n$ implies that  we have  $|\{u \in V_{n+1} : \e_{u,\wt v}^{(n+1)} =1 \}| \geq 1$ and
 $|\{u \in V_{n+1} : \e_{u,\wt v}^{(n+1)}    = -1 \}| \geq 1$.
The linear relation implies that $\sum_{i=1}^{k} m_{i} \e^{(N+1)}_{M_i} = 0. $ It follows that if a 1 occurs in the row $R_{u}$   a $-1$ must also occur; i.e. if $1$ occurs in $R_u$, then $u \in [\ol v_N^{i}, \wt v_N^{j}] $ for some $1\leq i,\, j \leq k$. Otherwise - if $R_u$ consists only of zeros - $u \in [\ol v, \wt v] $ with $\ol v \not \in \{ \ol v_N^1, \ldots ,\ol v_N^k\}$ and $\wt v \not \in \{ \wt v_N^1, \ldots ,\wt v_N^k\}$.  Thus, we have partitioned $\mathcal H_{N+1}$ into two disconnected sets of vertices, contradicting its strong connectivity.

The proof that each $g_M$ is an infinitesimal is now very similar to  part (4) of the proof of Theorem \ref{general-finite}.

\end{proof}

\proof[Acknowledgements]
The authors would like to thank Christian Skau for some helpful comments. Part of this work was completed while the authors were visiting  The University of Iowa (S.B.)   and The Universit\'{e} de Picardie Jules Verne (R.Y.). We are thankful to  these universities for their hospitality and support.

{\footnotesize
\bibliographystyle{alpha}
\bibliography{bibliography}
}

\end{document}